\title[Lower bounds for canonical heights on Drinfeld modules]{A lower bound for the canonical height associated to a Drinfeld module}\author{Patrick Ingram}
\email{pingram@math.colostate.edu}
\address{Department of Mathematics, Colorado State University, Fort Collins, USA}
\newcommand{\QQ}{\mathbb{Q}}
\newcommand{\ZZ}{\mathbb{Z}}
\newcommand{\CC}{\mathbb{C}}
\newcommand{\RR}{\mathbb{R}}
\newcommand{\FF}{\mathbb{F}}
\newcommand{\PP}{\mathbb{P}}
\renewcommand{\AA}{\mathbb{A}}
\newcommand{\Ocal}{\mathcal{O}}
\newcommand{\Gal}{\operatorname{Gal}}
\newcommand{\Spec}{\operatorname{Spec}}
\newcommand{\Div}{\operatorname{Div}}
\newcommand{\lcm}{\operatorname{lcm}}
\newcommand{\Pic}{\operatorname{Pic}}
\newcommand{\Disc}{\mathscr{D}}
\newcommand{\Aberk}[1]{\mathbb{A}^1_{\mathrm{Berk}}}
\newcommand{\Pberk}[1]{\mathbb{P}^1_{\mathrm{Berk}}}
\newcommand{\reas}[1]{#1^c}
\renewcommand{\epsilon}{\varepsilon}
\newtheorem{theorem}{Theorem}[section]
\newtheorem{proposition}[theorem]{Proposition}
\newtheorem{lemma}[theorem]{Lemma}
\newtheorem{conjecture}[theorem]{Conjecture}
\newtheorem{corollary}[theorem]{Corollary}
\theoremstyle{remark}
\newtheorem{remark}[theorem]{Remark}
\theoremstyle{definition}
\newtheorem{definition}[theorem]{Definition}
\begin{document}
\begin{abstract}
Denis associated to each Drinfeld module $\phi$ over a global function function field $L$ a canonical height function $\hat{h}_\phi$, which plays a role analogous to that of the N\'{e}ron-Tate height in the context of elliptic curves.  We prove that there exists a constant $\epsilon>0$, depending only on the number of places at which $\phi$ has bad reduction, such that either $x\in \phi(L)$ is a torsion point of bounded order, or else
\[\hat{h}_\phi(x)\geq \epsilon \max\{h(j_\phi), \deg(\Disc_{\phi/L})\},\]
where $j_\phi$ and $\Disc_{\phi/L}$ are analogues of the $j$-invariant and minimal discriminant of an elliptic curve.  As an application, we make some observations about specializations of one-parameter families of Drinfeld modules.
\end{abstract}
\maketitle

\section{Introduction}
\label{sec:intro}

Let $X/\FF_q$ be a curve, and let $A\subseteq K=\FF_q(X)$ be the ring of functions which are regular at some specified point $\infty\in X(\FF_q)$.  If $L/K$ is a finite extension, then a \emph{Drinfeld $A$-module} $\phi/L$ is a ring homomorphism $\phi:A\to\operatorname{End}_L(\mathbb{G}_\mathrm{a})$ satisfying certain additional conditions, and we denote the associated $A$-module structure on $\mathbb{G}_\mathrm{a}(L)$ by $\phi(L)$.  Denis \cite{denis} associated to such an object a canonical height function $\hat{h}_\phi:\phi(L)\to\RR$, differing from the usual Weil height by at most a constant, and satisfying $\hat{h}_\phi(\phi_a(x))=|a|_\infty^r\hat{h}_\phi(x)$ for each $x\in\phi(L)$ and $a\in A$, where $|\cdot|_\infty$ is the $\infty$-adic absolute value, and $r\geq 1$ is the \emph{rank} of $\phi$.  As a consequence of these two properties, the height $\hat{h}_\phi$ vanishes precisely on the torsion submodule of $\phi(L)$.

%Given a Drinfeld module $\phi$ over a global field $L$, we will define below a point $j_\phi$ in a certain weighted projective space such that $\phi$ is isomorphic to $\psi$ if and only if $j_\phi=j_\psi$.  In the case of a Drinfeld $\FF_q[T]$-module $\phi$, with
%\[\phi_T(x)=Tx+a_1x^q+\cdots +a_rx^{q^r},\]
%we will simply take
%\[j_\phi=[a_1:a_2:\cdots:a_r],\]
%where the $i$th entry has weight $q^i-1$.   
Our main result gives a lower bound on the canonical height associated to $\phi$ in terms of two quantities defined below, namely the $j$-invariant $j_\phi$ of $\phi$, and the minimal disciminant $\Disc_{\phi/L}$, as well as the number of places at which $\phi$ has bad reduction.  The quantities $j_\phi$ and $\Disc_{\phi/L}$ are defined by analogy to the case of elliptic curves, and Theorem~\ref{th:jplaces} can be seen as an analogue of a result of Silverman \cite{joebound} in that context (see also \cite{mebound} in the context of the dynamics of unicritical polynomial maps).  Indeed, just as in the results of \cite{joebound}, we need only count places at which $\phi$ has bad reduction that is not potentially good.
\begin{theorem}\label{th:jplaces}
For every $s, r\geq 1$ there exist a constant $\epsilon>0$, and an ideal $\mathfrak{a}\subseteq A$, such that for every Drinfeld module $\phi/L$ of rank $r$ with at most $s$ places of persistently bad reduction, and every $x\in\phi(L)$, either $x\in\phi[\mathfrak{a}]$ or else
\[\hat{h}_\phi(x)\geq \epsilon \max\{h(j_\phi), \deg(\Disc_{\phi/L})\}.\]
\end{theorem}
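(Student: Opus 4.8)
The plan is to estimate $\h_\phi(x)$ place by place, using the decomposition of the canonical height into local canonical heights $\h_\phi(x)=\sum_v\hat\lambda_{\phi,v}(x)$, where $\hat\lambda_{\phi,v}$ is obtained by applying the telescoping limit $\lim_n|a|_\infty^{-rn}\log^+|\phi_{a^n}(x)|_v$ to the standard local height on $\mathbb{A}^1$, and where I normalize heights relative to $L$ so that the comparison constants below depend only on $q$ and $r$. The first step is to isolate the places that can contribute negatively. At a place $v$ of good reduction one computes directly from the defining limit that $\hat\lambda_{\phi,v}(x)=\log^+|x|_v\geq 0$; since the local canonical height is insensitive to base extension, at a place of potentially good reduction one still has $0\leq\hat\lambda_{\phi,v}(x)\leq\log^+|x|_v$ — the upper bound because achieving good reduction over an extension conjugates $x$ by an element of non-negative valuation. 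Thus only the $\leq s$ places of persistently bad reduction can make a negative contribution, and one is reduced to bounding those contributions from below in terms of the local parts of $h(j_\phi)$ and $\deg(\Disc_{\phi/L})$. I would also record here the elementary comparison $h(j_\phi)\leq\deg(\Disc_{\phi/L})\leq c(q,r)\,h(j_\phi)+\deg(\Disc_{\phi/L}^{\mathrm{pg}})$, where $\Disc_{\phi/L}^{\mathrm{pg}}$ is the part of the minimal discriminant supported at places of potentially good reduction: at a place of potentially multiplicative reduction $|v(j_\phi)|$ and $v(\Disc_{\phi/L})$ agree up to a $q$-bounded error, while $j_\phi$ is a unit at a place of potentially good reduction. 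So it suffices to prove $\h_\phi(x)\gg_{q,r}h(j_\phi)$ and $\h_\phi(x)\gg_{q,r}\deg(\Disc_{\phi/L}^{\mathrm{pg}})$, off a fixed torsion subscheme.

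For the bound by $h(j_\phi)$ I would use Drinfeld's analytic uniformization at each persistently bad place $v$ — which includes the place(s) of $L$ above $\infty$ when $r\geq 2$: over $\bar L_v$ the module $\phi$ is isomorphic to $\psi\otimes\Lambda$ for a Drinfeld module $\psi$ of rank $r'<r$ with good reduction and a $\psi$-lattice $\Lambda$ of rank $r-r'\geq 1$, with $\phi(\bar L_v)=\bar L_v/\Lambda$ via the lattice exponential. In these coordinates $\hat\lambda_{\phi,v}$ acquires a formula of Bernoulli type: the leading term is a piecewise-polynomial function of the class of $x$ modulo $\Lambda$, scaled by a quantity comparable to $|v(j_\phi)|$, and the correction term is bounded in absolute value as soon as $x$ stays out of the finitely many ``bad discs'', namely the residue discs about the torsion points of $\psi\otimes\Lambda$ whose annihilator has bounded norm. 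The plan is a dichotomy: if $x$ avoids every bad disc at every persistently bad place, then at each such $v$ one gets $\hat\lambda_{\phi,v}(x)\geq c(q,r)\,|v(j_\phi)|\deg v-\log^+|x|_v$, in fact a positive multiple of the local contribution to $h(j_\phi)$, and summing (using the good and potentially good places to absorb the $\log^+|x|_v$ terms) gives $\h_\phi(x)\gg_{q,r}h(j_\phi)$; and if $x$ lies in some bad disc, then either $\phi_{\mathfrak{b}}(x)=0$ for an ideal $\mathfrak{b}$ of bounded norm, so that $x\in\phi[\mathfrak{a}]$ with $\mathfrak{a}$ the (bounded) least common multiple of the admissible $\mathfrak{b}$, or $\phi_{\mathfrak{b}}(x)$ is a nonzero point with a zero of prescribed order at $v$, whose height is then fed back through $\h_\phi(\phi_{\mathfrak{b}}(x))=|\mathfrak{b}|_\infty^r\h_\phi(x)$.

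The bound by $\deg(\Disc_{\phi/L}^{\mathrm{pg}})$ is the part I expect to be the main obstacle, because it has no purely local source: at a place of potentially good reduction $\hat\lambda_{\phi,v}$ vanishes on points that are integral there, even non-torsion ones, so the estimate must come from a global mechanism. Here I would follow the philosophy of the proof of Lang's height conjecture over function fields: $\h_\phi(x)$ and $\deg(\Disc_{\phi/L})$ both admit intersection-theoretic interpretations on a suitable integral model of $\phi$ over the smooth curve underlying $L$ — the point $x$ gives a section whose self-intersection computes $\h_\phi(x)$, and the singular fibres compute $\Disc_{\phi/L}$ — and a geometric inequality forcing a section that is ``integral'' at a bad fibre to meet the special configuration there with controlled multiplicity, summed over all such fibres via the product formula, should yield $\h_\phi(x)\gg_{q,r}\deg(\Disc_{\phi/L}^{\mathrm{pg}})$ outside the exceptional torsion. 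It is precisely at this step that the hypothesis is used in the form stated: each potentially good fibre costs only a $q$-bounded amount per unit of discriminant, and — unlike the persistently bad fibres, which are handled by the local Bernoulli estimates of the previous paragraph and whose number must therefore be bounded — these costs are met by the positivity produced by the global argument rather than by any local lower bound. Assembling the two bounds, and choosing $\epsilon$ and $\mathfrak{a}$ to accommodate all the $q$- and $r$-dependent constants, then completes the proof.
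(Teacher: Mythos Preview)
Your overall architecture --- decompose $\hat{h}_\phi$ into local contributions, observe non-negativity at places of (potentially) good reduction, and do the real work at the $\leq s$ persistently bad places --- matches the paper.  But the two substantive steps diverge from the paper's argument, and one of them rests on a genuine misconception.

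Your claim that the bound $\hat{h}_\phi(x)\gg\deg(\Disc_{\phi/L}^{\mathrm{pg}})$ ``has no purely local source'' is wrong, and this is where the plan goes astray.  The paper introduces, alongside the Green's function $G_{\phi,v}(x)=\lim_n |a|_\infty^{-rn}\log^+|\phi_{a^n}(x)|_v$ that you are using, a second local height $\lambda_{\phi,v}(x)=\log|x^{-1}|_v+G_{\phi,v}(x)+c_v(\phi)$; by the product formula both sum to $\hat{h}_\phi(x)$ for $x\neq 0$, so one may work with the average $\tfrac{1}{2}(\lambda_{\phi,v}+G_{\phi,v})$.  At a finite place $v$ of potentially good but not good reduction, the change of variables $\alpha$ achieving good reduction over $\CC_v$ has $0<v(\alpha)<1$ (strictly fractional, by minimality), while $v(x)\in\ZZ$ for $x\in L$.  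This integrality gap alone forces
\[
\lambda_{\phi,v}(x)+G_{\phi,v}(x)\ \geq\ \frac{1}{d(A,r)-1}\,\Disc_{\phi,v}
\]
for \emph{every} $x\in\phi(L)$, a purely local inequality (Lemma~\ref{lem:potgoodred}).  No intersection theory or global model is needed; the point is that if you track only $G_{\phi,v}$ you are discarding exactly the information that detects $\Disc_{\phi,v}$ at such places.

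At the persistently bad places your route via Tate--Drinfeld uniformization and a Bernoulli-type formula is quite different from the paper, which uses only elementary Newton-polygon estimates (in the style of Ghioca) combined with an iterated pigeonhole: one repeatedly passes to additive subgroups of multipliers $Y\subseteq A$, shrinking by a factor depending only on $q,r$ at each step, until for some $a\in Y$ of bounded norm either $\phi_a(x)=0$ or $\phi_a(x)$ is ``$T$-generic'' at every place of $S$ simultaneously, whereupon the local lower bounds apply at all bad places at once.  Your uniformization sketch is plausible in outline, but two points are not under control as written: the ``absorption'' of the $-\log^+|x|_v$ terms by the good and potentially good places does not follow from the inequalities you have recorded (those terms live at disjoint sets of places), and the mechanism for escaping the bad discs at several persistently bad places simultaneously is not spelled out.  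The paper's pigeonhole is designed precisely to solve this simultaneity problem, at the cost of producing only a weaker inequality with an additive constant $-C$; the constant is then removed by a separate Northcott argument (Lemma~\ref{lem:lowernorthcott}) showing that only finitely many $L$-isomorphism classes have $\max\{h(j_\phi),\deg(\Disc_{\phi/L})\}$ below any fixed bound.
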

We immediately recover a theorem of Ghioca \cite{ghioca1} bounding the torsion submodule of a Drinfeld module in terms of the number of places of bad reduction.
\begin{corollary}[Ghioca~\cite{ghioca1}]\label{th:numberofplaces}
Let $\phi$ be a Drinfeld module over the global function field $L$.  Then $\#\phi^{\mathrm{Tors}}(L)$ is bounded by a quantity depending only on $L$, $q$, the rank of $\phi$, and the number of places at which $\phi$ has bad reduction.
\end{corollary}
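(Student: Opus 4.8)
The plan is to read the corollary off from Theorem~\ref{th:jplaces} using the fact that a torsion point has canonical height zero, and to dispose of the degenerate case of everywhere-good reduction by a classical reduction argument. Write $s$ for the number of places of $L$ at which $\phi$ has bad reduction and $r$ for its rank. Since any place of persistently bad reduction is in particular a place of bad reduction, $\phi$ has at most $s$ places of persistently bad reduction.

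Suppose first that $s\geq 1$, and apply Theorem~\ref{th:jplaces} with these values of $s$ and $r$ to obtain a constant $\epsilon>0$ and a nonzero ideal $\mathfrak{a}\subseteq A$, both depending only on $s$ and $r$. Because $\phi$ has a place of bad reduction, $\deg(\Disc_{\phi/L})\geq 1$, so $\epsilon\max\{h(j_\phi),\deg(\Disc_{\phi/L})\}>0$; on the other hand $\hat{h}_\phi(x)=0$ for every $x\in\phi^{\mathrm{Tors}}(L)$. The inequality in Theorem~\ref{th:jplaces} therefore cannot hold for such an $x$, forcing $x\in\phi[\mathfrak{a}]$; that is, $\phi^{\mathrm{Tors}}(L)\subseteq\phi[\mathfrak{a}]$. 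Since the $\mathfrak{a}$-torsion of a rank-$r$ Drinfeld module embeds into $(A/\mathfrak{a})^r$, this yields $\#\phi^{\mathrm{Tors}}(L)\leq|A/\mathfrak{a}|^r$, a bound depending only on $s$ and $r$.

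It remains to treat the case $s=0$, in which $\phi$ has good reduction at every place of $L$ and Theorem~\ref{th:jplaces} is vacuous; here I would argue classically, by reducing modulo two primes. Fix distinct primes $\mathfrak{p}_1\neq\mathfrak{p}_2$ of $A$ of smallest possible degree and choose places $v_1,v_2$ of $L$ lying above them. Good reduction at $v_i$ makes $\phi[\mathfrak{b}]$ \'etale at $v_i$ for every ideal $\mathfrak{b}$ of $A$ coprime to $\mathfrak{p}_i$, so reduction modulo $v_i$ is injective on the prime-to-$\mathfrak{p}_i$ part of $\phi^{\mathrm{Tors}}(L)$. Consequently the kernel of the combined reduction map $\phi^{\mathrm{Tors}}(L)\to\bar\phi_{v_1}(\mathbb{F}_{v_1})\times\bar\phi_{v_2}(\mathbb{F}_{v_2})$ lies in the intersection of the $\mathfrak{p}_1$-primary and $\mathfrak{p}_2$-primary parts of $\phi^{\mathrm{Tors}}(L)$, which is trivial since $A$ is a Dedekind domain; hence $\#\phi^{\mathrm{Tors}}(L)\leq q^{\deg v_1+\deg v_2}$. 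As $\mathfrak{p}_1,\mathfrak{p}_2$ may be taken of degree bounded in terms of $A$, and $\deg v_i\leq[L:K]\deg\mathfrak{p}_i$, this bounds $\#\phi^{\mathrm{Tors}}(L)$ in terms of $L$ and $q$, and together with the previous case it proves the corollary. I expect the deduction to be essentially routine; the only delicate point is this degenerate case, where the height inequality is empty and one falls back on reduction --- conceptually, such a $\phi$ is isotrivial and its $L$-rational torsion, being constant, is controlled by the finite constant field of $L$.
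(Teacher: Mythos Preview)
Your argument is correct, and the main step (the case $s\geq 1$) is precisely the deduction the paper intends: the paper gives no separate proof of the corollary, merely asserting that it is ``immediately recovered'' from Theorem~\ref{th:jplaces}.  One small correction: a single place of bad reduction only forces $\deg(\Disc_{\phi/L})>0$, not $\geq 1$ (the local contribution $\Disc_{\phi,v}$ lies in $\frac{\deg(v)}{q^{r\deg(T)}-1}\ZZ$); but strict positivity is all your argument uses.

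Your observation that the \emph{statement} of Theorem~\ref{th:jplaces} is vacuous when $\max\{h(j_\phi),\deg(\Disc_{\phi/L})\}=0$ is astute, and your reduction-mod-two-primes argument is a clean and self-contained way to dispose of that degenerate case.  It is worth noting, however, that the paper's own \emph{proof} of Theorem~\ref{th:jplaces} already absorbs this case by a different mechanism: there, Lemma~\ref{lem:lowernorthcott} shows that only finitely many $L$-isomorphism classes satisfy $\max\{h(j_\phi),\deg(\Disc_{\phi/L})\}\leq 2C/\epsilon$, and the ideal $\mathfrak{a}$ is then explicitly enlarged to contain the annihilator of $\phi^{\mathrm{Tors}}(L)$ for each of these exceptional classes.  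So by the construction underlying Theorem~\ref{th:jplaces} one has $\phi^{\mathrm{Tors}}(L)\subseteq\phi[\mathfrak{a}]$ uniformly, even in the degenerate case, with $\mathfrak{a}$ depending on $L$ as well as on $s$ and $r$.  Your reduction argument and the paper's Northcott-finiteness argument are thus two genuinely different routes through the same edge case; yours is more elementary and independent of the machinery in Section~\ref{sec:global}, while the paper's is a byproduct of what is already proved.
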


We note that the result in \cite{ghioca1} also involves a lower bound on the canonical height, and the methods introduced there contribute to the proof of Theorem~\ref{th:jplaces}.  The lower bound produced in \cite{ghioca1} is, however, of the form $\hat{h}_\phi(x)\geq \epsilon$, where $\epsilon$ depends on the number of places of bad reduction, and hence is fundamentally weaker than that presented in Theorem~\ref{th:jplaces}.  
For example, we will show below that for any $B\geq 1$, there are only finitely many $L$-isomorphism classes of Drinfeld modules $\phi/L$ of a given rank satisfying
\[ \max\{h(j_\phi), \deg(\Disc_{\phi/L})\}\leq B.\]
The lower bound in Theorem~\ref{th:jplaces}, therefore, becomes arbitrarily large as one varies the Drinfeld module $\phi/L$.  The analogue of Theorem~\ref{th:jplaces} for elliptic curves is a key ingredient in Silverman's quantitative version of Siegel's Theorem \cite{jhs-siegel}, and we suspect that Theorem~\ref{th:jplaces} will have similar applications.  Motivated by this, we propose the following conjecture, analogous to conjectures of Lang \cite[Conjecture~VIII.9.9]{aec} and Silverman~\cite[Conjecture~4.98]{ads}.  
\begin{conjecture}\label{conj:lang}
For every finite extension $L/K$ and every $r\geq 1$ there is an $\epsilon>0$ such that if $\phi/L$ is a Drinfeld module of rank $r$, and $x\in\phi(L)$ is non-torsion, then
\[\hat{h}_\phi(x)\geq \epsilon\max\{h(j_\phi), \deg(\Disc_{\phi/L})\}.\]
\end{conjecture}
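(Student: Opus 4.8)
I would follow the strategy of Silverman's elliptic‑curve result \cite{joebound}: decompose $\hat{h}_\phi$ into local canonical heights and bound each local term according to the reduction type of $\phi$ at the place concerned, then reassemble. Two preliminary steps come first. By the Northcott‑type finiteness noted in the introduction, there are only finitely many $L$‑isomorphism classes of rank‑$r$ Drinfeld modules with $\max\{h(j_\phi),\deg(\Disc_{\phi/L})\}$ below any fixed bound; for each such class $\phi^{\mathrm{Tors}}(L)$ is finite by \cite{ghioca1}, and the infimum of $\hat{h}_\phi$ over its non‑torsion points is positive, so after shrinking $\epsilon$ and enlarging $\mathfrak{a}$ to annihilate all of this torsion it suffices to treat $\phi$ with $\max\{h(j_\phi),\deg(\Disc_{\phi/L})\}$ larger than a constant $B=B(q,r,s)$. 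Next, write $\hat{h}_\phi(x)=\sum_v\hat{\lambda}_{\phi,v}(x)$, the sum over places $v$ of $L$ weighted by local degrees, where $\hat{\lambda}_{\phi,v}$ is the associated local canonical height, a local Green's function for the dynamics of $\phi$ on the Berkovich affine line over $\CC_v$; it satisfies $\hat{\lambda}_{\phi,v}(\phi_a(x))=|a|_\infty^r\hat{\lambda}_{\phi,v}(x)$ for $a\in A$ and differs from $\log^+\|\cdot\|_v$ by a bounded amount. At a place of good reduction one simply has $\hat{\lambda}_{\phi,v}(x)=\log^+\|x\|_v\ge 0$, which vanishes when $x$ is $v$‑integral, and such places influence neither $h(j_\phi)$ nor $\deg(\Disc_{\phi/L})$.

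The content is at the bad places, and here the methods of \cite{ghioca1} are essential. At a place $v$ of potentially good but bad reduction, a Newton‑polygon analysis of the minimal model (or, equivalently, a comparison with the good model over an extension of degree bounded in terms of $q$ and $r$) gives $\hat{\lambda}_{\phi,v}(x)\ge\max\{0,\ \log^+\|x\|_v-c_1\,v(\Disc_{\phi,v})\deg v\}$, and crucially the discriminant exponent $v(\Disc_{\phi,v})$ at such a place is bounded in terms of $q$ and $r$ alone. At a persistently bad place $v$ — there are at most $s$ of these, and they are exactly the places at which $j_\phi$ is not integral — I would use the Tate--Drinfeld uniformization: over $\CC_v$ the module $\phi$ is analytically isomorphic to the quotient of a good‑reduction Drinfeld module of strictly smaller rank by a discrete finitely generated $A$‑lattice, and on a fundamental domain $\hat{\lambda}_{\phi,v}$ is an explicit piecewise‑polynomial (``Bernoulli‑type'') function of the valuation of the uniformizer. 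This yields $\hat{\lambda}_{\phi,v}(x)\ge-c_2\,|v(j_\phi)|\deg v$, with $\hat{\lambda}_{\phi,v}(x)$ in fact bounded below by a positive multiple of $|v(j_\phi)|\deg v$ unless $x$ is $v$‑adically within a prescribed distance of the torsion of some fixed order.

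Summing these local bounds — and, in the spirit of \cite{ghioca1}, applying the persistently‑bad estimate to a suitable translate or iterate $\phi_a(x)$ rather than to $x$ itself, in order to trade the $O(s)$ losses against good‑reduction gains — I would obtain $\hat{h}_\phi(x)\ge\epsilon_0\max\{h(j_\phi),\deg(\Disc_{\phi/L})\}$ unless $x$ lies, at every bad place, $v$‑adically close to torsion of order dividing a fixed ideal; a global argument via the product formula then forces such an $x$ into $\phi[\mathfrak{a}]$ once $\mathfrak{a}$ is chosen large enough. I expect the main obstacle to be producing $\deg(\Disc_{\phi/L})$, rather than merely $h(j_\phi)$, on the right‑hand side. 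The minimal discriminant is driven largely by the additive (potentially good) places; there the local canonical height is non‑negative on $v$‑integral points and therefore cannot on its own detect the discriminant, and the number of such places, although each carries only a bounded discriminant exponent, is unbounded. Controlling their total contribution will require more than place‑by‑place estimates — exploiting the interplay between the minimal model, the drop in rank at additive reduction, and the hypothesis that $x$ is non‑torsion — and it is exactly here that Theorem~\ref{th:jplaces} improves on the bound $\hat{h}_\phi(x)\ge\epsilon$ of \cite{ghioca1}.
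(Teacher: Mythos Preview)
The statement you are attempting to prove is a \emph{conjecture}: the paper does not prove it, and explicitly presents Theorem~\ref{th:jplaces} as the partial result obtained by adding the hypothesis of at most $s$ places of persistently bad reduction. Your sketch silently imports exactly that hypothesis --- you write ``there are at most $s$ of these'' and take constants $B(q,r,s)$ depending on $s$ --- so what you are really outlining is a proof of Theorem~\ref{th:jplaces}, not of Conjecture~\ref{conj:lang}. Without a bound on $s$, the iterative passage to subsets of $A$ (your ``suitable translate or iterate $\phi_a(x)$'', analogous to the paper's Lemmas~\ref{lem:localchoose} and~\ref{lem:basinornot}) loses a factor exponential in the number of bad places and yields nothing.

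You also have the location of the difficulty reversed. The potentially good (additive) places are \emph{not} the obstacle: the paper's Lemma~\ref{lem:potgoodred} shows that at every such place $\lambda_{\phi,v}(x)+G_{\phi,v}(x)\ge\frac{1}{d-1}\Disc_{\phi,v}$ for every $x\in\phi(L)$, so these places contribute a positive multiple of the local discriminant automatically, with no need to bound their number. The genuine obstruction to the full conjecture lies at the persistently bad places, where the local height can be negative and where your Tate--Drinfeld estimate $\hat\lambda_{\phi,v}(x)\ge -c_2\,|v(j_\phi)|\deg v$ is a lower bound that becomes useless once summed over an unbounded set of such places. Removing the dependence on $s$ there is precisely what separates Theorem~\ref{th:jplaces} from Conjecture~\ref{conj:lang}, and neither your proposal nor the paper supplies an argument for it.
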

Another way to state of Theorem~\ref{th:jplaces}, then, is that Conjecture~\ref{conj:lang} holds if one restricts attention to Drinfeld modules with potentially good reduction at all but a bounded number of places.  We note that, if true, Conjecture~\ref{conj:lang} is essentially the best-possible form of lower bound, at least if one assumes the weaker of Poonen's Uniform Boundedness Conjectures for Drinfeld modules \cite[Conjecture~1]{poonenunif}.
\begin{theorem}\label{th:sharpness}
Let $L/K$ be a finite extension, let $r\geq 1$, and suppose that there is a uniform bound on the size of the torsion submodules $\phi^{\mathrm{Tors}}(L)$ as $\phi/L$ varies over Drinfeld modules of rank $r$.  Then for every Drinfeld module $\phi/L$ of rank $r$, there is a non-torsion $x\in \phi(L)$ such that
\[\hat{h}_\phi(x)\leq 2\max\{h(j_\phi), \deg(\Disc_{\phi/L})\}+O(1),\]
where the implied constant depends only on $L$ and $r$.
\end{theorem}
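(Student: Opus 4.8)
The plan is to take the non-torsion point to be an essentially arbitrary element of $L$ of small Weil height, and then to control the gap between Denis's canonical height and the naive Weil height in terms of $h(j_\phi)$ and $\deg(\Disc_{\phi/L})$. Since both the hypothesis and the conclusion are invariant under $L$-isomorphism — an $L$-isomorphism preserves $j_\phi$, $\Disc_{\phi/L}$, and the set of non-torsion points, and, because $\h_\phi$ is intrinsic, the value of $\h_\phi$ on corresponding points — we may assume throughout that $\phi$ is a minimal model, i.e.\ is chosen within its $L$-isomorphism class so that the degree of its discriminant divisor equals $\deg(\Disc_{\phi/L})+O(1)$. This is what places the coefficients of $\phi_a$, for $a\in A$, under height control, in the sense made precise below.

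\textbf{A non-torsion point of bounded naive height.} By hypothesis there is a constant $N=N(L,r)$ with $\#\phi^{\mathrm{Tors}}(L)\le N$ for every Drinfeld module $\phi/L$ of rank $r$. Since $\phi(L)=\mathbb{G}_{\mathrm{a}}(L)=L$ is infinite, and a Riemann--Roch count shows that $\#\{x\in L:h(x)\le T\}\to\infty$ as $T\to\infty$, there is a threshold $T_0=T_0(L,N)=T_0(L,r)$ with $\#\{x\in L:h(x)\le T_0\}>N$; hence there exists a non-torsion $x_0\in\phi(L)$ with $h(x_0)\le T_0=O(1)$, the implied constant depending only on $L$ and $r$.

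\textbf{The defect estimate.} It then suffices to show $\h_\phi(x)\le h(x)+2\max\{h(j_\phi),\deg(\Disc_{\phi/L})\}+O(1)$ for all $x\in\phi(L)$. Fix $a\in A$ with $|a|_\infty$ minimal subject to $|a|_\infty>1$, and write $\phi_a(X)=aX+b_1X^q+\cdots+b_NX^{q^N}$, so that $\phi_a$ extends to a morphism $\mathbb{P}^1\to\mathbb{P}^1$ of degree $q^N=|a|_\infty^r$. Functoriality of heights on $\mathbb{P}^1$ gives $h(\phi_a(y))\le |a|_\infty^r\, h(y)+\gamma(\phi)$ for all $y$, where $\gamma(\phi)\le\max_i h(b_i)+O(1)$, the usual combinatorial term disappearing in characteristic $p$. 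Applying this with $y=\phi_{a^n}(x)$, dividing by $|a|_\infty^{r(n+1)}$, and using $\h_\phi(x)=\lim_n|a|_\infty^{-rn}h(\phi_{a^n}(x))$, the resulting telescoping sum is dominated by $\sum_{n\ge1}\gamma(\phi)|a|_\infty^{-rn}$, whence
\[\h_\phi(x)\le h(x)+\frac{\gamma(\phi)}{|a|_\infty^r-1}.\]

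\textbf{The main point: bounding the coefficients of $\phi_a$.} It remains to bound $\max_i h(b_i)$ by $\max\{h(j_\phi),\deg(\Disc_{\phi/L})\}$. With $\phi$ normalized as a minimal model, $\deg(\Disc_{\phi/L})$ controls the leading coefficient of $\phi_a$ and the denominators occurring in its coefficients, while the $j$-invariants of $\phi$ control the shape of the remaining coefficient vector up to an additional multiple of $\deg(\Disc_{\phi/L})$; since each $b_i$ is a fixed polynomial expression in the defining coefficients of $\phi$, this yields $\max_i h(b_i)\le c_r\max\{h(j_\phi),\deg(\Disc_{\phi/L})\}+O(1)$ for a constant $c_r$ depending only on $r$. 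A careful accounting of the exponents that appear in this expression — they involve only the fixed quantity $|a|_\infty^r$, because $a$ is the minimal such element — gives $c_r\le 2(|a|_\infty^r-1)$, and the displayed estimate then becomes the desired defect bound; for $r=1$ there are no $j$-invariants and the inequality is immediate. The routine ingredients are the Riemann--Roch count and the functoriality/telescoping estimate; the real work, and the only place where the precise constant $2$ enters, is this last step: making the reduction to a minimal model effective and propagating the height bounds on the coefficients of $\phi$ through to those of $\phi_a$.
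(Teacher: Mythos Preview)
Your overall strategy matches the paper's: reduce to a well-chosen model in the $L$-isomorphism class, pick a non-torsion point of bounded naive height using the uniform torsion hypothesis, and bound the defect $\hat{h}_\phi - h$. The paper carries out this last step by combining Theorem~\ref{th:zimmer} (giving $\hat{h}_\phi(x) - h(x) \leq h(\phi) + O(1)$) with Lemma~\ref{lem:lowernorthcott} (showing that for the model minimizing $h(\phi)$ within its $L$-isomorphism class one has $h(\phi) \leq 2\max\{h(j_\phi), \deg(\Disc_{\phi/L})\} + g(L) + [L:K] - 1$). Your telescoping argument is a valid rederivation of the easy direction of Theorem~\ref{th:zimmer}.

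The genuine gap is in your final paragraph, where the inequality $c_r \leq 2(|a|_\infty^r - 1)$ is asserted but not proved. Your chosen normalization --- making the degree of the discriminant divisor equal to $\deg(\Disc_{\phi/L}) + O(1)$ --- is quasi-minimality in the sense of Proposition~\ref{lem:classgp}, and it constrains only $\sum_{v \in M_L^0} c_v(\phi)$. It does \emph{not} by itself bound the infinite-place contributions to $h(b_i)$: when $L$ has several infinite places, a quasi-minimal model can have $c_v(\phi)$ arbitrarily large at one of them (compensated elsewhere by the product formula), and then $\max_i h(b_i)$ is unbounded. The paper's remedy is to minimize instead the quantity $h(\phi) = h(j_\phi) + \sum_{v \in M_L}\max\{0, c_v(\phi)\}$ over the $L$-isomorphism class, and then to run a Riemann--Roch argument (the triviality of a certain $\mathscr{L}(D - [w])$, Lemma~\ref{lem:lowernorthcott}) to bound this minimum. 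That Riemann--Roch step is precisely where the constant $2$ and the additive $O(1)$ arise; it is not a routine ``accounting of exponents'', and without it your argument does not close.
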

Note that, by Corollary~\ref{th:numberofplaces}, Theorem~\ref{th:sharpness} can be made unconditional if one restricts attention to Drinfeld modules with potentially good reduction at all but a bounded number of places, showing that Theorem~\ref{th:jplaces} is essentially the best possible result in this context.

As an application of  Theorem~\ref{th:jplaces}, we consider the problem of specializing a family of Drinfeld modules.  If $C/L$ is a curve, and if  $\phi/L(C)$ is a Drinfeld module,  one might ask how the algebraic structure of the fibre $\phi_\beta(L)$ varies with $\beta\in C(L)$, omitting values at which the coefficients of $\phi$ are not defined.  For example, if $x\in \phi(L(C))$ is non-torsion, then for each $\beta\in L(C)$ we obtain a specialization homomorphism $\sigma_\beta: \langle x\rangle\to \Gamma_{\phi_\beta}(x_\beta, L)$, where 
\[\Gamma_{\phi_\beta}(x_\beta, L)=\{z\in \phi_\beta(L):\phi_{\beta, a}(z)=\phi_{\beta, b}(x_\beta)\text{ for some }a, b\in A\}\]
is the largest submodule of $\phi_\beta(L)$ onto which one could hope that $\sigma_\beta$ would surject, that is, the largest  submodule of $\phi(L)$ containing $x_\beta$ but having the same rank as $\langle x_\beta\rangle$.  One could ask how often $\sigma_\beta$ is injective, and how close $\sigma_\beta$ is to being surjective.
\begin{theorem}\label{th:fams}
Let $\phi/L(C)$ be a non-isotrivial Drinfeld $A$-module, and let $x\in \phi(L(C))$ be non-torsion.
\begin{enumerate}
\item The specialization homomorphism is injective outside of a set of bounded height.
\item There exists an $E\in\Div(C)$ and a finite set $S\subseteq M_L$ of places such that the index $(\Gamma_{\phi_\beta}(x_\beta, L):\mathrm{im}(\sigma_\beta))$ is bounded by a quantity which depends only on $\phi$, $x$, and the number of places $v\not\in S$ at which $\beta$ is not integral relative to $E$.
\end{enumerate}
\end{theorem}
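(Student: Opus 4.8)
The plan is to reduce both parts to Theorem~\ref{th:jplaces}, using two standard auxiliary inputs: the specialization theorem for canonical heights in the Drinfeld setting (the analogue of Silverman's theorem for families of abelian varieties), and the fact that a Drinfeld module over a local field has potentially good reduction if and only if its $j$-invariant is integral. Throughout, note that $L(C)$ is itself a global function field, so $\hat{h}_\phi$ is defined on $\phi(L(C))$, and since $x$ is non-torsion, $\hat{h}_\phi(x)>0$.

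For part (i), the specialization theorem gives $\hat{h}_{\phi_\beta}(x_\beta)=\hat{h}_\phi(x)\,h(\beta)+o(h(\beta))$ as $h(\beta)\to\infty$, the exceptional set being the finitely many $\beta$ at which a coefficient of $\phi$ is undefined or the rank of $\phi_\beta$ drops. Hence $\hat{h}_{\phi_\beta}(x_\beta)>0$, i.e. $x_\beta$ is non-torsion, once $h(\beta)$ exceeds some bound $B_0$; for such $\beta$ the $A$-linear map $\sigma_\beta$, whose source $\langle x\rangle\cong A$, has trivial kernel. By Northcott's theorem $\{\beta\in C(L):h(\beta)\le B_0\}$ is finite, and this is the claimed exceptional set.

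For part (ii), take $E\in\Div(C)$ to be the reduced polar divisor of $j_\phi$ --- equivalently a divisor supported on the locus over which $\phi$ has persistently bad reduction --- and let $S\subseteq M_L$ be a finite set containing the places at which a fixed model of $\phi$ over $C$ has bad reduction, together with finitely many further nuisance places. The key sub-lemma is: at a place $v\notin S$ at which $\beta$ is integral relative to $E$, the reduction of $\beta$ avoids the polar locus of $j_\phi$, so $j_{\phi_\beta}=j_\phi(\beta)$ is $v$-integral, whence $\phi_\beta$ has potentially good reduction at $v$. Consequently the number of places of persistently bad reduction of $\phi_\beta$ is at most $s:=\#S+s_\beta$, where $s_\beta$ is the number of places $v\notin S$ at which $\beta$ is not integral relative to $E$. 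Now fix $\beta$ with $h(\beta)>B_0$, write $\Gamma=\Gamma_{\phi_\beta}(x_\beta,L)$, so $\mathrm{im}(\sigma_\beta)=\langle x_\beta\rangle$ and $\hat{h}_{\phi_\beta}(x_\beta)>0$, and factor
\[ (\Gamma:\mathrm{im}(\sigma_\beta))=\#\phi_\beta^{\mathrm{Tors}}(L)\cdot J,\qquad J=\big(\Gamma/\phi_\beta^{\mathrm{Tors}}(L):\langle x_\beta\rangle\big). \]
Apply Theorem~\ref{th:jplaces} with this $s$. Every torsion point of $\phi_\beta$ over $L$ has canonical height zero, hence lies in $\phi_\beta[\mathfrak{a}]$ for $\mathfrak{a}=\mathfrak{a}(r,s)$ (the alternative inequality being vacuous for such points unless $\max\{h(j_{\phi_\beta}),\deg(\Disc_{\phi_\beta/L})\}=0$, a case handled below), so $\#\phi_\beta^{\mathrm{Tors}}(L)\le\#\phi_\beta[\mathfrak{a}(r,s)](L)$ is bounded in terms of $r$ and $s$. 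The quotient $\big(\Gamma/\phi_\beta^{\mathrm{Tors}}(L)\big)/\langle x_\beta\rangle$ is a cyclic $A$-module of order $J$ (a rank-one computation over the Dedekind ring $A$), and a rank-one lattice argument produces $z_0\in\Gamma$ generating it with $\hat{h}_{\phi_\beta}(z_0)\le c_1\hat{h}_{\phi_\beta}(x_\beta)J^{-c_0}$ for constants $c_0,c_1>0$ depending only on $A$ and $r$. Since $z_0$ is non-torsion, Theorem~\ref{th:jplaces} gives $\hat{h}_{\phi_\beta}(z_0)\ge\epsilon(r,s)\max\{h(j_{\phi_\beta}),\deg(\Disc_{\phi_\beta/L})\}$; non-isotriviality of $\phi$ makes this maximum grow at least linearly in $h(\beta)$ (via functoriality of heights under $j_\phi\colon C\to\PP^1$ when $j_\phi$ is non-constant, and a Shafarevich-type finiteness statement otherwise, which also disposes of the $\max=0$ case), while elementary height estimates bound $\hat{h}_{\phi_\beta}(x_\beta)$ above by a constant multiple of $h(\beta)$. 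Comparing the two bounds forces $J$, hence $(\Gamma:\mathrm{im}(\sigma_\beta))$, to be bounded in terms of $\phi$, $x$ and $s$ --- that is, of $\phi$, $x$ and $s_\beta$ --- once $h(\beta)$ is large; the finitely many remaining $\beta$ are absorbed into the bound.

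The main obstacle, I expect, is the sub-lemma linking the intersection behaviour of $\beta$ with $E$ to persistent badness of $\phi_\beta$: this needs a clean Drinfeld-module version of ``$v$-integral $j$-invariant implies potentially good reduction'', valid uniformly at all places outside the controlled set $S$, together with some care in matching a model of $\phi$ over $C$ against the reduction types of the fibres. The rank-one lattice estimate for $z_0$ and the remaining height bookkeeping (functoriality under $j_\phi$, the upper bound on $\hat{h}_{\phi_\beta}(x_\beta)$, and Northcott for the low-height fibres) should be comparatively routine.
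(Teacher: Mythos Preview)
Your proposal is correct and follows essentially the same route as the paper: both parts rest on the specialization theorem $\hat{h}_{\phi_\beta}(x_\beta)=h_D(\beta)+O(1)$ with $\deg(D)=\hat{h}_\phi(x)>0$, the choice of $E$ as the pole divisor of $j_\phi$ so that integrality of $\beta$ forces potentially good reduction of $\phi_\beta$, and then the comparison of the lower bound from Theorem~\ref{th:jplaces} against the upper bound coming from specialization. The only cosmetic difference is that the paper bounds the index by applying the height comparison directly to an arbitrary non-torsion $y\in\Gamma_{\phi_\beta}(x_\beta,L)$ (deducing that the $b$ with $\phi_{\beta,b}(y)\in\langle x_\beta\rangle$ is bounded), rather than first producing a small generator $z_0$ via a lattice argument; your hedging about the case of constant $j_\phi$ is also unnecessary, since non-isotriviality already forces $j_\phi$ to be non-constant.
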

Theorem~\ref{th:fams} is similar in flavour to two results of Silverman~\cite{jhs1, jhs2} in the context of abelian varieties.

In the course of our study of local heights, we prove a result which may be of independent interest.  Denis~\cite{denis}, in the paper defining the canonical height $\hat{h}_\phi$ associated to $\phi$, shows that the difference between the canonical height and the usual height is bounded, and Theorem~\ref{th:zimmer} makes this bound explicit.  This is analogous to result for elliptic curves that was made explicit independently by Dem'janenko~\cite{demjanenko} and Zimmer~\cite{zimmer}, and may be useful for computations involving Drinfeld modules.
\begin{theorem}\label{th:zimmer}
Let $L$ be a global function field and let $\phi/L$ be a Drinfeld module of rank $r\geq 1$.  There exist explicit constants $C_1$ and $C_2$, depending only on $A$, $L$, and $r$, such that
Then for all $x\in L^{\mathrm{sep}}$, we have
\[-C_1-\left(\frac{1}{1-q^{-r}}\right)h(\phi)\leq \hat{h}_\phi(x)-h(x)\leq h(\phi)+C_2,\]
where $h(\phi)$ is defined in Section~\ref{sec:global}.  If $L$ is a finite extension of the field $K$ of fractions of $A$, and the heights above are relative to $K$, then $C_1$ and $C_2$ depend only on $A$ and $r$.
\end{theorem}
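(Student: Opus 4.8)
The proof proceeds by Tate's telescoping argument. Recall that $\hat{h}_\phi$ is obtained from the Weil height $h$ as the limit $\hat{h}_\phi(x)=\lim_{n\to\infty}|a|_\infty^{-rn}h(\phi_{a^n}(x))$, valid for any fixed non-constant $a\in A$; the limit exists precisely because $h(\phi_a(y))$ differs from $|a|_\infty^{r}h(y)$ by an amount bounded independently of $y$. Summing the telescoping series gives
\[\hat{h}_\phi(x)-h(x)=\sum_{n\ge 1}|a|_\infty^{-rn}\Bigl(h\bigl(\phi_a(\phi_{a^{n-1}}(x))\bigr)-|a|_\infty^{r}h\bigl(\phi_{a^{n-1}}(x)\bigr)\Bigr),\]
so everything reduces to bounding $h(\phi_a(y))-|a|_\infty^{r}h(y)$ from both sides, uniformly in $y\in L^{\mathrm{sep}}$, by explicit quantities that depend on $\phi$ only through the heights of the coefficients of $\phi_a$.

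Write $\phi_a=\sum_{i=0}^{r\deg a}c_i\tau^i$ with $c_0=a$ and $c_{r\deg a}\ne 0$, so $\phi_a(Y)=\sum_i c_i Y^{q^i}$ has degree $q^{r\deg a}=|a|_\infty^{r}$ (here $\deg\infty=1$). The upper bound is elementary: at each place $v$ the ultrametric inequality gives $\log^{+}|\phi_a(y)|_v\le q^{r\deg a}\log^{+}|y|_v+\log^{+}\max_i|c_i|_v$, which on summing over $v$ yields $h(\phi_a(y))\le|a|_\infty^{r}h(y)+h(\phi_a)+O(1)$. For the lower bound one uses that $\phi_a\colon\AA^1\to\AA^1$ is a finite separable morphism. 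Solving for the leading term,
\[Y^{q^{r\deg a}}=c_{r\deg a}^{-1}\phi_a(Y)-\sum_{i<r\deg a}c_{r\deg a}^{-1}c_i Y^{q^i},\]
so at each $v$ either the leading term of $\phi_a(y)$ dominates, forcing $q^{r\deg a}\log^{+}|y|_v\le\log^{+}|\phi_a(y)|_v+\log^{+}|c_{r\deg a}|_v^{-1}$, or else $\log^{+}|y|_v$ is controlled by the $\log^{+}$ of the ratios $|c_i/c_{r\deg a}|_v$ (and here the gap $q^{r\deg a}-q^{i}\ge q^{r\deg a-1}(q-1)$ keeps the resulting coefficient bounded). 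Summing over $v$ produces $h(\phi_a(y))\ge|a|_\infty^{r}h(y)-\bigl(\text{explicit multiple of }h(\phi_a)\bigr)-O(1)$; equivalently, the constant in $|h(\phi_a(y))-|a|_\infty^{r}h(y)|$ is bounded in terms of the height of $\phi_a$ regarded as a morphism $\PP^1\to\PP^1$.

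Feeding these into the telescoping identity and summing the geometric series of ratio $|a|_\infty^{-r}$ gives $|\hat{h}_\phi(x)-h(x)|\le(|a|_\infty^{r}-1)^{-1}\delta$, where $\delta$ is the uniform bound on $|h(\phi_a(y))-|a|_\infty^{r}h(y)|$ just obtained. It remains to put this into the asserted shape. Choosing for $a$ a generator of $A$, or the element of least pole order at $\infty$, the coefficients of $\phi_a$ are, by the definition of $h(\phi)$ in Section~\ref{sec:global}, controlled by $h(\phi)$, so $\delta$ is bounded by an explicit multiple of $h(\phi)$ plus $O_{A,r}(1)$; since $|a|_\infty\ge q$ we have $(|a|_\infty^{r}-1)^{-1}\le(q^{r}-1)^{-1}=q^{-r}(1-q^{-r})^{-1}\le(1-q^{-r})^{-1}$. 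The asymmetry of the two bounds in the statement reflects the fact that the upper estimate uses only the lossless polynomial-evaluation inequality, so its constant is essentially $h(\phi)$ itself, whereas the lower estimate must also account for the leading coefficient $c_{r\deg a}$ and so carries the factor $(1-q^{-r})^{-1}$; the remaining $\phi$-independent terms are collected into $C_1$ and $C_2$, which by the standard normalization of heights depend only on $A$, $L$ and $r$, and only on $A$ and $r$ when one works relative to $K$.

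The main difficulty is precisely the lower bound on $h(\phi_a(y))-|a|_\infty^{r}h(y)$: one must control how much the finite map $\phi_a$ can contract heights, uniformly in $y$, while keeping the dependence on $\phi$ inside a single explicit multiple of $h(\phi)$ that is tight enough to yield the stated constants (in particular in the small cases of $q$ and $r$). Two further points require some care but are routine: for a general base ring $A$ the module $\phi$ is given by several generators subject to relations rather than by a single $\phi_T$, so the comparison between the coefficient heights of $\phi_a$ and $h(\phi)$ must be carried out in that generality; and one must track all $\phi$-independent contributions so that the final inequality holds uniformly over every Drinfeld module of rank $r$, with the asserted dependence of $C_1$ and $C_2$.
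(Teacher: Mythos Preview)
Your approach is essentially the paper's, but organized differently. Both arguments are Tate's telescoping combined with ultrametric one-step estimates; the paper, however, carries this out \emph{locally} first. It proves (Lemma~\ref{lem:localheightdiff}) that at every place
\[
-\frac{|T|_\infty^r}{|T|_\infty^r-1}\,j_{\phi_T,v}-\max\{0,-c_v(\phi)\}-O_v(1)\;\le\;\lambda_{\phi,v}(x)-\log^+|x^{-1}|_v\;\le\;j_{\phi_T,v}+\max\{0,c_v(\phi)\}+O_v(1),
\]
by telescoping the local Green's function $G_{\phi_T}$ (upper bound: the crude polynomial estimate you use; lower bound: the dichotomy $|x|>B_T$ versus $|x|\le B_T$, which is the Newton-polygon version of your ``solve for the leading term''). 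It then sums over places in Theorem~\ref{th:heightdiff}. The payoff of the local route is that the right-hand sides are \emph{by definition} the local summands of $h(\phi)=h(j_\phi)+\sum_v\max\{0,c_v(\phi)\}$, so the coefficients $1$ and $(1-q^{-r})^{-1}$ fall out with no further comparison.

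Your global version is correct in outline, but the step you wave at---``the coefficients of $\phi_a$ are, by the definition of $h(\phi)$, controlled by $h(\phi)$''---is exactly where the work sits, and as written it does not yield the stated constants. From your one-step bound you get $\hat h_\phi(x)-h(x)\le (|a|_\infty^r-1)^{-1}\sum_v\log^+\max_i|c_i|_v$; to turn this into $h(\phi)+C_2$ with coefficient exactly $1$ you need the weighted inequality $\frac{1}{q^R-1}\log^+|c_j|_v\le \frac{1}{q^j-1}\log^+|c_j|_v$, a reduction to monic leading coefficient, and the product-formula identity $\sum_v\max\{0,c_v(\phi)\}=\sum_v\max\{0,-c_v(\phi)\}$. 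Similarly, your lower bound yields only ``an explicit multiple of $h(\phi)$'', not the precise $(1-q^{-r})^{-1}$, until you separate the $j_{\phi,v}$ and $c_v(\phi)$ contributions. None of this is hard, but it is not done; the paper's local formulation sidesteps it entirely because $j_{\phi,v}$ and $c_v(\phi)$ are the quantities being tracked from the start.
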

With an eye to Theorem~\ref{th:jplaces} and Conjecture~\ref{conj:lang}, we note that while $h(\phi)$ is not the same as $\max\{h(j_\phi), \deg(\Disc_{\phi/L})\}$, every Drinfeld module is $L$-isomorphic to a model for which these two quantities are commensurate.

In Section~\ref{sec:prelims} we review some of the basic definitions of Drinfeld modules and heights, and introduce the $j$-invariant of a Drinfeld module.  In Section~\ref{sec:localfields} we look more closely at Drinfeld modules over local fields.  Section~\ref{sec:localheights} is devoted to the study of a local height function on a Drinfeld module over a local field, which is related to, but not the same as, the local heights introduced by Poonen \cite{poonen}.  Our local height functions have the advantage of being coordinate invariant, and more closely resembling the local heights on elliptic curves.  In Section~\ref{sec:global} we return to Drinfeld modules over global fields, defining the minimal discriminant $\Disc_{\phi/L}$, and proving Theorem~\ref{th:jplaces}, Theorem~\ref{th:sharpness}, and Theorem~\ref{th:zimmer}.  Finally, in Section~\ref{sec:families}, we prove Theorem~\ref{th:fams}.

%%%%%%%%%%%%%%%%%%%%%%%%%%%%%%%%%%%%%%%%%%%%%%%%%
%%%%%%%%%%%%%%%%%%%%%%%%%%%%%%%%%%%%%%%%%%%%%%%%%
%%%%%%%%%%%%%%%%%%%%%%%%%%%%%%%%%%%%%%%%%%%%%%%%%
%%%%%%%%%%%%%%%%%%%%%%%%%%%%%%%%%%%%%%%%%%%%%%%%%
%%%%%%%%%%%%%%%%%%%%%%%%%%%%%%%%%%%%%%%%%%%%%%%%%
%%%%%%%%%%%%%%%%%%%%%%%%%%%%%%%%%%%%%%%%%%%%%%%%%

\section{Notation and preliminaries}
\label{sec:prelims}

\subsection{Drinfeld modules}
Throughout, we suppose that $q$ is a power of a prime, and that $K$ is a function field in one variable over $\FF_q$, that is, that $K=\FF_q(X)$ for some algebraic curve $X/\FF_q$.  We fix a place $\infty\in X(\FF_q)$, and let $A\subseteq K$ denote the ring of regular functions at $\infty$.  By an \emph{$A$-field}, we mean a field $L$ with a homomorphism $i:A\to L$, and we will consider only the case in which $L$ has \emph{generic characteristic}, that is, where $i$ is an injection; the typical example is where $L/K$ is a finite extension, and $i$ is the inclusion map.

If $L$ is an $A$-field, then a \emph{Drinfeld} $A$-module over $L$ is a homomorphism $\phi:A\to \mathrm{End}_L(\mathbb{G}_\mathrm{a})$, $a\mapsto \phi_a$, with the property that, for all $a\in A$,
\[\phi_a(x)=ax+O(x^q)\in L[x],\]
but $\phi_a(x)\neq ax$ for at least some $a\in A$.  For $a\in A$, we let $|a|_\infty=\#(A/a)$, and recall that Drinfeld \cite{drinfeld} proved the existence of an integer $r\geq 1$ such that $\deg(\phi_a(x))=|a|_\infty^r$, for all $a\in A$.  This quantity will be known as the \emph{rank} of $\phi$.  For convenience we will define $\deg:A\to \ZZ$ by $|a|_\infty=q^{\deg(a)}$, noting that this agrees with the usual definition in the case $A=\FF_q[T]$.  If $\mathfrak{a}\subseteq A$ is any ideal, then we will write $\phi[\mathfrak{a}]$ for the submodule of $\phi(\overline{L})$ consisting of elements annihilated by $\mathfrak{a}$.

Two Drinfeld modules $\phi/L$ and $\psi/L$ are said to be \emph{isomorphic} over the extension $E/L$ if and only if there exists an $\alpha\in E$ such that $\phi_a(\alpha x)=\alpha\psi_a(x)$ for all $a\in A$, abbreviated $\phi\alpha=\alpha\psi$.  Suppose that we have fixed a ordered set of generators $T_1, ..., T_m$ for $A$ as an $\FF_q$-algebra.  Then we have, for each $i$,
\[\phi_{T_i}(x)=T_i+a_{i, 1}x^q+\cdots +a_{i, \deg(T_i)r}x^{q^{\deg(T_i)r}},\]
with $a_{i, j}\in L$, and $\phi$ is entirely determined by these values.  By the \emph{$\vec{w}$-weighted projective space} $\PP^{\vec{w}}$, where $\vec{w}=(w_1, ..., w_{m+1})\in (\ZZ^+)^{m+1}$, we mean the quotient of $\AA^{m+1}\setminus\{(0, 0, ..., 0)\}$ under the $\mathbb{G}_\mathrm{m}(\overline{L})$ action
\[(x_1, x_2, ..., x_{m+1})\to(\alpha^{w_1}x_1, \alpha^{w_2}x_2, ..., \alpha^{w_{m+1}}x_{m+1}).\]
We warn the reader that, in general, points of $\PP^{\vec{w}}$ which are fixed by $\Gal(\overline{L}/L)$ do not necessarily have a representative with coordinates in $L$, unlike in the case of the usual projective space.  The following definition is also made in \cite{juliapaper}.
\begin{definition}
Fix a set of generators $T_1, ..., T_m$ for $A$ as an $\FF_q$-algebra, fix $r\geq 1$, and let
 $M_{A, r}$ denote the weighted projective space with coordinates $x_{i, j}$, for $1\leq i\leq m$ and $1\leq j\leq r\deg(T_i)$, such that $x_{i, j}$ is given weight $q^j-1$.  If $\phi/L$ is a Drinfeld $A$-module of rank $r\geq 1$, then by the \emph{$j$-invariant} of $\phi/L$, we mean the point
\[j_\phi=[a_{1, 1}, a_{1, 2}, ..., a_{1, \deg(T_1)r}, a_{2, 1}, ..., a_{2, \deg(T_2)r}, ..., a_{m, 1}, a_{m, 2}, ..., a_{m, \deg(T_m)r}]\]
 in $M_{A, r}(L)$.
 \end{definition}
If $A=\FF_q[T]$, and $\phi/L$ is the Drinfeld module of rank 2 defined by
\[\phi_T(x)=Tx+gx^q+\Delta x^{q^2},\]
then it is conventional to set $j=g^{q+1}/\Delta$.  We note that this is the image of the point $j_\phi$, as defined here, under the map $\PP^{(q-1, q^2-1)}\to \PP^1$ given by $[x:y]\mapsto [x^{q+1}:y]$.  So our $j$-invariant nearly exactly generalizes the conventional notion in the rank 2 case.
Note that our invariant is similar, but not identical, to the $j$-invariant defined by Potemine~\cite{potemine}. 
\begin{lemma}
Let $\phi/L$ and $\psi/L$ be two Drinfeld $A$-modules.  Then the following are equivalent:
\begin{enumerate}
\item $\phi$ and $\psi$ are $L^{\mathrm{sep}}$-isomorphic;
\item $\phi$ and $\psi$ are $\overline{L}$-isomorphic;
\item $j_\phi=j_\psi$.
\end{enumerate}
\end{lemma}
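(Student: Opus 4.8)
The plan is to reduce the lemma to an explicit computation with the coefficients of $\phi_{T_i}$ and $\psi_{T_i}$, and then to dispatch the three implications. Write $\phi_{T_i}(x)=T_ix+\sum_{j}a_{i,j}x^{q^{j}}$ and $\psi_{T_i}(x)=T_ix+\sum_{j}b_{i,j}x^{q^{j}}$. The basic observation is that, for a nonzero scalar $\alpha\in\overline{L}$, comparing the coefficient of $x^{q^{j}}$ on the two sides of $\phi_{T_i}(\alpha x)=\alpha\psi_{T_i}(x)$ yields exactly $b_{i,j}=\alpha^{q^{j}-1}a_{i,j}$; since $x_{i,j}$ carries weight $q^{j}-1$ in $M_{A,r}$, this is precisely the defining $\mathbb{G}_{\mathrm m}$-action on that weighted projective space. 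Since $L^{\mathrm{sep}}\subseteq\overline{L}$, the implication (i)$\Rightarrow$(ii) is automatic, so everything comes down to proving (ii)$\Leftrightarrow$(iii) and the descent (ii)$\Rightarrow$(i).

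For (ii)$\Leftrightarrow$(iii) I would first note that conjugation $f(x)\mapsto\alpha^{-1}f(\alpha x)$ by a nonzero scalar is a ring automorphism of $\mathrm{End}_{\overline{L}}(\mathbb{G}_{\mathrm a})$, so — using also the standard fact that $\phi$ and $\psi$ both restrict to the inclusion $\FF_q\hookrightarrow\overline{L}$ on $\FF_q\subseteq A$ — the relation $\phi_a(\alpha x)=\alpha\psi_a(x)$ holds for all $a\in A$ as soon as it holds for the generators $T_1,\dots,T_m$. Combined with the coefficient computation, this says that $\phi$ and $\psi$ are $\overline{L}$-isomorphic exactly when $(a_{i,j})$ and $(b_{i,j})$ represent the same point of $M_{A,r}(\overline{L})$; and since $j_\phi,j_\psi\in M_{A,r}(L)$ while the natural map $M_{A,r}(L)\to M_{A,r}(\overline{L})$ is injective (as $M_{A,r}$ is separated), this is equivalent to $j_\phi=j_\psi$.

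For the descent (ii)$\Rightarrow$(i), the key claim is that any $\alpha\in\overline{L}^{\times}$ with $\phi\alpha=\alpha\psi$ is automatically separable over $L$. Since $\psi$ is a genuine Drinfeld module one has $\psi_{T_{i_0}}(x)\neq T_{i_0}x$ for at least one generator $T_{i_0}$ (otherwise $\psi_a(x)=ax$ for every $a\in A$), so some $b_{i_0,j_0}$ is nonzero; then $a_{i_0,j_0}\alpha^{q^{j_0}-1}=b_{i_0,j_0}$ together with $\alpha\neq 0$ forces $a_{i_0,j_0}\neq 0$ and $\alpha^{q^{j_0}-1}=b_{i_0,j_0}/a_{i_0,j_0}\in L^{\times}$. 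Thus $\alpha$ is a root of $X^{q^{j_0}-1}-c$ for some $c\in L^{\times}$, which is separable over $L$: writing $q=p^{e}$ with $p$ prime, its derivative $(q^{j_0}-1)X^{q^{j_0}-2}=-X^{q^{j_0}-2}$ shares no root with $X^{q^{j_0}-1}-c$ since $0$ is not a root of the latter. Hence $\alpha\in L^{\mathrm{sep}}$, and the given $\overline{L}$-isomorphism already takes place over $L^{\mathrm{sep}}$. I expect this last point — that the descent from $\overline{L}$ to $L^{\mathrm{sep}}$ is automatic — to be the only step with genuine content; it reduces to the elementary fact that $X^{n}-c$ is separable in characteristic $p$ whenever $p\nmid n$, applied with $n=q^{j_0}-1$ (and $p\mid q$, so $p\nmid q^{j_0}-1$). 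Everything else is routine bookkeeping with the weighted coordinates.
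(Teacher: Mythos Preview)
Your proof is correct and follows essentially the same approach as the paper: both reduce to the coefficient identity $b_{i,j}=\alpha^{q^{j}-1}a_{i,j}$, and both obtain the descent to $L^{\mathrm{sep}}$ by observing that $\alpha$ satisfies a polynomial $X^{q^{j}-1}-c$ with $c\in L^{\times}$, which is separable since $p\nmid q^{j}-1$. The only differences are organizational---the paper runs the cycle (i)$\Rightarrow$(ii)$\Rightarrow$(iii)$\Rightarrow$(i) whereas you prove (ii)$\Leftrightarrow$(iii) and (ii)$\Rightarrow$(i)---and you spell out the separability check and the reduction to generators in more detail than the paper does.
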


\begin{proof}
Since $L^{\mathrm{sep}}\subseteq\overline{L}$, it is clear that condition (1) implies condition (2).  Suppose that (2) is true, and that $\alpha\in\overline{L}$ satisfies $\alpha\psi=\phi\alpha$.  Then if
\[\phi_{T_i}(x)=T_ix+a_{i, 1}x^q+\cdots +a_{i, \deg(T_i)r}x^{q^{\deg(T_i)r}}\]
and
\[\psi_{T_i}(x)=T_ix+b_{i, 1}x^q+\cdots +b_{i, \deg(T_i)r}x^{q^{\deg(T_i)r}},\]
one checks easily that  $b_{i, j}=\alpha^{q^j-1}a_{i, j}$ for all $i$ and $j$.
This is precisely what it means for two points in $\PP^{\vec{w}}$ to be equal.

Finally, suppose that $j_\phi=j_\psi$.  Then if the $a_{i, j}$ and $b_{i, j}$ are defined as above,  there exists an $\alpha\in\overline{L}^*$ such that $a_{i, j}=\alpha^{q^j-1}b_{i, j}$, for all $i$ and $j$.  Choosing some $i$ and  $j$ with $b_{j, i}\neq 0$, we see that $\alpha$ is a root of the polynomial $b_{i, j}X^{q^j-1}-a_{i, j}\in L[X]$, and so in particular $\alpha\in L^{\mathrm{sep}}$.  One now confirms easily that $\alpha\psi_{a}(x)=\phi_{a}(\alpha x)$ for all $a\in \{T_1, ..., T_m\}$, and hence the same is true for all $a\in A$, since $T_1, ..., T_m$ generate $A$ over $\FF_q$.  It follows that $\alpha \psi=\phi\alpha$, and hence that $\phi$ and $\psi$ are $L^{\mathrm{sep}}$-isomorphic.
\end{proof}

\subsection{Heights and valuations}

Throughout, we will make the convention of normalizing logarithms so that $\log q=1$, and write $\log^+ x$ for $\max\{0, \log x\}$.
Let $L$ be a \emph{global function field}, by which we mean a field equipped with a set of non-trivial, non-archimedean absolute values $M_L$ satisfying the product formula:
\[\sum_{v\in M_L}\log|x|_v=0\]
for all $x\in L^*$.  If $L'/L$ is a finite extension, we normalize the elements of $M_{L'}$ so that if $w\in M_L$ is the place below $v\in M_{L'}$, then  $|x|_v=|x|_w$ for all $x\in L$.  A finite extension $L'/L$ will be called \emph{reasonable} if
\[\sum_{\substack{w\in M_{L'}\\w\mid v}}[L'_v:L_v]=[L':L]\]
for every place $v\in M_{L'}$, and we note that every separable extension is reasonable (see \cite[p.~13]{localfields}).  Since composita of reasonable extensions are again reasonable, we will define the \emph{reasonable closure} $\reas{L}$ to be the union of all finite reasonable extensions of $L$.  Note that $L^\mathrm{sep}\subseteq \reas{L}\subseteq \overline{L}$.  The typical case will be that in which $L/K$ is a finite extension, in which case we have $\reas{L}=\overline{L}$.

Fixing a \emph{ground field} $L$, if $L'/L$ is a finite reasonable extension, then for any $x\in L'$, we will define the \emph{height} of $x$ to be
\[h(x)=\sum_{v\in M_{L'}}\frac{[L'_v:L_v]}{[L':L]}\log^+|x|_v,\]
where $L_v$, as usual, denotes the completion of $L$ at $v$.  Note that $h$ extends to a well-defined function $h:\reas{L}\to \RR$.

 More generally, if $\PP^{\vec{w}}$ is the weighted projective space with weights $\vec{w}=(w_0, ..., w_N)$ (all non-zero), then we define a height on $\PP^{\vec{w}}$ by
\[h([x_0:\cdots:x_N])=\sum_{v\in M_{L'}}\frac{[L'_v:L_v]}{[L':L]}\log\max\{|x_0|_v^{1/w_0},\cdots |x_1|_v^{1/w_N}\}.\]
Note that, considering the morphism $\Phi:\PP^{\vec{w}}\to \PP^{N}$ given by
\[\Phi([x_0:x_1:\cdots :x_N])=[x_0^{w_1w_2\cdots w_N}:x_1^{w_0w_2\cdots w_N}:\cdots :x_N^{w_0w_1\cdots w_{N-1}}],\]
we see that sets of bounded height in $\PP^{\vec{w}}(L)$ map to sets of bounded height in $\PP^N(L)$, and hence are finite.
In particular, the standard Northcott finiteness property for $\PP^N$ yields the following useful observation.
\begin{proposition}
For any $B\geq 1$, there are only finite many $\overline{L}$-isomorphism classes of $\phi/L$ with $h(j_\phi)\leq B$.
\end{proposition}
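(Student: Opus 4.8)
The plan is to reduce the statement about weighted projective space to the classical Northcott property on $\PP^N$, which the paper has already set up via the explicit morphism $\Phi:\PP^{\vec w}\to\PP^N$. First I would recall the content of the preceding Lemma: two Drinfeld $A$-modules $\phi/L$, $\psi/L$ are $\overline L$-isomorphic if and only if $j_\phi=j_\psi$ as points of the weighted projective space $M_{A,r}$. Consequently the map $\phi\mapsto j_\phi$ induces an injection from the set of $\overline L$-isomorphism classes of rank-$r$ Drinfeld modules over $L$ into $M_{A,r}(L)$. So it suffices to show that the set $\{\,P\in M_{A,r}(L): h(P)\leq B\,\}$ is finite.

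Next I would invoke the observation made just before the statement: composing with the degree-raising morphism $\Phi:\PP^{\vec w}\to\PP^N$ (with $N+1$ the number of coordinates of $M_{A,r}$, i.e. $N=\sum_i r\deg(T_i)-1$), a set of bounded height in $M_{A,r}(L)=\PP^{\vec w}(L)$ is carried into a set of bounded height in $\PP^N(L)$. Here one needs only the elementary fact that $h(\Phi(P))$ is controlled by a linear function of $h(P)$ — indeed $h(\Phi(P))=\bigl(\textstyle\prod_i w_i\bigr)\,h(P)$ up to the harmless bookkeeping of the weighted-height normalization — so $h(P)\leq B$ implies $h(\Phi(P))\leq C(B,\vec w)$ for an explicit constant. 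Then the classical Northcott finiteness theorem for $\PP^N$ over the global function field $L$ (finitely many points of bounded height defined over a fixed field) finishes the argument: the image is finite, hence, since $\Phi$ has finite fibres on $L$-points, the preimage in $M_{A,r}(L)$ is finite, and therefore there are only finitely many $\overline L$-isomorphism classes with $h(j_\phi)\leq B$.

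The only genuine subtlety — and it is minor — is that a $\Gal(\overline L/L)$-fixed point of weighted projective space need not have a representative with coordinates in $L$, a point the paper explicitly flags. But this causes no trouble here: the $j$-invariant of a Drinfeld module defined over $L$ visibly does have a representative with coordinates in $L$ (namely the tuple of actual coefficients $a_{i,j}\in L$), so we are genuinely working inside $M_{A,r}(L)$ with honest $L$-rational coordinates, and $\Phi$ maps these to honest $L$-rational points of $\PP^N$. Thus the main (and essentially the only) step is the reduction via $\Phi$; everything else is the cited Lemma plus standard Northcott. If one wanted the finiteness of \emph{$L$-isomorphism} classes rather than $\overline L$-isomorphism classes, one would additionally have to bound the twists, which does not follow from $h(j_\phi)\leq B$ alone — but that is not what is being asserted, so no such argument is needed.
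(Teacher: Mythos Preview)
Your proposal is correct and follows exactly the approach the paper takes: the paper states the proposition as an immediate consequence of the observation (made just before it) that sets of bounded height in $\PP^{\vec w}(L)$ map via $\Phi$ into sets of bounded height in $\PP^N(L)$, together with the preceding Lemma identifying $\overline{L}$-isomorphism classes with points $j_\phi\in M_{A,r}$. Your write-up is slightly more detailed than the paper's (you make explicit the finite-fibres step and address the $L$-rational representative subtlety), but the content is the same.
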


%%%%%%%%%%%%%%%%%%%%%%%%%%%%%%%%%%%%%%%%%%%%%%%%%
%%%%%%%%%%%%%%%%%%%%%%%%%%%%%%%%%%%%%%%%%%%%%%%%%
%%%%%%%%%%%%%%%%%%%%%%%%%%%%%%%%%%%%%%%%%%%%%%%%%
%%%%%%%%%%%%%%%%%%%%%%%%%%%%%%%%%%%%%%%%%%%%%%%%%
%%%%%%%%%%%%%%%%%%%%%%%%%%%%%%%%%%%%%%%%%%%%%%%%%
%%%%%%%%%%%%%%%%%%%%%%%%%%%%%%%%%%%%%%%%%%%%%%%%%

\section{Drinfeld modules over local fields}
\label{sec:localfields}

\subsection{Green's functions}
Let $L$ be a complete local field, equipped with the non-trivial non-archimedean absolute value $|\cdot|$.  To avoid confusion, we will denote by $|\cdot|_\infty$ the absolute value on $A\subseteq K$ corresponding to the place at $\infty$.  For any polynomial
\[f(x)=a_0+a_1x+\cdots +a_dx^d\in L[x],\]
where $d\geq 2$ and $a_d\neq 0$, we define the \emph{Green's function associated to $f$} to be the real-valued function  defined by
\begin{equation}\label{eq:greendef}G_f(x)=\lim_{N\to\infty}d^{-N}\log^+|f^N(x)|.\end{equation}
The following lemma is standard.
\begin{lemma}\label{lem:greensprops}
Let $f(x)\in L[x]$ be a polynomial as above, and let $G_f$ be the Green's function associated to $f$.  Then
\begin{enumerate}
\item $G_f:L\to \RR$ is everywhere defined and non-negative;
\item $G_f(f(x))=\deg(f)G_f(x)$ for all $x\in L$;
\item $G_f(x)=\log|x|+\frac{1}{d-1}\log|a_d|$ whenever
\begin{equation}\label{eq:greenbasin}|x|>\log\max\left\{|a_d|^{-1/(d-1)}, |a_i/a_d|^{1/(d-i)}\right\}.\end{equation}
\end{enumerate}
\end{lemma}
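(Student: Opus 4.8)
The plan is to treat the three parts in the order (iii), (ii), (i), since an explicit description of $G_f$ on a neighbourhood of infinity is what drives everything else. First I would establish the ``escape estimate'': set $R=\max\{|a_d|^{-1/(d-1)},\ |a_i/a_d|^{1/(d-i)}\}$ and show that if $|x|>R$ then $|f(x)|=|a_d||x|^d$ (the leading term strictly dominates, since $|a_ix^i|<|a_dx^d|$ for $i<d$ follows from $|x|>|a_i/a_d|^{1/(d-i)}$), and moreover $|f(x)|>|x|$ (which follows from $|a_d||x|^{d-1}>1$, i.e. $|x|>|a_d|^{-1/(d-1)}$). Hence the region $|x|>R$ is forward-invariant under $f$, and on it the orbit satisfies $|f^N(x)|=|a_d|^{1+d+\cdots+d^{N-1}}|x|^{d^N}$. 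Taking $d^{-N}\log$ of this and letting $N\to\infty$, the geometric series $\sum_{k\ge 0}d^{-k-1}=\frac{1}{d-1}$ gives exactly $G_f(x)=\log|x|+\frac{1}{d-1}\log|a_d|$, which is part (iii). (I note in passing that the inequality displayed as \eqref{eq:greenbasin} in the statement should presumably read $|x|>R$ rather than $|x|>\log R$; I would state it in the corrected form.)

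Next, for part (i), I would show the limit in \eqref{eq:greendef} exists for every $x\in L$. If the forward orbit of $x$ ever enters the region $|x|>R$, then by forward-invariance it stays there, and the sequence $d^{-N}\log^+|f^N(x)|$ is eventually given by the explicit formula above composed with a fixed power of $f$, hence converges; in fact $d^{-N}\log^+|f^N(x)|$ is eventually of the form $d^{-N_0}\big(G_f(f^{N_0}(x))\big)$ for the first entry time $N_0$, using part (ii) informally, so one gets convergence directly. If the orbit never enters $|x|>R$, then $|f^N(x)|\le R$ for all $N$, so $\log^+|f^N(x)|\le \log^+R$ is bounded and $d^{-N}\log^+|f^N(x)|\to 0$; thus $G_f(x)=0$ in that case. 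Either way the limit exists and is $\ge 0$, giving (i). For part (ii), the functional equation is formal once existence is known: $G_f(f(x))=\lim_N d^{-N}\log^+|f^N(f(x))|=\lim_N d^{-N}\log^+|f^{N+1}(x)| = d\cdot\lim_N d^{-(N+1)}\log^+|f^{N+1}(x)| = d\,G_f(x)$, where re-indexing $M=N+1$ is harmless because the limit exists.

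The only genuinely delicate point is the bookkeeping in the ultrametric ``leading term dominates'' step and making sure the two conditions defining $R$ are each used for the right conclusion (one for $|f(x)|=|a_d||x|^d$, the other for forward-invariance $|f(x)|>|x|$); everything after that is a convergent geometric series and re-indexing of limits. I would also remark that the hypotheses $d\ge 2$ and $a_d\ne 0$ are exactly what is needed: $a_d\ne 0$ to speak of the leading coefficient, and $d\ge 2$ so that $\frac{1}{d-1}$ makes sense and the normalization $d^{-N}$ genuinely contracts. Since the lemma is labelled standard, I would keep the write-up brief and cite the analogous one-variable dynamical Green's function construction (as in the potential-theory literature over non-archimedean fields) rather than belabour it.
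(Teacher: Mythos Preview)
Your proposal is correct and follows essentially the same route as the paper: establish (iii) first by showing that the condition on $|x|$ forces $|f(x)|=|a_d x^d|>|x|$, deduce existence of the limit everywhere by splitting into the cases where the orbit eventually escapes (explicit value) versus remains bounded (limit zero), and then read off (ii) by re-indexing. Your observation that \eqref{eq:greenbasin} should read $|x|>\max\{\cdots\}$ rather than $|x|>\log\max\{\cdots\}$ is also correct.
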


\begin{proof}
The non-negativity of $G_f$, where it is defined, follows simply from the fact that it is a pointwise limit of non-negative functions.
It is easy to check statement (iii), since the condition \eqref{eq:greenbasin} ensures that $|f(x)|=|a_dx^d|>|x|$.  The fact that the limit \eqref{eq:greendef} exists for all $x$ is now straightforward, since if the condition \eqref{eq:greenbasin} occurs for $f^N(x)$, for any $N$, we have an explicit value for the limit, and otherwise it is clear that $G_f(x)=0$.  Part (ii) follows directly from the definition, once we know that the limit always exists.
\end{proof}

Now let $L$ be a complete local $A$-field, and let $\phi/L$ be a Drinfeld $A$-module of rank $r$.
  Then for each non-constant $a\in A$, we have $\deg(\phi_a)=|a|_\infty^r\geq 2$, and so we may associate to the polynomial $\phi_a(x)$ a Green's function $G_{\phi_a}$.
  The following lemma is well-known, although we include a short proof for the convenience of the reader.  It essentially Proposition~3 of \cite{poonen}, in different notation, or the well-known fact in holomorphic dynamics that commuting maps have identical Green's functions.

\begin{lemma}
For any non-constant $a, b\in A$, we have $G_{\phi_a}=G_{\phi_b}$.
\end{lemma}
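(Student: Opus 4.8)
The plan is to exploit the fact that $\phi_a$ and $\phi_b$ commute as self-maps of $\mathbb{G}_\mathrm{a}$, and to combine this with the essential uniqueness of a function satisfying the functional equation of Lemma~\ref{lem:greensprops}(ii) up to a bounded error. First I would record the commutativity: since $\phi$ is a ring homomorphism into a commutative ring and $ab=ba$ in $A$, we have $\phi_a\circ\phi_b=\phi_{ab}=\phi_{ba}=\phi_b\circ\phi_a$.

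Next I would check that $x\mapsto G_{\phi_a}(x)-\log^+|x|$ is bounded on all of $L$ (and likewise for $b$). For $|x|$ beyond the threshold appearing in \eqref{eq:greenbasin} this difference equals the constant $\tfrac{1}{\deg(\phi_a)-1}\log|a_d|$ by Lemma~\ref{lem:greensprops}(iii), where $a_d$ is the leading coefficient of $\phi_a$; on the complementary bounded region one has either $G_{\phi_a}(x)=0$ (when the orbit never escapes) or a value controlled by a single application of the leading term once the orbit first exits, and in either case the difference is bounded by a constant depending only on $\phi$ and $a$. I would not belabour this routine estimate.

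Now set $H(x):=G_{\phi_a}(\phi_b(x))-\deg(\phi_b)\,G_{\phi_a}(x)$. From the previous step, together with $\log^+|\phi_b(x)|=\deg(\phi_b)\log^+|x|+O(1)$ for large $|x|$ and the boundedness of $\phi_b$ on bounded sets, $H$ is a bounded function on $L$. On the other hand, using $\phi_b\circ\phi_a=\phi_a\circ\phi_b$ and Lemma~\ref{lem:greensprops}(ii) applied to $\phi_a$ (whose degree $\deg(\phi_a)=|a|_\infty^r$ is at least $2$),
\[H(\phi_a(x))=G_{\phi_a}\big(\phi_a(\phi_b(x))\big)-\deg(\phi_b)\,G_{\phi_a}(\phi_a(x))=\deg(\phi_a)\big(G_{\phi_a}(\phi_b(x))-\deg(\phi_b)\,G_{\phi_a}(x)\big)=\deg(\phi_a)\,H(x).\]
Iterating gives $|H(x)|=\deg(\phi_a)^{-N}|H(\phi_a^N(x))|\to 0$, so $H\equiv 0$; that is, $G_{\phi_a}(\phi_b(x))=\deg(\phi_b)\,G_{\phi_a}(x)$ for all $x$, so $G_{\phi_a}$ obeys, with respect to $\phi_b$, precisely the homogeneity that $G_{\phi_b}$ obeys. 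Finally, $D:=G_{\phi_a}-G_{\phi_b}$ is bounded by the second step, and $D(\phi_b(x))=\deg(\phi_b)\,D(x)$ by the identity just displayed together with Lemma~\ref{lem:greensprops}(ii) for $\phi_b$; since $\deg(\phi_b)\ge 2$, the same iteration argument forces $D\equiv 0$, i.e.\ $G_{\phi_a}=G_{\phi_b}$.

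The only delicate point I anticipate is the second step: one genuinely needs $G_{\phi_a}(x)-\log^+|x|$ bounded uniformly over all of $L$, not merely for $|x|$ large, because the ``bounded and homogeneous, hence zero'' mechanism driving the last two steps fails if the error is only asymptotic. Everything else is a direct manipulation of the definitions and of Lemma~\ref{lem:greensprops}.
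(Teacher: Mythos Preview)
Your proof is correct and follows essentially the same approach as the paper: both exploit commutativity of $\phi_a$ and $\phi_b$, the bounded difference $G_{\phi_a}(x)-\log^+|x|=O(1)$, and the functional equation to first establish $G_{\phi_a}(\phi_b(x))=\deg(\phi_b)\,G_{\phi_a}(x)$ and then conclude $G_{\phi_a}=G_{\phi_b}$. The only cosmetic difference is that the paper carries out the limit computations directly from the definition of $G$, whereas you package the same telescoping as the principle ``bounded and homogeneous under a map of degree $\ge 2$ implies identically zero.''
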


\begin{proof}
First note that there exists a constant $C$, depending on $b$, such that
\[ \deg(\phi_b)\log^+|x|-C\leq \log^+|\phi_b(x)|_v\leq \deg(\phi_b)\log^+|x|+C.\]
Since $\phi_a$ and $\phi_b$ commute, it follows that
\begin{eqnarray*}
G_{\phi_a}(\phi_b(x))&=&\lim_{N\to\infty}\deg(\phi_a)^{-N}\log^+|\phi_a^N(\phi_b(x))|\\
&=&\lim_{N\to\infty}\deg(\phi_a)^{-N}\log^+|\phi_b(\phi_a^N(x))|\\
&=&\lim_{N\to\infty}\deg(\phi_a)^{-N}\deg(\phi_b)\log^+|\phi_a^N(x)|\\
&=&\deg(\phi_b)G_{\phi_a}(x).
\end{eqnarray*}
But $G_{\phi_a}(x)$ and $\log^+|x|$ also differ only by a bounded amount, by the previous lemma, and so we also have
\begin{eqnarray*}
G_{\phi_b}(x)&=&\lim_{N\to\infty}\deg(\phi_b)^{-N}\log^+|\phi_b^N(x)|\\
&=&\lim_{N\to\infty}\deg(\phi_b)^{-N}G_{\phi_a}(\phi_b^N(x))\\
&=&\lim_{N\to\infty}\deg(\phi_b)^{-N}\deg(\phi_b)^NG_{\phi_a}(x)\\
&=&G_{\phi_a}(x).
\end{eqnarray*}
\end{proof}

We now know that the Green's function $G_{\phi_T}$ is independent of the choice of $T\in A\setminus\FF_q$, and so we may define the \emph{Green's function associated to $\phi$} to be the function $G_{\phi}=G_{\phi_T}$, for any non-constant $T\in A$.    One important corollary of this is that the quantity
\[\lim_{|x|\to\infty}\left(\log|x|-G_{\phi_T}(x)\right)=\frac{1}{q^{r\deg(T)}-1}\log|a_{r\deg(T)}^{-1}|,\]
which would initially appear to depend on the choice of $T\in A\setminus\FF_q$, is in fact independent of this choice.  We will denote this quantity by $c(\phi)$.  Note that Theorem~4.1 of \cite{baker-hsia} shows that $c(\phi)$ is the logarithmic transfinite diameter of filled Julia set of $\phi/\CC_v$, where $\CC_v$ is the completion of the algebraic closure of $L$, which is also the logarithmic capacity of the Berkovich filled Julia set (see \cite{juliapaper} for more details).

We note two properties, also present in \cite{poonen}, which are easy to prove from the definitions.

\begin{lemma}\label{lem:greenspropertiesII}
For all $x, y\in L$ and all $a\in A$, we have
\[G_\phi(x+y)\leq \max\{G_\phi(x), G_\phi(y)\},\]
with equality unless $G_\phi(x)=G_\phi(y)$,
and
\[G_\phi(\phi_a(x))=|a|_\infty^rG_\phi(x).\]
\end{lemma}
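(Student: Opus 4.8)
The plan is to verify both statements directly from the definition of $G_\phi$ as the limit in \eqref{eq:greendef}, using the additive structure of the polynomial $\phi_a$, rather than trying to reduce to the elliptic-curve analogue. First I would fix a non-constant $T\in A$ and work with $f=\phi_T$, recalling that $f$ is an $\FF_q$-linear (indeed additive) polynomial, so $f(x+y)=f(x)+f(y)$, and hence $f^N(x+y)=f^N(x)+f^N(y)$ for every iterate. Since the absolute value is non-archimedean, $\log^+|f^N(x+y)|=\log^+|f^N(x)+f^N(y)|\leq\max\{\log^+|f^N(x)|,\log^+|f^N(y)|\}$, and multiplying by $\deg(f)^{-N}$ and passing to the limit gives $G_\phi(x+y)\leq\max\{G_\phi(x),G_\phi(y)\}$ at once.

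For the ``equality unless'' clause, I would use the standard ultrametric fact that $|u+v|=\max\{|u|,|v|\}$ whenever $|u|\neq|v|$. The issue is that $G_\phi(x)\neq G_\phi(y)$ does not immediately force $|f^N(x)|\neq|f^N(y)|$ for every $N$; however it does force it for all sufficiently large $N$, since if $\log^+|f^N(x)|=\log^+|f^N(y)|$ along a subsequence then dividing by $\deg(f)^N$ and taking limits would give $G_\phi(x)=G_\phi(y)$. So choose $N_0$ with $\log^+|f^N(x)|\neq\log^+|f^N(y)|$ for $N\geq N_0$; one then has to be slightly careful that $\log^+$ rather than $\log$ appears, but if say $G_\phi(x)>G_\phi(y)\geq 0$ then for large $N$ we have $|f^N(x)|>1$ and $|f^N(x)|>|f^N(y)|$, whence $|f^N(x+y)|=|f^N(x)|$ by ultrametricity, and $\log^+|f^N(x+y)|=\log|f^N(x)|$ exactly; dividing by $\deg(f)^N$ and taking the limit yields $G_\phi(x+y)=G_\phi(x)=\max\{G_\phi(x),G_\phi(y)\}$.

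For the scaling property, I would observe that $\phi_a$ and $\phi_T$ commute, so exactly the computation already carried out in the proof that $G_{\phi_a}=G_{\phi_b}$ applies: $G_\phi(\phi_a(x))=\lim_N\deg(\phi_T)^{-N}\log^+|\phi_T^N(\phi_a(x))|=\lim_N\deg(\phi_T)^{-N}\log^+|\phi_a(\phi_T^N(x))|$, and since $\log^+|\phi_a(z)|=\deg(\phi_a)\log^+|z|+O(1)$ this equals $\deg(\phi_a)\,G_\phi(x)=|a|_\infty^r\,G_\phi(x)$, using $\deg(\phi_a)=|a|_\infty^r$. I do not expect any serious obstacle here; the only point requiring a moment's care is the ``equality unless'' clause, where one must pass from an inequality of $G$-values to an eventual strict inequality of absolute values of iterates and handle the $\log^+$ truncation, but as sketched this is routine.
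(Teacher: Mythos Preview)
Your proposal is correct. The paper does not actually supply a proof of this lemma; it simply remarks that the two properties ``are easy to prove from the definitions'' and cites Poonen, so your direct verification from the limit definition is exactly in the spirit of what the paper intends.

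One small simplification for the second identity: since the paper has already established $G_{\phi}=G_{\phi_a}$ for every non-constant $a\in A$, you can invoke Lemma~\ref{lem:greensprops}(ii) directly to get $G_\phi(\phi_a(x))=G_{\phi_a}(\phi_a(x))=\deg(\phi_a)\,G_{\phi_a}(x)=|a|_\infty^r G_\phi(x)$, avoiding the need to redo the commutativity argument. Your handling of the ``equality unless'' clause is the only place requiring care, and your argument there is sound: the key point that $G_\phi(x)>G_\phi(y)\geq 0$ forces $G_\phi(x)>0$, hence $|f^N(x)|>1$ and $|f^N(x)|>|f^N(y)|$ for all large $N$, is exactly what is needed.
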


Before proceeding to our discussion of local heights, we introduce two more pieces of notation, which also appear in \cite{juliapaper}.  Fix a basis $T_1, ..., T_m$ for $A$ as an $\FF_q$-algebra, as above.  For a Drinfeld module $\phi/L$, let $a_{i, j}\in L$ be defined by
\[\phi_{T_i}(x)=T_ix+a_{i, 1}x^q+\cdots+a_{i, r\deg(T_i)}x^{q^{t\deg(T_i)}},\]
for each $i$, and set
\[j_{\phi, v}=\max_{\substack{1\leq i\leq m\\1\leq j\leq r\deg(T_i)}}\left\{\frac{1}{q^j-1}\log|a_{i, j}|\right\}+c(\phi).\]
In the context of a global function field, $j_{\phi, v}$ will be the local contribution to the height of $j_\phi$.  We note, as proven in \cite{juliapaper}, that $j_{\phi, v}$ is non-negative, and that for finite places $v$ we have $j_{\phi, v}=0$ if and only if $\phi$ has potentially good reduction at $v$.
Similarly, for each non-constant $T\in A$, we define $j_{\phi_T, v}$ by
\[j_{\phi_T, v}=\max_{1\leq j\leq r\deg(T)}\left\{\frac{1}{q^j-1}\log|a_{j}|\right\}+c(\phi).\]
Finally, for any $\phi/L$ we define $B_T>0$ by
\[\log B_T=\log\max\{|\xi|:\xi\in\phi[T]\}+\frac{1}{q^{r\deg(T)}-1}\log^+|T^{-1}|.\]
We note that, by Lemma~\ref{lem:greensprops}, if $|x|>B_T$, we have $G_\phi(x)=\log|x|-c(\phi)$.

We end this section with a convenient lemma from \cite{juliapaper}.  In this lemma, and the next section, we will say that the valuation $v$ on $L$ is a \emph{finite place} if the images of the elements of $A$ in $L$ are all $v$-integral, and we will call $v$ an \emph{infinite place} otherwise.
\begin{lemma}
If $v$ is a finite place, then $j_{\phi, v}=j_{\phi_T, v}$, for any non-constant $T\in A$. 
\end{lemma}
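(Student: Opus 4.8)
My approach would be to compute the local quantities $j_{\phi_S, v}$, for every non-constant $S \in A$ simultaneously, on a single well-chosen model of $\phi$. For a non-constant $S \in A$ write $\phi_S(x) = Sx + a_1^{(S)}x^q + \cdots + a_{r\deg S}^{(S)}x^{q^{r\deg S}}$ and set $\mu^{(S)}(\phi) = \max_{1 \le j \le r\deg S} \frac{1}{q^j-1}\log|a_j^{(S)}|$, so that $j_{\phi_S, v} = \mu^{(S)}(\phi) + c(\phi)$. Taking the maximum that defines $j_{\phi, v}$ first over $j$ and then over $i$, and pulling the constant $c(\phi)$ through, gives $j_{\phi, v} = \max_{1 \le i \le m} j_{\phi_{T_i}, v}$ for free. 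Since $c(\phi)$ is already known to be independent of the chosen non-constant element of $A$, it suffices to prove that $\mu^{(S)}(\phi)$ is independent of $S$; then $j_{\phi_S, v}$ is independent of $S$ and in particular equals $\max_i j_{\phi_{T_i}, v} = j_{\phi, v}$.

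To see this, I would first observe that $\mu^{(S)}(\phi)$ and $c(\phi)$ depend only on $|\cdot|_v$, so we may enlarge $L$ to $\CC_v$; let $R$ denote the valuation ring of $\CC_v$. Put $\mu = \max_{i, j} \frac{1}{q^j-1}\log|a_{i, j}|$ (finite, since not every $a_{i, j}$ vanishes), choose $\alpha \in \CC_v^*$ with $\log|\alpha| = -\mu$ (possible, as $\CC_v$ has divisible value group), and let $\psi = \alpha^{-1}\phi\alpha$, i.e. $\psi_a(x) = \alpha^{-1}\phi_a(\alpha x)$. Under such an isomorphism the degree-$q^j$ coefficient of $\psi_a$ is $\alpha^{q^j-1}$ times that of $\phi_a$, for every $a \in A$; hence every coefficient of every $\psi_{T_i}$ lies in $R$, and at least one of them --- in some degree $q^j$ with $j \ge 1$ --- is a unit of $R$. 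Because $R$ is a ring containing $\FF_q$ and the $T_i$ generate $A$ over $\FF_q$, every $\psi_a$ ($a \in A$) then has all coefficients in $R$, so reducing modulo the maximal ideal of $R$ yields a Drinfeld $A$-module $\overline{\psi}$ over the residue field (itself an $A$-field, since $i(A) \subseteq R$ at a finite place), of some rank $r'$ with $1 \le r' \le r$ --- at least $1$ because the reduction of that one $\psi_{T_i}$ is not the scalar map, and at most $r$ since $\psi$ has rank $r$.

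Now fix any non-constant $S \in A$. Since reduction commutes with the ring operations, the reduction of the polynomial $\psi_S$ is $\overline{\psi}_S$, which has degree $q^{r'\deg S}$ by Drinfeld's theorem applied to $\overline{\psi}$. Thus the coefficient of $x^{q^{r'\deg S}}$ in $\psi_S$ is a unit of $R$, while all coefficients of $\psi_S$ lie in $R$; hence $\mu^{(S)}(\psi) = 0$. Applying the coefficient transformation rule once more, $\mu^{(S)}(\phi) = \mu^{(S)}(\psi) - \log|\alpha| = \mu$ for every non-constant $S \in A$, which is exactly the required independence.

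The step that I expect to carry the weight is the construction in the second paragraph: one must see that the single rescaling $x \mapsto \alpha x$ with $\log|\alpha| = -\mu$ at once renders every defining coefficient $v$-integral and leaves the reduction a genuine Drinfeld module of positive rank, after which the vanishing of each $\mu^{(S)}(\psi)$ is forced by the rank $r'$ of that reduction. The identity $j_{\phi, v} = \max_i j_{\phi_{T_i}, v}$ and the manipulations with the transformation rule are, by contrast, routine.
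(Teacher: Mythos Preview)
Your argument is correct. The paper itself does not prove this lemma in-line; it merely remarks that the proof ``follows from an argument similar to that giving the Gauss Lemma'' and defers to \cite{juliapaper}. Your proof is a clean, self-contained realization of that hint: the Gauss-lemma content is exactly the step where integrality of the coefficients of $\psi_{T_1},\ldots,\psi_{T_m}$ propagates to all $\psi_a$ via the ring operations in $R\{\tau\}$, and the extra ingredient you add---that the resulting reduction $\overline{\psi}$ is a genuine Drinfeld module of some rank $r'\ge 1$, so that by Drinfeld's theorem $\overline{\psi}_S$ has degree $q^{r'\deg S}$ and hence $\psi_S$ has a unit coefficient in that degree---is precisely what forces $\mu^{(S)}(\psi)=0$ for \emph{every} non-constant $S$. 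The passage to $\CC_v$ to obtain $\alpha$ with $\log|\alpha|=-\mu$ is harmless since $\mu\in\QQ\cdot\deg(v)$ and the value group of $\CC_v$ is divisible; and the finiteness of $v$ is used exactly where you say, to ensure $i(A)\subseteq R$ so that the residue field is an $A$-field and $\overline{\psi}$ makes sense as a Drinfeld module. Nothing is missing.
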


\begin{proof}
The proof of this fact, which follows from an argument similar to that giving the Gauss Lemma, is found in \cite{juliapaper}.
\end{proof}

%%%%%%%%%%%%%%%%%%%%%%%%%%%%%%%%%%%
%%%%
%%%%%%%%%%%%%%%%%%%%%%%%%%%%%%%%%%%
%%%%
%%%%%%%%%%%%%%%%%%%%%%%%%%%%%%%%%%%
%%%%
%%%%%%%%%%%%%%%%%%%%%%%%%%%%%%%%%%%
%%%%
%%%%%%%%%%%%%%%%%%%%%%%%%%%%%%%%%%%
%%%%
%%%%%%%%%%%%%%%%%%%%%%%%%%%%%%%%%%%
%%%%
%%%%%%%%%%%%%%%%%%%%%%%%%%%%%%%%%%%
%%%%
%%%%%%%%%%%%%%%%%%%%%%%%%%%%%%%%%%%

\subsection{The (local) minimal discriminant}
In this section we define a quantity $\Disc_{\phi, v}$, which will become the local contribution to the minimal discriminant $\Disc_{\phi/L}$ in Section~\ref{sec:global}.  We let $L$ be a complete, local $A$-field with valuation $v$, and assume additionally that the ring $\Ocal\subseteq L$ of integral elements is a normalized discrete valuation ring.  As usual, we will let $\CC_v\supseteq L$ be the completion of the algebraic closure of $L$.
\begin{definition}
We say that a Drinfeld module $\psi/L$ is an \emph{integral model of} $\phi/L$ if $\psi$ is $L$-isomorphic to $\phi$, and if $\psi_a(x)\in \Ocal[x]$ for every $a\in A$.  We define $\Disc_{\phi, v}$ by
\[\Disc_{\phi, v}=\min\{c(\psi):\psi/L\text{ is an integral model of }\phi\}.\]
We will call $\psi/L$ a \emph{minimal model} of $\phi/L$ if it is an integral model, and $\Disc_{\phi/L}=c(\psi)$.
\end{definition}
Note that, if $\pi$ is a uniformizer for $v$, then $\pi^{-N}\phi\pi^N$ is an integral model for $N$ sufficiently large, and so the set of which we are taking the minimum is not vacuous.  On the other hand, $v$ is discrete and $c(\psi)\geq 0$ for every integral model of $\phi$, so the minimum in the definition exists.
\begin{proposition}\label{prop:disclocal}
If $v$ is a finite place, then
\[j_{\phi, v}\leq \Disc_{\phi, v}< j_{\phi, v}+\deg(v).\]
In particular, if $j_{\phi, v}>0$ then \[j_{\phi, v}>\frac{1}{d+1}\max\{j_{\phi, v}+\Disc_{\phi, v}\}\] for some $d\geq 1$ depending only on $A$ and the rank of $\phi$.
\end{proposition}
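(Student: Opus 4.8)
The plan is to derive both bounds from an explicit description of $\Disc_{\phi,v}$ in terms of the coefficients of $\phi$. Write $\phi_{T_i}(x)=T_ix+\sum_{j\ge 1}a_{i,j}x^{q^j}$ for each generator $T_i$ of $A$. Since $v$ is a finite place, every $T_i$ is a $v$-integer, and a twisted polynomial with coefficients in $\Ocal$ remains so under composition and addition; hence $\psi/L$ is an integral model of $\phi$ exactly when $\psi_{T_i}(x)\in\Ocal[x]$ for all $i$. Any such $\psi$ has the form $\psi_a(x)=\alpha^{-1}\phi_a(\alpha x)$ for some $\alpha\in L^*$, its coefficients are $b_{i,j}=\alpha^{q^j-1}a_{i,j}$, and a short computation with the leading coefficient gives $c(\psi)=c(\phi)-\log|\alpha|$. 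Integrality of $\psi$ is therefore equivalent to $\log|\alpha|\le M$, where
\[M=\min_{\substack{1\le i\le m,\ 1\le j\le r\deg(T_i)\\ a_{i,j}\ne 0}}\frac{-\log|a_{i,j}|}{q^j-1}.\]

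With this in hand I would write
\[\Disc_{\phi,v}=\min_{\psi}c(\psi)=c(\phi)-\max\{\log|\alpha|:\alpha\in L^*,\ \log|\alpha|\le M\}.\]
The set $\log|L^*|$ is the discrete subgroup $\deg(v)\,\ZZ$ of $\RR$, so the maximum equals $\deg(v)\lfloor M/\deg(v)\rfloor$, a quantity lying in the interval $(M-\deg(v),\,M]$. Hence $c(\phi)-M\le\Disc_{\phi,v}<c(\phi)-M+\deg(v)$. Finally $-M=\max_{i,j}\tfrac{1}{q^j-1}\log|a_{i,j}|$, so $c(\phi)-M=j_{\phi,v}$ by the definition of $j_{\phi,v}$, which yields $j_{\phi,v}\le\Disc_{\phi,v}<j_{\phi,v}+\deg(v)$.

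For the "in particular", the inequality just proved shows $\max\{j_{\phi,v},\Disc_{\phi,v}\}=\Disc_{\phi,v}$, so it suffices to produce a $d$ with $\Disc_{\phi,v}<(d+1)\,j_{\phi,v}$ whenever $j_{\phi,v}>0$, and by the first part this follows once $\deg(v)\le d\,j_{\phi,v}$. Fix a non-constant $T\in A$ once and for all; by the Gauss-type lemma at finite places, $j_{\phi,v}=j_{\phi_T,v}$, and with $\phi_T(x)=Tx+a_1x^q+\cdots+a_nx^{q^n}$, $n=r\deg(T)$, $a_n\ne 0$, and $c(\phi)=-\tfrac{1}{q^n-1}\log|a_n|$, a maximizing index $j_0$ for $j_{\phi_T,v}$ satisfies $j_0<n$ and $a_{j_0}\ne 0$ as soon as $j_{\phi_T,v}>0$, whence
\[j_{\phi,v}=\frac{\log|a_{j_0}|}{q^{j_0}-1}-\frac{\log|a_n|}{q^n-1}=\frac{(q^n-1)\log|a_{j_0}|-(q^{j_0}-1)\log|a_n|}{(q^{j_0}-1)(q^n-1)}.\]
The numerator is a strictly positive element of $\deg(v)\,\ZZ$, hence is $\ge\deg(v)$, and the denominator is at most $(q^n-1)^2$; so $j_{\phi,v}\ge\deg(v)/(q^{r\deg T}-1)^2$, and we may take $d=(q^{r\deg T}-1)^2$, which depends only on $A$ and $r$.

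I expect the substance of the argument to be this routine; the places that need attention are the bookkeeping — reducing integrality of a model to integrality on the generators $T_i$, keeping the sign in $c(\psi)=c(\phi)-\log|\alpha|$ correct, and using that $\log|L^*|=\deg(v)\,\ZZ$ so that the real-valued optimum $M$ can be matched by an admissible $\alpha$ to within $\deg(v)$. The only genuinely arithmetic input, needed for the second assertion, is the observation that a nonzero value of $j_{\phi,v}$ is a rational number whose numerator lies in $\deg(v)\,\ZZ$ and whose denominator is bounded in terms of $A$ and $r$, which forces it to be bounded below by $\deg(v)$ divided by a constant of the same kind.
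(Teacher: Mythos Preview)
Your argument is correct and follows the same strategy as the paper: the first inequality comes from the fact that integrality forces each $\frac{1}{q^j-1}\log|a_{i,j}|\le 0$, while the strict upper bound comes from the discreteness of $\log|L^*|=\deg(v)\ZZ$ (the paper phrases the latter as a contradiction via twisting by a uniformizer, whereas you compute $\Disc_{\phi,v}=c(\phi)-\deg(v)\lfloor M/\deg(v)\rfloor$ directly, but these are the same observation). For the second claim you obtain $d=(q^{r\deg T}-1)^2$ via a single $T$ and the Gauss-type identity $j_{\phi,v}=j_{\phi_T,v}$, while the paper takes $d=\lcm\{q^j-1:1\le j\le r\max_i\deg(T_i)\}$ from the full generating set; both choices depend only on $A$ and $r$, so either suffices.
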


\begin{proof}
Note that $j_{\phi, v}$ is a $\CC_v$-isomorphism invariant, and so if suffices to show that 
\[j_{\psi, v}\leq c(\psi)< j_{\psi, v}+\deg(v),\]
where $\psi$ is an integral model for $\phi/L$.  The first inequality is trivial from the definition of $j_{\psi, v}$, and the fact that the coefficients of $\psi_a(x)$ are integral for every $a\in A$.  Now suppose, toward a contradiction, that $c(\psi)\geq  j_{\psi, v}+\deg(v)$ and that $\pi$ is a uniformizer for $v$.  Then, if $T_1, ..., T_m$ is the generated set for $A$ used in the definition of $j_{\psi, v}$, we have $\frac{1}{q^j-1}\log|a_{i, j}|\leq -\deg(v)$,  for all $i$ and $j$, where
\[\psi_{T_i}(x)=T_ix+a_{i, 1}x^q+\cdots +a_{i, \deg(T_i)r}x^{q^{\deg(T_i)r}}.\]
In particular, if
\[\pi\psi\pi^{-1}_{T_i}(x)=T_ix+b_{i, 1}x^q+\cdots +b_{i, \deg(T_i)r}x^{q^{\deg(T_i)r}}\]
for all $i$, then $\frac{1}{q^j-1}\log|b_{i, j}|\leq 0$.  In other words, $\pi\psi\pi^{-1}$ is an integral model of $\phi$.  But $c(\pi\psi\pi^{-1})<c(\psi)$, contradicting the minimality of $\psi$.  It must be the case, then, that $\Disc_{\phi, v}< j_{\phi, v}+\deg(v)$.

Now, suppose that $j_{\phi, v}>0$.  Note that if $M=\max\{\deg(T_1), ...., \deg(T_m)\}$, where the $T_i$ are our fixed generators of $A$ as an $\FF_q$-algebra, and
\[d=d(A, r)=\lcm\{q^j-1:1\leq j\leq rM\},\]
then $j_{\phi, v}\in \deg(v)\ZZ[1/d]$.  In particular, $j_{\phi, v}=0$ or $j_{\phi, v}\geq \deg(v)/d$.   It follows, if $j_{\phi, v}\neq 0$, that
\[\Disc_{\phi, v}< j_{\phi, v}+\deg(v)\leq (1+d)j_{\phi, v},\]
whence the bound.
\end{proof}

%%%%%%%%%%%%%%%%%%%%%%%%%%%%%%%%%%%
%%%%
%%%%%%%%%%%%%%%%%%%%%%%%%%%%%%%%%%%
%%%%
%%%%%%%%%%%%%%%%%%%%%%%%%%%%%%%%%%%
%%%%
%%%%%%%%%%%%%%%%%%%%%%%%%%%%%%%%%%%
%%%%
%%%%%%%%%%%%%%%%%%%%%%%%%%%%%%%%%%%
%%%%
%%%%%%%%%%%%%%%%%%%%%%%%%%%%%%%%%%%
%%%%
%%%%%%%%%%%%%%%%%%%%%%%%%%%%%%%%%%%
%%%%
%%%%%%%%%%%%%%%%%%%%%%%%%%%%%%%%%%%

\section{Local heights for Drinfeld modules}
\label{sec:localheights}

In this section we lay out a theory of local heights for Drinfeld modules.  Local heights for Drinfeld modules have previously been considered by Poonen \cite{poonen} and Ghioca \cite{ghioca1, ghioca2}, but we define them slightly differently here.  Throughout this section, $\CC_v$ will be a complete, algebraically closed $A$-field with valuation $v$.

\begin{definition}
Let  $\phi/\CC_v$ be a Drinfeld module.  Define the \emph{local height} associated to $\phi$ by
\[\lambda_{\phi}(x)=\log|x^{-1}|+G_{\phi}(x)+c(\phi).\]
\end{definition}

\begin{remark}
We note that $\lambda$ admits a natural extension to the Berkovich analytic space $\Aberk{\CC_v}$  associated to $\CC_v$ (see~\cite{baker-rumely}), namely
\[\lambda_{\phi}(x)=-\log\delta(x, 0)_\infty+G_{\phi}(x)+c(\phi),\]
where $\delta(x, y)_\infty$ is the Hsia kernel \cite[p.~73]{baker-rumely} and where $G_{\phi}$ is extended in the natural way.  In particular, one can check that this extension of $\lambda_\phi(x)$ agrees with $g_{\mu_\phi}(x, 0)$, where $g_{\mu_\phi}(x, y)$ is the Arakelov-Green's function on $\Pberk{\CC_v}\times\Pberk{\CC_v}$ for the invariant measure associated to $\phi_{T}(x)$, for any non-constant $T\in A$ \cite[p.~300]{baker-rumely}.  In some sense, then, $\lambda_\phi(x)$ is the correct local height from the capacity-theoretic viewpoint, relative to the identity element of the module.  Some of the properties of $\lambda_{\phi}$ outlined in Lemma~\ref{lem:lambdaprops} below may be deduced from the properties of $g_{\mu_\phi}$ derived in \cite{baker-rumely}, but we give direct and elementary proofs for completeness and simplicity.
\end{remark}

\begin{lemma}\label{lem:lambdaprops}
Let $\lambda_\phi$ be the local height defined above, for the Drinfeld module $\phi/\CC_v$.
\begin{enumerate}
\item The map $\lambda_\phi:\phi(\CC_v)\to \RR$ is continuous, except for a simple pole at the origin.
\item If $\psi\alpha=\alpha\phi$, for some $\alpha\in\CC_v^*$, then \[\lambda_\phi(x)=\lambda_\psi(\alpha x).\]
\item If $\phi$ has good reduction, then
\[\lambda_\phi(x)=\log^{+}|x^{-1}|.\]
\item If $\phi$ has potentially good reduction, then $\lambda_\phi$ is non-negative.
\item If $|x|>B_T$, for any $T\in A\setminus \FF_q$ then
\[\lambda_\phi(x)=0.\]
\item For any $a\in A$, we have
\[\lambda_\phi(\phi_a(x))=|a|_\infty^r\lambda_\phi(x)-\log\left|\frac{\phi_a(x)}{\Delta x^{|a|_\infty^r}}\right|,\]
where $\Delta$ is the leading coefficient of $\phi_a(x)$.
\end{enumerate}
\end{lemma}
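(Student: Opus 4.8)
The plan is to verify the six assertions in sequence, drawing throughout on the properties of the Green's function $G_\phi$ already established in Lemma~\ref{lem:greensprops} and Lemma~\ref{lem:greenspropertiesII}, together with the definition $\lambda_\phi(x)=\log|x^{-1}|+G_\phi(x)+c(\phi)$. For (i), continuity away from $0$ follows since $G_\phi$ is a locally uniform limit of the continuous functions $x\mapsto \deg(\phi_T)^{-N}\log^+|\phi_T^N(x)|$ (indeed, on any bounded region the value stabilizes after finitely many iterates, by the proof of Lemma~\ref{lem:greensprops}(iii)), while $\log|x^{-1}|=-\log|x|$ contributes exactly a simple pole at the origin; near $0$ we have $G_\phi(x)=0$ once $|x|$ is small, so $\lambda_\phi(x)=-\log|x|+c(\phi)$ there. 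For (ii), if $\psi\alpha=\alpha\phi$ then $\psi_T^N(\alpha x)=\alpha\phi_T^N(x)$, so $G_\psi(\alpha x)=G_\phi(x)$; meanwhile $c(\psi)$ and $c(\phi)$ differ by exactly $\log|\alpha|$ (from the change in leading coefficient, $b_{r\deg T}=\alpha^{q^{r\deg T}-1}a_{r\deg T}$, one computes $c(\psi)=c(\phi)-\log|\alpha|$ after normalizing signs), and $\log|(\alpha x)^{-1}|=\log|x^{-1}|-\log|\alpha|$; the two discrepancies cancel, giving $\lambda_\psi(\alpha x)=\lambda_\phi(x)$. The sign bookkeeping here is the one place I expect to have to be careful.

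For (iii), good reduction means we may take a model with $\phi_T(x)\in\Ocal[x]$ and leading coefficient a unit, whence $c(\phi)=0$ and $|\phi_T(x)|\le\max\{|x|,1\}=\max\{|x|,1\}^{?}$; more precisely $|\phi_T^N(x)|$ stays $\le 1$ when $|x|\le 1$ and equals $|x|^{\deg\phi_T^N}$ when $|x|>1$, so $G_\phi(x)=\log^+|x|$. Then $\lambda_\phi(x)=\log|x^{-1}|+\log^+|x|=\log^+|x^{-1}|$, using that $-\log|x|+\max\{\log|x|,0\}=\max\{-\log|x|,0\}$. Statement (iv) follows from (iii) and (ii): potentially good reduction means some $\psi=\alpha^{-1}\phi\alpha$ has good reduction over $\CC_v$, so $\lambda_\phi(x)=\lambda_\psi(\alpha^{-1}x)=\log^+|\alpha x^{-1}|\ge 0$. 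For (v), the remark following Lemma~\ref{lem:greensprops} (the definition of $B_T$) gives $G_\phi(x)=\log|x|-c(\phi)$ when $|x|>B_T$, so $\lambda_\phi(x)=\log|x^{-1}|+\log|x|-c(\phi)+c(\phi)=0$.

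Finally, (vi) is a direct computation: writing $\Delta$ for the leading coefficient of $\phi_a(x)$ (of degree $|a|_\infty^r$), apply the functional equation $G_\phi(\phi_a(x))=|a|_\infty^r G_\phi(x)$ from Lemma~\ref{lem:greenspropertiesII}, so that
\[
\lambda_\phi(\phi_a(x))=\log|\phi_a(x)^{-1}|+|a|_\infty^r G_\phi(x)+c(\phi).
\]
Now substitute $G_\phi(x)=\lambda_\phi(x)-\log|x^{-1}|-c(\phi)$ to get
\[
\lambda_\phi(\phi_a(x))=|a|_\infty^r\lambda_\phi(x)+\log|\phi_a(x)^{-1}|+|a|_\infty^r\log|x|+(1-|a|_\infty^r)c(\phi),
\]
and it remains to identify $\log|\phi_a(x)^{-1}|+|a|_\infty^r\log|x|+(1-|a|_\infty^r)c(\phi)$ with $-\log\bigl|\phi_a(x)/(\Delta x^{|a|_\infty^r})\bigr|$; since $-\log|\phi_a(x)/(\Delta x^{|a|_\infty^r})|=-\log|\phi_a(x)|+\log|\Delta|+|a|_\infty^r\log|x|$, this amounts to the identity $\log|\Delta|=(1-|a|_\infty^r)c(\phi)$, i.e. $c(\phi)=\tfrac{1}{|a|_\infty^r-1}\log|\Delta^{-1}|$, which is precisely the formula for $c(\phi)$ recorded after the proof that $G_{\phi_a}$ is independent of $a$. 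The main obstacle is nothing deep — it is keeping the signs and the various normalizations of $c(\phi)$ consistent across parts (ii), (iii) and (vi); once the conventions are pinned down, each verification is a short computation.
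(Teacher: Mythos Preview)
Your approach is essentially identical to the paper's: each item is dispatched by the same short computation from the definition $\lambda_\phi(x)=\log|x^{-1}|+G_\phi(x)+c(\phi)$, using Lemma~\ref{lem:greensprops} for (iii) and (v), the coordinate change for (ii) and (iv), and the functional equation $G_\phi\circ\phi_a=|a|_\infty^rG_\phi$ together with $c(\phi)=\frac{1}{|a|_\infty^r-1}\log|\Delta^{-1}|$ for (vi). One small correction to watch in (ii): from $\psi\alpha=\alpha\phi$ one gets $b_{r\deg T}=\alpha^{1-q^{r\deg T}}a_{r\deg T}$ (not $\alpha^{q^{r\deg T}-1}$), hence $c(\psi)=c(\phi)+\log|\alpha|$, which is exactly what makes the cancellation with $\log|(\alpha x)^{-1}|=\log|x^{-1}|-\log|\alpha|$ go through --- you flagged this as the place to be careful, and indeed it is.
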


\begin{remark}
Compare with Theorem~4.2 of \cite[p.~473]{ataec} for local heights relative to elliptic curves over $p$-adic fields.
\end{remark}

\begin{proof}[Proof of Lemma~\ref{lem:lambdaprops}]
\begin{enumerate}
\item This result is clear enough, but also follows from \cite[Proposition~8.66 p.~242]{baker-rumely}.
\item If $T\in A$ is non-constant, and \[\psi_T(x)=\alpha\phi_T(\alpha^{-1}z),\] we have
\[G_{\phi_T}(z)=G_{\psi_T}(\alpha z)\]
and $c(\psi)=c(\phi)-\log|\alpha|_v$, so
\begin{eqnarray*}
\lambda_{\phi}(x)&=&\log|x^{-1}|+G_{\phi_T}(x)+c(\phi)\\
&=&\log|x^{-1}|+G_{\psi_T}(\alpha x)+c(\psi)-\log|\alpha|\\
&=&\lambda_{\psi}(\alpha z).
\end{eqnarray*}
So the local height is coordinate invariant.

\item Note that if $\phi/\CC_v$ has good reduction, then the coefficients of $\phi_T$ are integral, and the leading coefficient is a unit.  It follows that $c(\phi)=0$, and from Lemma~\ref{lem:greensprops} that $G_{\phi_T}(x)=\log^+|x|$.
So \[\lambda_\phi(x)=\log|x^{-1}|+\log^+|x|=\log^+|x^{-1}|.\]

\item If $\phi$ has potentially good reduction then there is a $\psi/\CC_v$ with good reduction, and an $\alpha\in\CC_v^*$ such that $\psi\alpha=\alpha\phi$. By two of the claims above, we have \[\lambda_{\phi}(x)=\log^+|(\alpha x)^{-1}|\geq 0\] for all $x$.

\item 
This follows directly from Lemma~\ref{lem:greensprops}.

\item
This is proven by noting that
\begin{eqnarray*}
\lambda_\phi(\phi_a(x))&=&\log|\phi_a(x)^{-1}|+G_{\phi_a}(\phi_a(x))-\frac{1}{|a|_\infty^r-1}\log|\Delta|\\
&=&|a|_\infty^r\log|x^{-1}|+|a|_\infty^rG_{\phi_a}(x)-\frac{|a|_\infty^r}{|a|_\infty^r-1}\log|\Delta|-\log|\phi_a(x)|\\&&+|a|_\infty^r\log|x|+\frac{|a|_\infty^r-1}{|a|_\infty^r-1}\log|\Delta|\\
&=&|a|_\infty^r\lambda(x)-\log\left|\frac{\phi_a(x)}{\Delta x^{|a|_\infty^r}}\right|.
\end{eqnarray*}
\end{enumerate}
\end{proof}

We now proceed with some definitions and lemmas aimed at showing that one cannot have too large a set of points in $\phi(L)$ at which both $\lambda_\phi$ and $G_\phi$ take small values.
The following definition and lemmas are motivated by the work of Ghioca~\cite{ghioca1, ghioca2}.
\begin{definition}
If $\phi/L$ is a Drinfeld module, and $T\in A$ is non-constant, then we say that $x\in L$ is \emph{$T$-generic} if
\[|\phi_T(x)|=\max_{0\leq i\leq r\deg(T)}|a_ix^{q^i}|,\]
where $a_0=T$ as usual.  
\end{definition}

\begin{lemma}\label{lem:genericbound}
If $x\in \phi(\CC_v)$ is $T$-generic and
\[\log|\phi_T(x)|\leq \log B_T\]
then
\[\log|x|\leq c(\phi)+\frac{1}{(q^{r\deg(T)}-1)^2}\log^+|T^{-1}|.\]
\end{lemma}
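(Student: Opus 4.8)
The plan is to use the $T$-genericity hypothesis to convert the single inequality $|\phi_T(x)|\le B_T$ into a whole family of coefficient-wise inequalities, and to control the quantity $\max\{|\xi|:\xi\in\phi[T]\}$ hidden inside the definition of $B_T$ by a crude Newton-polygon estimate on the size of $T$-torsion points. Write $d=r\deg(T)$ and $n=q^d$, so $\phi_T(x)=a_0x+a_1x^q+\cdots+a_dx^{q^d}$ with $a_0=T$ and $a_d\ne 0$, and recall $c(\phi)=\tfrac{1}{n-1}\log|a_d^{-1}|$. It is convenient to put $D=\log|a_d|=-(n-1)c(\phi)$ and $P=\tfrac{1}{n-1}\log^+|T^{-1}|$; then $\log B_T=\ell+P$ with $\ell=\log\max\{|\xi|:\xi\in\phi[T]\}$, and the asserted bound is exactly $\log|x|\le\tfrac{P-D}{n-1}$.

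First I would record what $T$-genericity buys: from $|\phi_T(x)|=\max_{0\le i\le d}|a_i|\,|x|^{q^i}$ and $|\phi_T(x)|\le B_T$ one gets $\log|x|\le\tfrac{\log B_T-\log|a_i|}{q^i}$ for every $i$, and in particular
\[\log|x|\le\frac{\ell+P-D}{n}.\]
Next I would bound $\ell$: for a nonzero $\xi\in\phi[T]$ the relation $a_d\xi^{q^d}=-(a_0\xi+a_1\xi^q+\cdots+a_{d-1}\xi^{q^{d-1}})$ and the ultrametric inequality yield an index $i_0\le d-1$ with $|a_d|\,|\xi|^{q^d}\le|a_{i_0}|\,|\xi|^{q^{i_0}}$, hence $\log|\xi|\le\tfrac{\log|a_{i_0}|-D}{n-q^{i_0}}$; applying this to a $\xi$ achieving the maximum produces an $i_0\le d-1$ with
\[\ell\le\frac{\log|a_{i_0}|-D}{n-q^{i_0}}.\]

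The argument then splits into two cases. If $\ell\le\tfrac{P-D}{n-1}$, substituting this into the first display and simplifying gives $\log|x|\le\tfrac1n\bigl(\tfrac{P-D}{n-1}+(P-D)\bigr)=\tfrac{P-D}{n-1}$, as wanted. If instead $\ell>\tfrac{P-D}{n-1}$, I would apply $T$-genericity at the index $i_0$ of the second display, $\log|x|\le\tfrac{\log B_T-\log|a_{i_0}|}{q^{i_0}}=\tfrac{\ell+P-\log|a_{i_0}|}{q^{i_0}}$, and feed in the lower bound $\log|a_{i_0}|\ge(n-q^{i_0})\ell+D$ obtained by rearranging the second display; this gives $\log|x|\le\tfrac{-(n-q^{i_0}-1)\ell+P-D}{q^{i_0}}$, and since $q^{i_0}\le q^{d-1}\le n-1$ one has $n-q^{i_0}-1\ge 0$, so a short cross-multiplication using the case hypothesis $(n-1)\ell>P-D$ again yields $\log|x|\le\tfrac{P-D}{n-1}$. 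Unwinding $D$ and $P$ then rewrites this as $\log|x|\le c(\phi)+\tfrac{1}{(q^{r\deg(T)}-1)^2}\log^+|T^{-1}|$.

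Both inputs are individually standard --- the Newton-polygon estimate is just the usual bound on the largest root of a polynomial over a non-archimedean field, and the genericity hypothesis is precisely what makes the coefficient-wise inequalities available --- so I expect the only delicate point to be the bookkeeping in the two cases of the last step, in particular the degenerate subcases $i_0=0$ (where $a_0=T$ may have absolute value above or below $1$, so one must track $\log^+$ carefully) and $q^{i_0}=n-1$, which forces $q=2$ and $d=1$ and makes the final inequality an equality.
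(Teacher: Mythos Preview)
Your proof is correct, and uses the same two inputs as the paper---$T$-genericity to bound each $|a_ix^{q^i}|$ by $B_T$, and a Newton-polygon estimate for the largest $T$-torsion point---but the organization differs. The paper does not split into cases: it fixes the maximizing index $i$ (so $|\phi_T(x)|=|a_ix^{q^i}|$) and the Newton-polygon index $j$ for which $\log B_T=\frac{1}{q^R-q^j}\log|a_j/\Delta|+\frac{1}{q^R-1}\log^+|T^{-1}|$ holds with \emph{equality}, and then feeds the three inequalities $|a_ix^{q^i}|\le B_T$, $|a_ix^{q^i}|\ge|a_jx^{q^j}|$, and $|a_ix^{q^i}|\ge|\Delta x^{q^R}|$ into one algebraic chain that collapses directly to $(q^R-1)(q^R-q^j)\log|x|\le(q^R-q^j)\log|\Delta^{-1}|+\frac{q^R-q^j}{q^R-1}\log^+|T^{-1}|$. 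Your approach trades the exact Newton-polygon equality for the weaker upper bound $\ell\le\frac{\log|a_{i_0}|-D}{n-q^{i_0}}$ and compensates with the case split on $\ell\lessgtr\frac{P-D}{n-1}$; this avoids introducing the maximizing index $i$ altogether, since you apply genericity directly at $d$ or at $i_0$ as the case demands. Both arguments are equally elementary; yours is slightly more modular, while the paper's single computation makes the factor $(q^R-q^j)$ visibly cancel. Your worry about the $i_0=0$ subcase is unfounded: in your notation $\log|a_0|=\log|T|$ enters only through the inequality $\log|a_{i_0}|\ge(n-q^{i_0})\ell+D$, which is used in the same way regardless of the sign of $\log|T|$, so no separate tracking of $\log^+$ is needed.
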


\begin{remark}
Note that $c(\phi)$ is the average (logarithmic) size of the non-zero elements of $\phi[T]$, and so the lemma says that $T$-generic points whose image under $\phi_T$ is no larger than the largest elements of $\phi[T]$ are themselves no larger than the average elements of $\phi[T]$.
\end{remark}

\begin{proof}[Proof of Lemma~\ref{lem:genericbound}]
To begin,
write
\[\phi_T(x)=Tx+a_1x^q+\cdots+\Delta x^{q^{r\deg(T)}},\]
where we take $a_0=T$ and $a_{r\deg{T}}=\Delta$.  We will, for convenience, write $R=r\deg(T)$.
Using the hypothesis that $x$ is $T$-generic, we choose $i$ so that 
\[|\phi_T(x)|=|a_ix^{q^i}|=\max_{0\leq k\leq R}\left\{|a_kx^{q^k}|\right\}.\]
From the theory of Newton polygons, we see that we may also choose $0\leq j<R$ such that \[\frac{1}{q^R-q^j}\log|a_j/\Delta|+\frac{1}{q^R-1}\log^+|T^{-1}|=\log B_T.\]

We proceed much as in \cite{ghioca1}.  By hypothesis
\[q^i\log|x|+\log|a_i|\leq \log B_T=\frac{1}{q^R-q^j}\log|a_j/\Delta|+\frac{1}{q^R-1}\log^+|T^{-1}|.\]
Re-arranging this, we have
\[(q^{R+i}-q^{j+i})\log|x|+(q^R-q^j)\log |a_i|\leq \log |a_j|-\log|\Delta|+\frac{q^R-q^j}{q^R-1}\log^+|T^{-1}|.\]
Also, by hypothesis, we have
\[|a_ix^{q^i}|\geq |a_jx^{q^j}|,\]
and so we may conclude that
\[\log |a_j|\leq \log|a_i|+(q^i-q^j)\log|x|.\]
Combining these gives
\begin{multline*}
(q^{R+i}-q^{j+i})\log|x|+(q^R-q^j)\log |a_i|\\\leq  \log|a_i|+(q^i-q^j)\log|x|-\log|\Delta|+\frac{q^R-q^j}{q^R-1}\log^+|T^{-1}|
\end{multline*}
or
\begin{equation}\label{eq:ai}\left(q^{R+i}-q^{j+i}-q^i+q^j\right)\log|x|\leq (1-q^R+q^j)\log|a_i|-\log|\Delta|+\frac{q^R-q^j}{q^R-1}\log^+|T^{-1}|.\end{equation}
At the same time, our hypotheses also ensure that
\[|a_ix^{q^i}|\geq |\Delta x^{q^R}|,\]
and so
\[\log|a_i|\geq (q^R-q^i)\log|x|+\log|\Delta|.\]
But $q\geq 2$, and hence $q^j+1\leq q^{j+1}\leq q^R$, so we have $1-q^R+q^j\leq 0$. This gives 
\[(1-q^R+q^j)\log|a_i|\leq (1-q^R+q^j)\left((q^R-q^i)\log|x|+\log|\Delta|\right).\]
Combining this with \eqref{eq:ai} gives
\begin{multline*}
\left(q^{R+i}-q^{j+i}-q^i+q^j\right)\log|x|\\\leq (1-q^R+q^j)\left((q^R-q^i)\log |x|+\log|\Delta|\right)-\log|\Delta|+\frac{q^R-q^j}{q^R-1}\log^+|T^{-1}|,
\end{multline*}
whereupon
\[(q^R-1)(q^R-q^j)\log|x|\leq (q^R-q^j)\log|\Delta^{-1}|+\frac{q^R-q^j}{q^R-1}\log^+|T^{-1}|.\]
Recalling that $c(\phi)=\frac{1}{q^R-1}\log|\Delta^{-1}|$, 
this ensures that
\[\log|x|\leq c(\phi)+\frac{1}{(q^R-1)^2}\log^+|T^{-1}|.\]
\end{proof}

\begin{lemma}\label{lem:phiTsmall}
If $x\in\phi(\CC_v)$ is $T$-generic and \[\log|\phi_T(x)|\leq c(\phi)+\frac{1}{(q^{r\deg(T)}-1)^2}\log^+|T^{-1}|,\] then
\[-\log|x|_v+c(\phi)\geq (1-q^{-1})j_{\phi_T, v}-\frac{1}{q(q^{r\deg(T)}-1)^2}\log^+|T^{-1}|.\]
\end{lemma}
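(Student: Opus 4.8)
The plan is to mimic, in a lighter form, the argument of Lemma~\ref{lem:genericbound}: use $T$-genericity to compare $|\phi_T(x)|$ with the single monomial $|a_k x^{q^k}|$, where $k$ is the index realizing the maximum in the definition of $j_{\phi_T,v}$. Throughout write $R=r\deg(T)$ and
\[\phi_T(x)=Tx+a_1x^q+\cdots+\Delta x^{q^R},\]
with $a_0=T$ and $a_R=\Delta\neq 0$. First I would observe that, since $a_R\neq 0$, the maximum defining $j_{\phi_T,v}-c(\phi)=\max_{1\leq j\leq R}\frac{1}{q^j-1}\log|a_j|$ is attained at some index $k$ with $1\leq k\leq R$ and $a_k\neq 0$; moreover evaluating the candidate $j=R$ and recalling $c(\phi)=\frac{1}{q^R-1}\log|\Delta^{-1}|$ shows that this maximum is $\geq 0$, so that $j_{\phi_T,v}\geq c(\phi)$ need not hold but $j_{\phi_T,v}\geq 0$ does. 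Fix such a $k$, so $\log|a_k|=(q^k-1)(j_{\phi_T,v}-c(\phi))$.

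The main step is then a short computation. By $T$-genericity, $|\phi_T(x)|=\max_{0\leq i\leq R}|a_ix^{q^i}|\geq |a_kx^{q^k}|$, hence
\[\log|\phi_T(x)|\geq \log|a_k|+q^k\log|x|.\]
Feeding in the hypothesis $\log|\phi_T(x)|\leq c(\phi)+\frac{1}{(q^R-1)^2}\log^+|T^{-1}|$ and the formula for $\log|a_k|$, I obtain
\[q^k\log|x|\leq q^kc(\phi)-(q^k-1)j_{\phi_T,v}+\frac{1}{(q^R-1)^2}\log^+|T^{-1}|,\]
and dividing by $q^k$ and rearranging gives
\[-\log|x|_v+c(\phi)\geq (1-q^{-k})j_{\phi_T,v}-\frac{1}{q^k(q^R-1)^2}\log^+|T^{-1}|.\]

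To finish I would use $k\geq 1$ together with the non-negativity of both $j_{\phi_T,v}$ and $\log^+|T^{-1}|$: these give $(1-q^{-k})j_{\phi_T,v}\geq (1-q^{-1})j_{\phi_T,v}$ and $\frac{1}{q^k(q^R-1)^2}\log^+|T^{-1}|\leq \frac{1}{q(q^R-1)^2}\log^+|T^{-1}|$, which is exactly the claimed bound (the degenerate case $x=0$ being trivial, as the left-hand side is then $+\infty$). I do not expect any real obstacle here; the only points that need care are verifying that the relevant maximum is attained at an index $k\geq 1$ with $a_k\neq 0$ so that the substitution for $\log|a_k|$ is legitimate, and tracking the signs in the last step so that the two coarsening estimates both go in the correct direction — which is precisely where the non-negativity of $j_{\phi_T,v}$ and of $\log^+|T^{-1}|$ enters.
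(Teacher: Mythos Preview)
Your argument is correct and follows the paper's proof essentially line for line: pick $k$ realizing the maximum in the definition of $j_{\phi_T,v}$, use $T$-genericity to get $\log|\phi_T(x)|\geq \log|a_k|+q^k\log|x|$, substitute $\log|a_k|=(q^k-1)(j_{\phi_T,v}-c(\phi))$, and then coarsen via $k\geq 1$ together with the non-negativity of $j_{\phi_T,v}$ and $\log^+|T^{-1}|$. One tiny expository slip: evaluating the candidate $j=R$ gives $\frac{1}{q^R-1}\log|\Delta|=-c(\phi)$, so the maximum $\max_j\frac{1}{q^j-1}\log|a_j|$ is $\geq -c(\phi)$ (not $\geq 0$), and it is this that yields $j_{\phi_T,v}\geq 0$; the substance of your argument is unaffected.
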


\begin{proof}
Suppose that $x$ is generic, and that $\log|\phi_T(x)|\leq c(\phi)$.  Then we have \[\log|\phi_T(x)|=\max_{0\leq i\leq r\deg(T)}\left\{ \log|a_ix^{q^i}|\right\}\geq \log|a_kx^{q^k}|,\] where $1\leq k\leq r\deg(T)$ is an index maximizing $\frac{1}{q^k-1}\log|a_k|$.  Note that, by definition, \[j_{\phi_T, v}=\frac{1}{q^k-1}\log|a_k|+c(\phi).\]  It follows that
\begin{eqnarray*}
c(\phi)+\frac{1}{(q^{r\deg(T)}-1)^2}\log^+|T^{-1}|&\geq&\log|\phi_T(x)|\\
&\geq & \log|a_k|+q^k\log|x|\\
&= & (q^k-1)(j_{\phi_T, v}-c(\phi))+q^k\log|x|,
\end{eqnarray*}
and hence that
\begin{eqnarray*}
-\log|x|_v+c(\phi)&\geq & -q^{-k}\left(c(\phi)+\frac{1}{(q^{r\deg(T)}-1)^2}\log^+|T^{-1}|-(q^k-1)(j_{\phi_T, v}-c(\phi))\right)+c(\phi)\\
&=&(1-q^{-k})j_{\phi_T, v}-q^{-k}\frac{1}{(q^{r\deg(T)}-1)^2}\log^+|T^{-1}|\\
&\geq&(1-q^{-1})j_{\phi_T, v}-\frac{1}{q(q^{r\deg(T)}-1)^2}\log^+|T^{-1}|,
\end{eqnarray*}
since $j_{\phi, v}$ and $\log^+|T^{-1}|$ are non-negative.
\end{proof}

The final lemma of this section shows, essentially, that we cannot have too large a collection of values $a\in A$ such that both $G_\phi(\phi_a(x))$ and $\lambda_\phi(\phi_a(x))$ are small.  This will help us establish our lower bounds in Section~\ref{sec:global}, since the sum of $G_\phi(x)$ over all places agrees with the same sum for $\lambda_\phi(x)$, as long as $x$ is non-zero.

\begin{lemma}\label{lem:localchoose}
Suppose that $X\subseteq A$ is an additive subgroup, and that for all $a\in X$ we have
\[\log|\phi_{T^2a}(x)|\leq \log B_T.\]
Then there is an additive subgroup $Y\subseteq X$ with $\#Y\geq q^{-4r^2\deg(T)^2} \# X$ such that for all $a\in Y$, we have either $\phi_a(x)=0$, or
\[-\log|\phi_a(x)|+c(\phi)\geq  (1-q^{-1})j_{\phi_T, v}-\frac{1}{q(q^{r\deg(T)}-1)^2}\log^+|T^{-1}|.\]
\end{lemma}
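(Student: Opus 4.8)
The plan is to reduce the statement to the two previous lemmas (Lemma~\ref{lem:genericbound} and Lemma~\ref{lem:phiTsmall}) by finding a large additive subgroup on which the relevant points are $T$-generic, and then apply those lemmas to $\phi_T(\phi_a(x))=\phi_{Ta}(x)$. First I would observe that for a fixed $y\in\CC_v$, being $T$-generic is not an additive condition on $y$, so we cannot work with $X$ directly; instead we pass to a sublattice. The standard device (as in Ghioca~\cite{ghioca1}) is the following: the leading-term comparison that defines $T$-genericity for $\phi_T(y)$ fails only when there is cancellation among the monomials $a_iy^{q^i}$, $0\le i\le r\deg(T)$; one shows that the set of $y$ in a given additive group for which $\phi_{T}(y)$ is \emph{not} $T$-generic is contained in a union of cosets of a subgroup of bounded index, the index bounded by a power of $q$ governed by $r\deg(T)$. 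Carrying this out twice — once to make $\phi_{Ta}(x)$ generic and once more to control $\phi_{T^2a}(x)$ relative to $\phi_{Ta}(x)$ — will cost a factor of at most $q^{2r\deg(T)}$ each time, and being slightly generous gives the stated bound $\#Y\ge q^{-4r^2\deg(T)^2}\#X$.

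Concretely, here are the steps in order. (1) Set $z=\phi_a(x)$, so the hypothesis reads $\log|\phi_{T^2}(z)|\le \log B_T$, i.e. $\log|\phi_T(\phi_T(z))|\le\log B_T$. (2) Restrict to a subgroup $X_1\subseteq X$ of index $\le q^{2r\deg(T)}$ on which $w:=\phi_T(z)=\phi_{Ta}(x)$ is $T$-generic; this uses the Newton-polygon/cancellation argument sketched above, applied to the additive parametrization $a\mapsto \phi_{Ta}(x)$, which is $\FF_q$-linear in $a$. (3) For $a\in X_1$, apply Lemma~\ref{lem:genericbound} to $w$ (valid since $w$ is $T$-generic and $\log|\phi_T(w)|\le\log B_T$) to conclude $\log|w|=\log|\phi_{Ta}(x)|\le c(\phi)+\frac{1}{(q^{r\deg(T)}-1)^2}\log^+|T^{-1}|$. (4) Restrict further to $Y\subseteq X_1$ of index $\le q^{2r\deg(T)}$ on which $\phi_a(x)=\phi_T^{-1}$-preimage business is handled: more precisely, on which $\phi_T(\phi_a(x))=\phi_{Ta}(x)$ has $\phi_a(x)$ itself $T$-generic. (5) For $a\in Y$, the conclusion $\log|\phi_{Ta}(x)|\le c(\phi)+\frac{1}{(q^{r\deg(T)}-1)^2}\log^+|T^{-1}|$ from step~(3) together with $T$-genericity of $\phi_a(x)$ lets us invoke Lemma~\ref{lem:phiTsmall} with $x$ replaced by $\phi_a(x)$, yielding exactly
\[
-\log|\phi_a(x)|_v+c(\phi)\ge (1-q^{-1})j_{\phi_T,v}-\frac{1}{q(q^{r\deg(T)}-1)^2}\log^+|T^{-1}|,
\]
unless $\phi_a(x)=0$, which is the excluded case. (6) Finally, $[X:Y]\le q^{4r\deg(T)}\le q^{4r^2\deg(T)^2}$ (crudely, since $r\deg(T)\ge 1$), giving the stated lower bound on $\#Y$.

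The main obstacle I expect is step~(2)/(4): making precise the claim that the non-$T$-generic locus, inside an additive group, is covered by few cosets of a finite-index subgroup, with an explicit index bound in terms of $q$ and $r\deg(T)$. The point is that $\phi_{Ta}(x)$, as a function of $a\in A$, is an additive (i.e. $\FF_q$-linear) polynomial, so each monomial $a_i\bigl(\phi_{Ta}(x)\bigr)^{q^i}$ is itself $\FF_q$-linear in $a$ after an $\FF_q$-linear change, and the condition that two of these have equal valuation — the only way genericity can fail — cuts out the zero set of a nonzero additive polynomial of bounded $q$-degree, hence a subgroup (or coset) of bounded index. One must be careful that the relevant polynomials are not identically zero; this is where the hypothesis that $x$ is non-torsion is implicitly used upstream, but at the level of this lemma it suffices that if \emph{all} $\phi_a(x)=0$ for $a\in X$ the conclusion holds vacuously, so we may assume some $\phi_a(x)\ne 0$ and run the argument on the nonzero locus. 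The bookkeeping of indices is routine once this structural point is granted, and the factor $q^{4r^2\deg(T)^2}$ is deliberately wasteful to absorb any slack.
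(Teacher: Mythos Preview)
Your overall plan---reduce to Lemmas~\ref{lem:genericbound} and~\ref{lem:phiTsmall} by arranging that both $\phi_a(x)$ and $\phi_{Ta}(x)$ are $T$-generic on a large piece of $X$---is exactly the paper's strategy. The gap is in the mechanism you propose for steps~(2) and~(4).

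Your claim that ``the condition that two of these have equal valuation \ldots cuts out the zero set of a nonzero additive polynomial of bounded $q$-degree'' is not correct. The failure of $T$-genericity for $y$ is the \emph{metric} condition $|\phi_T(y)|<\max_i|a_iy^{q^i}|$; equivalently, $y$ lies in an open ball around some nonzero $\xi\in\phi[T]$. Equality of valuations of two monomials is a condition of the form $|y|=\text{const}$, not an algebraic equation in $y$ (or in $a$), so it does not cut out the kernel of an additive polynomial. Consequently there is no reason for the preimage of the generic locus under the additive map $a\mapsto\phi_{Ta}(x)$ to be a subgroup of $X$, and in general it is not: two generic values can sum to a non-generic one. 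So the one-shot ``pass to an index-$q^{2r\deg(T)}$ subgroup on which everything is generic'' step is unjustified, and your sharper index bound $q^{4r\deg(T)}$ is not available.

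The paper handles this as follows. First, it observes that it suffices to produce a large \emph{subset} $Y$ on which the final inequality holds, since the ultrametric inequality $-\log|u+w|\geq\min\{-\log|u|,-\log|w|\}$ then propagates the conclusion to the subgroup $Y$ generates. Second, it covers $\phi(\CC_v)$ by the regions $Z_\xi$, $\xi\in\phi[T]$, where $Z_0$ is the generic locus and $Z_\xi$ for $\xi\neq 0$ is the ball $\{y:|y-\xi|<|\xi|\}$. At each stage one pigeonholes the current set $X_i$ into the $q^{2R}$ cells indexed by pairs $(\xi_1,\xi_2)$ according to where $(\phi_a(x),\phi_{Ta}(x))$ lands, keeps the largest cell, and if $(\xi_1,\xi_2)\neq(0,0)$ \emph{translates} by a fixed element of that cell. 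The point of the translation is that an integer counter $N_1$ (or $N_T$), recording how many torsion points lie below the current maximum of $|\phi_a(x)|$ (resp.\ $|\phi_{Ta}(x)|$) over the set, strictly drops; since $0\le N_1,N_T\le R$, one reaches $(\xi_1,\xi_2)=(0,0)$ after at most $2R$ iterations, losing a factor $q^{2R}$ each time. This is what forces the exponent $4R^2=4r^2\deg(T)^2$ rather than $4R$.
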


\begin{proof}
To begin, we note that it suffices to construct a sub\emph{set} $Y\subseteq X$ satisfying the criteria, since the inequality
\[-\log|x+y|\geq\min\{-\log|x|, -\log|y|\}\]
will ensure that the subgroup generated by $Y$ will also satisfy the criteria.  Second of all,  we note that it suffices to find a subset $Y\subseteq X$ of the appropriate size, such that for all $a\in Y$, both $\phi_a(x)$ and $\phi_{Ta}(x)$ are $T$-generic, since $\log|\phi_{T^2a}(x)|\leq \log B_T$ implies $\log|\phi_{Ta}(x)|\leq \log B_T$, and so we may apply Lemma~\ref{lem:genericbound} to $\phi_{Ta}(x)$ and then Lemma~\ref{lem:phiTsmall} to obtain the requisite bound on $|\phi_a(x)|$.

For any set $W\subseteq A$, we define $N_1(W)$ so that $q^{N_1(W)}$ is the number of elements $\xi\in\phi[T]$ such that $|\xi|\leq \max\{|\phi_a(x)|:a\in W\}$.
Note that, by examining the Newton polygon of $\phi_T(x)$, we see that $N_1(W)$ is always an integer, and $0\leq N_1(W)\leq R$ as long as $W$ is non-empty, where as usual we take $R=r\deg(T)$.
Similarly, define
$N_T(W)=N_1(TW)$, that is, $q^{N_T(W)}$ is the number of $\xi\in\phi[T]$ such that $|\xi|\leq \max\{|\phi_{Ta}(x)|:a\in W\}$.

For $\xi\in\phi[T]$, define
\[Z_\xi=\{y\in\phi(\CC_v):|y-\xi|<|\xi|=|y|\}\]
if $\xi\neq 0$, and
\[Z_0=\{y\in\phi(\CC_v):y\text{ is generic}\}.\]
Note that $\phi(\CC_v)=\bigcup_{\xi\in\phi[T]}Z_\xi$.
Let $X_1=X$, and suppose that for $i\geq 1$ we have constructed a set $X_i$.  For each $(\xi_1, \xi_2)\in \phi[T]\times\phi[T]$, let
\[X_{i, \xi_1, \xi_2}=\left\{a\in X_i:\phi_a(x)\in Z_{\xi_1}\text{ and }\phi_{Ta}(x)\in Z_{\xi_2}\right\}.\]
Clearly the sets $X_{i, \xi_1, \xi_2}$ cover $X_i$, and there are $(q^R)^2$ of them, so there is some pair $(\xi_1, \xi_2)$ such that $\# X_{i, \xi_1, \xi_2}\geq q^{-2R}\# X_i$.

We consider several cases.

\noindent\textbf{Case 1:} $\xi_1\neq 0$.

Choose some $a\in X_{i, \xi_1, \xi_2}$, and let $X_{i+1}=X_{i, \xi_1, \xi_2}-a$, so that $\# X_{i+1}=\# X_{i, \xi_1, \xi_2}$.  Since
\[\max\{|\phi_{Tb}(x)-\phi_{Ta}(x)|:b\in X_{i, \xi_1, \xi_2}\}\leq\max\{|\phi_{Tb}(x)|:b\in X_i\},\]
we see that $N_T(X_{i+1})\leq N_T( X_{i})$.  On the other hand,
\[|\phi_b(x)-\phi_a(x)|=|\phi_b(x)-\xi_1+\phi_a(x)-\xi_1|<|\xi_1|=|\phi_b(x)|=|\phi_a(x)|\]
for all $a, b\in X_{i, \xi_1, \xi_2}$, by definition.  It follows that
\[N_1(X_{i+1})<N_1(X_{i, \xi_1, \xi_2})\leq N_1(X_i).\]
In particular, since $N_1(W)$ is a non-negative integer for any non-empty $W\subseteq A$, the present case can arise for at most $R$ values of $i$.

\noindent\textbf{Case 2:} $\xi_1=0$, $\xi_2\neq 0$.

In this case we again choose some $a\in X_{i, \xi_1, \xi_2}$, and let $X_{i+1}=X_{i, \xi_1, \xi_2}-a$.  Arguments just as in Case 1 show that here $N_1(X_{i+1})\leq N_1(X_i)$, while $N_T(x_{i+1})<N_t(X_i)$.  In particular, this case can also arise for at most $R$ values of $i$.

\noindent\textbf{Case 3:} $\xi_1=\xi_2=0$.

In this case, we may take $Y=X_{i, 0, 0}$, since then $\phi_a(x)$ and $\phi_{Ta}(x)$ are both $T$-generic for all $a\in Y$, by construction.  We then have that $\# Y\geq q^{-2Ri}\# X$, and we have seen that we arrive in this case with $i\leq 2R$.  This proves the lemma.
\end{proof}

Before proceeding to the next section, which contains the proof of the main results, we give an explicit bound on the difference between the canonical local height $\lambda_\phi(x)$ and the naive local height $\log^+|x^{-1}|$.  This will be used below to estimate the difference between the naive and canonical global heights.

\begin{lemma}\label{lem:localheightdiff}
Let $\phi/\CC_v$ be a Drinfeld module of rank $r$.
We have, for all $x\in \CC_v$, and any non-constant $T\in A$,
\begin{multline}\label{eq:localheightdiff}
-\frac{|T|_\infty^r}{(|T|_\infty^r-1)^2}\log^+|T|-\frac{|T|_\infty^r}{|T|_\infty^r-1}j_{\phi_T, v}-\max\{0, -c(\phi)\}\\
\leq \lambda_\phi(x)-\log^+|x^{-1}|\\
\leq j_{\phi_T, v}+\frac{1}{|T|_\infty^r-1}\log^+|T|+\max\{0, c(\phi)\}
.\end{multline}
\end{lemma}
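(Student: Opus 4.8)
The statement to prove is Lemma~\ref{lem:localheightdiff}, a two-sided bound on $\lambda_\phi(x)-\log^+|x^{-1}|$. Recalling the definition $\lambda_\phi(x)=\log|x^{-1}|+G_\phi(x)+c(\phi)$, we have
\[
\lambda_\phi(x)-\log^+|x^{-1}| = G_\phi(x) + c(\phi) + \log|x^{-1}| - \log^+|x^{-1}| = G_\phi(x)+c(\phi)-\log^+|x|,
\]
so the whole lemma reduces to controlling $G_\phi(x)-\log^+|x|$ from above and below (the $c(\phi)$ then contributes the $\max\{0,\pm c(\phi)\}$ terms after we split into the cases $c(\phi)\geq 0$ and $c(\phi)<0$). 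So the plan is: first rewrite the target quantity in terms of $G_\phi$ and $c(\phi)$ using the definition of $\lambda_\phi$; then bound $G_\phi(x)-\log^+|x|$ using Lemma~\ref{lem:greensprops}(ii),(iii) together with the defining functional equation $G_\phi(\phi_T(x))=|T|_\infty^r G_\phi(x)$; finally reassemble, separating the sign of $c(\phi)$.

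\textbf{Upper bound.} For the upper bound on $G_\phi(x)-\log^+|x|$: I expect $G_\phi$ to exceed $\log^+|x|$ by at most roughly $j_{\phi_T,v}+\frac{1}{|T|_\infty^r-1}\log^+|T|$. The natural route is to estimate $\log^+|\phi_T(x)|$ in terms of $\log^+|x|$: since $\phi_T(x)=Tx+a_1x^q+\dots+\Delta x^{q^R}$ with $R=r\deg(T)$, one has $\log|\phi_T(x)|\leq \max_i(\log|a_i|+q^i\log^+|x|)$, and using the definition $j_{\phi_T,v}=\max_j\frac{1}{q^j-1}\log|a_j|+c(\phi)$ one gets $\log|a_i|\leq (q^i-1)(j_{\phi_T,v}-c(\phi))+\log^+|T|$-type bounds (being slightly careful about $i=0$, where $a_0=T$). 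Feeding this into $G_\phi(x)=\frac{1}{|T|_\infty^r}\lim_N |T|_\infty^{-rN}\log^+|\phi_T^N(x)|$ — or more cleanly, iterating the estimate $\log^+|\phi_T(y)|\leq |T|_\infty^r\log^+|y|+C$ with $C=(\text{the bound on }\log|a_i|)$, summing the geometric series $\sum_N |T|_\infty^{-rN}$, and comparing with the corresponding lower iteration — yields $G_\phi(x)\leq \log^+|x|+\frac{C}{|T|_\infty^r-1}$ with $C\approx j_{\phi_T,v}\cdot(\text{something})+\log^+|T|$, matching the claimed shape after one tracks the $q^i$ versus $q^i-1$ discrepancy.

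\textbf{Lower bound.} For the lower bound, $G_\phi(x)\geq\log^+|x|$ can fail, so one needs the more delicate estimate, and this is where the $\frac{|T|_\infty^r}{(|T|_\infty^r-1)^2}\log^+|T|$ and $\frac{|T|_\infty^r}{|T|_\infty^r-1}j_{\phi_T,v}$ terms — with their squared denominators — come from. Here I would again use a one-step comparison $\log^+|\phi_T(x)|\geq |T|_\infty^r\log^+|x|-C'$ valid when $|x|$ is large (in the basin, via Lemma~\ref{lem:greensprops}(iii) this is exact), and for small $|x|$ note $G_\phi(x)=0$ so the bound reduces to $-c(\phi)\geq$ (something), handled by the $\max\{0,-c(\phi)\}$ term and the non-negativity of $j_{\phi_T,v}$ and $\log^+|T|$. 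The squared denominators strongly suggest that this is exactly the place where one invokes the generic-point machinery of Lemma~\ref{lem:genericbound} (whose conclusion features $\frac{1}{(q^{R}-1)^2}\log^+|T^{-1}|$): for a point that is \emph{not} $T$-generic one cannot directly relate $|\phi_T(x)|$ to $\max_i|a_ix^{q^i}|$, so one passes to $\phi_T(x)$ or $\phi_{T^2}(x)$ until genericity holds and absorbs the resulting loss, which compounds the $\frac{1}{q^R-1}$ factor into $\frac{1}{(q^R-1)^2}$.

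\textbf{Main obstacle.} The routine parts are the geometric-series bookkeeping and the case split on the sign of $c(\phi)$; these are straightforward if tedious. The genuine difficulty is the lower bound for non-generic $x$: making precise the argument that $G_\phi(x)$ cannot be too much smaller than $\log^+|x|$ when the Newton polygon of $\phi_T$ is lopsided, which is precisely what forces the appearance of $j_{\phi_T,v}$ (not just $\log^+|T|$) and the squared denominator. I expect to handle this by reducing, via the substitution $y=\phi_T(x)$ and Lemma~\ref{lem:greensprops}(ii), to the case where the relevant point is $T$-generic and then quoting Lemma~\ref{lem:genericbound}, together with the exact formula $c(\phi)=\frac{1}{q^R-1}\log|\Delta^{-1}|$ and the fact $\lambda_\phi$ is a $\CC_v$-isomorphism invariant (Lemma~\ref{lem:lambdaprops}(ii)) to normalize, if needed. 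Tracking the constants carefully through this reduction so that they land exactly on the stated right-hand side is the bulk of the work.
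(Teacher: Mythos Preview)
Your reduction to bounding $G_\phi(x)-\log^+|x|$ and your upper-bound argument are correct and match the paper's proof: iterate the one-step estimate $\log^+|\phi_T(y)|\leq q^R\log^+|y|+(q^R-1)j_{\phi_T,v}+\log^+|T|$ and sum the geometric series. The paper normalizes first to $\Delta=1$ (so $c(\phi)=0$) rather than carrying $c(\phi)$ along, and recovers the $\max\{0,\pm c(\phi)\}$ terms at the end via $\log^+|x|-\log^+|\alpha^{-1}|\leq\log^+|\alpha x|\leq\log^+|x|+\log^+|\alpha|$ with $\alpha^{q^R-1}=\Delta$; this is equivalent to your case split on the sign of $c(\phi)$.

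Your lower-bound plan, however, has a genuine gap and a misdirection. The gap: your dichotomy ``$|x|$ large (exact formula) versus $|x|$ small ($G_\phi(x)=0$)'' is incomplete, because $|x|\leq B_T$ does \emph{not} imply $G_\phi(x)=0$; there are points with $|x|\leq B_T$ whose orbit eventually escapes. The misdirection: the squared denominator does \emph{not} come from Lemma~\ref{lem:genericbound} or any genericity argument (note that lemma involves $\log^+|T^{-1}|$, not $\log^+|T|$). The paper's lower bound is in fact entirely elementary and parallel to the upper bound. After normalizing to $\Delta=1$, one observes that the one-step inequality
\[
q^{-R}\log^+|\phi_T(y)|-\log^+|y|\geq -\log B_T
\]
holds for \emph{every} $y$: if $|y|>B_T$ it is an equality (with right side $0$), and if $|y|\leq B_T$ it is trivial since $\log^+\geq 0$ and $\log^+|y|\leq\log B_T$. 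Iterating this and summing $\sum_{n\geq 0}q^{-Rn}=\frac{q^R}{q^R-1}$ gives $G_\phi(x)-\log^+|x|\geq -\frac{q^R}{q^R-1}\log B_T$. The squared denominator then appears simply because (still with $\Delta=1$) one has the elementary bound $\log B_T\leq \frac{1}{q^R-1}\log^+|T|+j_{\phi_T,v}$, so the two factors of $\frac{1}{q^R-1}$ compound on the $\log^+|T|$ term. No appeal to genericity or to Lemmas~\ref{lem:genericbound}--\ref{lem:phiTsmall} is needed.
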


\begin{proof}
Write
\[\phi_T(x)=Tx+a_1x^q+\cdots+\Delta a^{q^{r\deg(T)}},\]
consider first the case in which $\Delta=1$, and write $R=r\deg(T)$ as usual.
By the triangle inequality, we have
\begin{eqnarray*}
\log|\phi_T(x)|&\leq& \log\max\left\{|Tx|, |a_1x^q|, ..., |x^{q^R}|\right\}\\
&\leq& \log\max\{|T|, |a_1|, ...,1\}+q^R\log^+|x|\\
&\leq& \log\max\{|a_1|, ..., 1\}+\log^+|T|+q^R\log^+|x|\\
&\leq& (q^R-1)j_{\phi_T, v}+\log^+|T|+q^R\log^+|x|
\end{eqnarray*}
It follows by induction that
\[G_\phi(x)- \log^+|x|\leq j_{\phi_T, v}+\frac{1}{q^R-1}\log^+|T|.\]

On the other hand, we have
\[\log B_T=\max_{0\leq i\leq R}\left\{0, \frac{1}{q^R-q^i}\log|a_i|\right\}\leq \frac{1}{q^R-1}\log^+|T|+j_{\phi_t, v},\]
since $q\geq 2$.  If $\log|x|>\log B_T$, then we have
\[q^{-R}\log|\phi_T(x)|=q^{-R}\log| x^{q^R}|=\log|x|.\]

If, on the other hand, $\log|x|\leq \log B_T$, then we have
\[q^{-R}\log^+|\phi(x)|- \log^+|x|\geq -\log B_T\geq -\frac{1}{q^R-1}\log^+|T|-j_{\phi_T, v}.\]
By induction, we have
\begin{eqnarray*}
q^{-nR}\log^+|\phi_T(x)|-\log^+|x|&\geq& -\left(1+\frac{1}{q^R}+\cdots\right)\left(\frac{1}{q^R-1}\log^+|T|+j_{\phi_T, v}\right)\\
&=&-\frac{q^R}{(q^R-1)^2}\log^+|T|-\frac{q^R}{q^R-1}j_{\phi_T, v}.
\end{eqnarray*}
So we have
\[-\frac{q^R}{(q^R-1)^2}\log^+|T|-\frac{q^R}{q^R-1}j_{\phi_T, v}\leq G_\phi(x)- \log^+|x|\leq j_{\phi_T, v}+\frac{1}{q^R-1}\log^+|T|.\]

Now, if $\Delta\neq 1$, we may apply the above bound to obtain a result for $\psi=\alpha \phi\alpha^{-1}$, where $\alpha^{q^r-1}=\Delta$.  Replacing $x$ with $\alpha x$ in the above, 
and noting that $G_\phi(x)=G_{\psi}(\alpha x)$, $j_{\phi_T, v}=j_{\psi_T, v}$ and $c(\phi)=-\log|\alpha| $
we have
\begin{multline*}
-\frac{q^R}{(q^R-1)^2}\log^+|T|-\frac{q^R}{q^R-1}j_{\phi_T, v}-\log^+|\alpha^{-1}|\\\leq G_\phi(x)- \log^+| x|\\\leq j_{\phi_T, v}+\frac{1}{q^R-1}\log^+|T|+\log^+|\alpha|.
\end{multline*}
from the inequality
\[\log^+|x|-\log^+|\alpha^{-1}|\leq \log^+|\alpha x|\leq \log^+|x|+\log^+|\alpha|.\]
We now note that $\log|\alpha|=-c(\phi)$, and that
\[\lambda_\phi(x)-\log^+|x^{-1}|=G_\phi(x)-\log^+|x|+c(\phi),\]
and hence we have \eqref{eq:localheightdiff}.
\end{proof}

In Lemma~\ref{lem:lambdaprops} we saw that $\lambda_{\phi}(x)$ is non-negative in the case of good reduction.  To establish Theorem~\ref{th:jplaces}, however, we will need a stronger statement in the case of bad reduction which is potentially good, since in this case we have $\Disc_{\phi, v}>0$.
\begin{lemma}\label{lem:potgoodred}
Suppose that $v$ is a finite place, that $\phi/L$ has potentially good reduction, and that $\Disc_{\phi, v}>0$.  Then we have
\[\lambda_\phi(x)+G_{\phi}(x)\geq\frac{1}{d-1} \Disc_{\phi, v}\]
for all $x\in \phi(L)$, where $d(A, r)$ is as defined in Proposition~\ref{prop:disclocal}.
\end{lemma}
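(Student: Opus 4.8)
The plan is to reduce the left-hand side to an exact quantity and then squeeze the bound out of the fact that $\Disc_{\phi,v}$ lies in a fixed sublattice of $\RR$, bounded away from $0$ and from $\deg(v)$.

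First I would obtain an exact formula for $\lambda_\phi(x)+G_\phi(x)$ using potential good reduction. As in the proof of Lemma~\ref{lem:lambdaprops}(iv) there are a Drinfeld module $\eta/\CC_v$ of good reduction and an $\alpha\in\CC_v^\ast$ with $\eta\alpha=\alpha\phi$; then $\lambda_\phi(x)=\lambda_\eta(\alpha x)=\log^+|(\alpha x)^{-1}|$ for $x\neq 0$ by part (iii), while $G_\phi(x)=G_\eta(\alpha x)=\log^+|\alpha x|$ since $G_\eta(z)=\log^+|z|$ in the good-reduction case. Letting $|z|\to\infty$ in $\log|z|-G_\phi(z)$ identifies $\log|\alpha|_v=-c(\phi)$, so that for every nonzero $x$
\[\lambda_\phi(x)+G_\phi(x)=\log^+|(\alpha x)^{-1}|+\log^+|\alpha x|=\bigl|\log|\alpha x|_v\bigr|=\bigl|\log|x|_v-c(\phi)\bigr|;\]
the claimed inequality is trivial at $x=0$, where $\lambda_\phi$ has a pole.

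Next I would locate $\Disc_{\phi,v}$ and $c(\phi)$ in $\RR$. Writing $R=r\deg(T_1)$, the formula $c(\phi)=\tfrac{1}{q^{R}-1}\log|a^{-1}|_v$ for the leading coefficient $a\in L^\ast$ of $\phi_{T_1}$, together with $\log|a|_v\in\deg(v)\ZZ$ and $(q^{R}-1)\mid d(A,r)$, shows $c(\phi)\in\tfrac{\deg(v)}{d}\ZZ$; the same computation applied to a minimal model $\psi$ gives $\Disc_{\phi,v}=c(\psi)\in\tfrac{\deg(v)}{d}\ZZ$. Since $\psi$ is $L$-isomorphic to $\phi$ by a scalar $\beta\in L^\ast$, and $c$ changes by $\pm\log|\beta|_v\in\deg(v)\ZZ$ under such an isomorphism (Lemma~\ref{lem:lambdaprops}(ii)), one has $\Disc_{\phi,v}\equiv c(\phi)\MOD{\deg(v)\ZZ}$. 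Finally, potential good reduction at the finite place $v$ forces $j_{\phi,v}=0$, so Proposition~\ref{prop:disclocal} gives $0<\Disc_{\phi,v}<\deg(v)$; here one may assume $d\geq 2$, since $d=1$ would force $\Disc_{\phi,v}\in\deg(v)\ZZ\cap(0,\deg(v))=\varnothing$, making the statement vacuous. Thus $\Disc_{\phi,v}=\tfrac{k}{d}\deg(v)$ for some integer $1\leq k\leq d-1$.

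It then remains to combine these. For $x\in\phi(L)\setminus\{0\}$ we have $\log|x|_v\in\deg(v)\ZZ$, hence $\log|x|_v-c(\phi)\in\deg(v)\ZZ-\Disc_{\phi,v}=\tfrac{\deg(v)}{d}(d\ZZ-k)$, which is a nonzero multiple of $\tfrac{\deg(v)}{d}$ (as $k\not\equiv 0\MOD d$), of absolute value at least $\tfrac{\deg(v)}{d}\min\{k,d-k\}$. A one-line check using $d\geq 2$ and $k\leq d-1$ yields $\min\{k,d-k\}\geq\tfrac{k}{d-1}$, and with the formula of the first step this gives $\lambda_\phi(x)+G_\phi(x)=|\log|x|_v-c(\phi)|\geq\tfrac{\deg(v)}{d}\cdot\tfrac{k}{d-1}=\tfrac{1}{d-1}\Disc_{\phi,v}$. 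I expect the only real obstacle to be the middle step: it is not a single inequality but a ``denominator'' argument, in which $L$-minimality supplies the congruence $c(\phi)\equiv\Disc_{\phi,v}\MOD{\deg(v)\ZZ}$ while Proposition~\ref{prop:disclocal} confines $\Disc_{\phi,v}$ to $\tfrac{\deg(v)}{d}\ZZ\cap(0,\deg(v))$; the sign ambiguity in the transformation of $c$ under rescaling is harmless, since $\deg(v)\ZZ$ is symmetric.
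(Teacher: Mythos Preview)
Your proof is correct and follows essentially the same route as the paper's: both twist to a good-reduction model to obtain $\lambda_\phi(x)+G_\phi(x)=\bigl|\log|\alpha x|\bigr|$, and then exploit that $v(x)\in\ZZ$ while $v(\alpha)$ lies in $\tfrac{1}{d}\ZZ$ and is noninteger to bound this quantity from below. The only organizational difference is that the paper first replaces $\phi$ by a minimal integral model (so that $c(\phi)=\Disc_{\phi,v}$ outright, making your congruence step unnecessary) and then treats the two signs of $\log|\alpha x|$ as separate cases, whereas you keep $\phi$ general, record $c(\phi)\equiv\Disc_{\phi,v}\pmod{\deg(v)\ZZ}$, and handle both cases in a single lattice computation; the underlying arithmetic is identical.
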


\begin{proof}
Assume, without loss of generality, that $\phi$ is a minimal integral model of itself, and choose an $\alpha\in\CC_v$ such that $\psi=\alpha\phi\alpha^{-1}$ has good reduction.  Note that, in this case, $\Disc_{\phi/L}=c(\phi)=-\log|\alpha|$, since $c(\psi)=0$. Furthermore, since $\psi$ has good reduction, we see that $j_{\phi, v}=j_{\psi, v}=0$, and hence by Proposition~\ref{prop:disclocal} it must hold that $0< v(\alpha)<1$ (the case $v(\alpha)=0$ is excluded by our hypotheses).

  From the proof of Lemma~\ref{lem:lambdaprops}, we see that
$\lambda_{\phi}(x)=\lambda_{\psi}(\alpha x)$
for all $x\in L$, and $G_\phi(x)=G_{\psi}(\alpha x)$.  First suppose that $|\alpha x|\leq 1$.  Then, since $v(x)\in\ZZ$ and $0< v(\alpha)<1$, we have
\[\lambda_{\phi}(x)=\lambda_{\psi}(\alpha x)=\log^+|\alpha^{-1}x^{-1}|=(v(\alpha)+v(x))\deg(v)\geq v(\alpha)\deg(v)=\Disc_{\phi, v}.\]
The claimed inequality follows from the fact that $G_\phi(x)\geq 0$ for all $x$.

On the other hand, suppose that $|\alpha x|>1$, and note
that $0< v(\alpha)<1$ is a rational number with denominator at most $d=d(A, r)$.  It follows, since $v(x)$ is an integer for $x\in L$, that $v(\alpha x)<0$ implies
\[v(\alpha x)\leq v(\alpha)-1\leq -\frac{1}{d}\leq -\frac{1}{d-1}v(\alpha).\]
Since $\psi$ has good reduction, we obtain
\[G_\phi(x)=G_\psi(\alpha x)=\log^+|\alpha x|\geq \frac{1}{d-1}\log|\alpha|=\frac{1}{d-1}\Disc_{\phi, v}.\]
\end{proof}

Finally, we prove a lower bound on the Green's function associated to $\phi$, to be used in Section~\ref{sec:global}.

\begin{lemma}\label{lem:greenslower}
If $|x|>B_{T}$, then
\[G_{\phi}(x)\geq \left(\frac{q-1}{q^{r\deg(T)}-1}\right)j_{\phi_T, v}.\]
\end{lemma}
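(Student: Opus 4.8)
The plan is to reduce the statement to an easy consequence of the basin formula in Lemma~\ref{lem:greensprops}(iii). Recall that $j_{\phi_T,v}=\max_{1\leq j\leq R}\{\frac{1}{q^j-1}\log|a_j|\}+c(\phi)$, where we write $R=r\deg(T)$ and $\phi_T(x)=Tx+a_1x^q+\cdots+\Delta x^{q^R}$. If $j_{\phi_T,v}=c(\phi)$, i.e. all the coefficient terms $\frac{1}{q^j-1}\log|a_j|$ are $\leq 0$, then the right-hand side equals $\frac{q-1}{q^R-1}c(\phi)$, which is $\leq c(\phi)$ since $q-1\leq q^R-1$; and for $|x|>B_T$ we have $G_\phi(x)=\log|x|-c(\phi)\geq \log B_T - c(\phi)$, so the bound follows as soon as one checks $\log B_T\geq \frac{q-1}{q^R-1}c(\phi)+c(\phi)$, which is immediate from the definition of $B_T$ together with $c(\phi)=\frac{1}{q^R-1}\log|\Delta^{-1}|$. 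So the real content is the case where the maximum defining $j_{\phi_T,v}$ is attained at some coefficient index $k$ with $1\leq k\leq R$.

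In that case, let $k$ be an index with $j_{\phi_T,v}=\frac{1}{q^k-1}\log|a_k|+c(\phi)$. Since $|x|>B_T$, Lemma~\ref{lem:greensprops}(iii) (as recorded after the definition of $B_T$) gives $G_\phi(x)=\log|x|-c(\phi)$, and the defining inequality for the basin also forces $|a_k x^{q^k}|<|\Delta x^{q^R}|$, equivalently $\log|x|>\frac{1}{q^R-q^k}\log|a_k/\Delta|$. First I would rewrite this lower bound on $\log|x|$ in terms of $j_{\phi_T,v}$: using $\log|a_k|=(q^k-1)(j_{\phi_T,v}-c(\phi))$ and $\log|\Delta^{-1}|=(q^R-1)c(\phi)$, one gets
\[
\log|x| > \frac{(q^k-1)(j_{\phi_T,v}-c(\phi))+(q^R-1)c(\phi)}{q^R-q^k}.
\]
Subtracting $c(\phi)$ from both sides and simplifying the numerator (the $c(\phi)$ terms combine to $(q^R-q^k)c(\phi)$, which cancels against the subtracted $c(\phi)$) yields
\[
G_\phi(x)=\log|x|-c(\phi) > \frac{q^k-1}{q^R-q^k}\,j_{\phi_T,v}.
\]
So it remains to verify the elementary inequality $\frac{q^k-1}{q^R-q^k}\geq \frac{q-1}{q^R-1}$ for $1\leq k\leq R$, which is a routine comparison of positive quantities (cross-multiplying, it reduces to $(q^k-1)(q^R-1)\geq (q-1)(q^R-q^k)$, and expanding, to $q^{R+k}+q-q^k-q^R\geq q^{R+1}-q^{k+1}-q^R+q^k$, i.e. $q^{R+k}+q+q^{k+1}\geq q^{R+1}+2q^k$; since $q^{R+k}\geq q^{R+1}$ and $q^{k+1}+q\geq 2q^k$ by AM–GM when $k\geq 1$, this holds).

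I do not anticipate a serious obstacle here; the only mildly delicate point is bookkeeping the two possibilities for where the maximum defining $j_{\phi_T,v}$ is attained (at $c(\phi)$ versus at a genuine coefficient), and making sure the basin condition $|x|>B_T$ is strong enough to cover the first case — which it is, essentially by construction of $B_T$. One should also double-check the edge case $k=R$ is excluded (or handled trivially), since then $q^R-q^k=0$; but $k=R$ would mean $\frac{1}{q^R-1}\log|a_R|\geq \frac{1}{q^j-1}\log|a_j|$ for all $j$, and $a_R=\Delta$, so $j_{\phi_T,v}=\frac{1}{q^R-1}\log|\Delta|+c(\phi)=0$ by the formula for $c(\phi)$, and the inequality is vacuous. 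With those cases dispatched, the chain of inequalities above completes the proof.
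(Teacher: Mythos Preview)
Your argument is correct and follows essentially the same route as the paper's---both use the basin formula $G_\phi(x)=\log|x|-c(\phi)$ for $|x|>B_T$, extract the inequality $\log|x|>\frac{1}{q^R-q^k}\log|a_k/\Delta|$ from Lemma~\ref{lem:greensprops}(iii), and then verify the elementary comparison $\frac{q^k-1}{q^R-q^k}\geq\frac{q-1}{q^R-1}$; the only cosmetic difference is that the paper first normalizes to $\Delta=1$ and manipulates the full maximum rather than fixing the maximizing index $k$. Two minor points: your case~(a) is redundant, since case~(b) together with the $k=R$ observation already handles every situation (the ``i.e.'' identifying $j_{\phi_T,v}=c(\phi)$ with all coefficient terms being $\leq 0$ is also not quite right); and there is a small slip in your expansion---the constant term of $(q^k-1)(q^R-1)$ is $1$, not $q$---but the needed inequality $q^{k+1}+1\geq 2q^k$ is just $q^k(q-2)+1\geq 0$.
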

\begin{proof}
It follows from Lemma~\ref{lem:greensprops}~(iii) that for all $x$ with $|x|>B_{T}$,  we have
\[G_{\phi}(x)=\log|x|-c(\phi).\]
Note that, by changing coordinates, it suffices to prove the statement in the case where $\Delta=1$.  Then we have, for $|x|>\log B_{T}$,
\begin{eqnarray*}
G_{\phi}(x)&=&\log|x|-c(\phi)\\
&> & \max\left\{\frac{1}{q^{r\deg(T)}-1}\log|T|, \frac{1}{q^{r\deg(T)}-q}\log|a_1|, ..., \frac{1}{q^{r\deg(T)}-q^{r\deg(T)-1}}\log|a_{r\deg(T)-1}|, 0\right\}\\
&\geq&\max\left\{\frac{1}{q^{r\deg(T)}-q}\log|a_1|, ..., \frac{1}{q^{r\deg(T)}-q^{r\deg(T)-1}}\log|a_{r\deg(T)-1}|, 0\right\}\\
&\geq &\frac{q-1}{q^{r\deg(T)}-q}\max\left\{\frac{1}{q-1}\log|a_1|, ..., \frac{1}{q^{r\deg(T)-1}-1}\log|a_{r\deg(T)-1}|, 0\right\}\\
&=&\left(\frac{q-1}{q^{r\deg(T)}-1}\right)j_{\phi_T, v}.
\end{eqnarray*}
\end{proof}

%%%%%%%%%%%%%%%%%%%%%%%%%%%%%%%%%%%%%%%%%%%%%%%%%
%%%%%%%%%%%%%%%%%%%%%%%%%%%%%%%%%%%%%%%%%%%%%%%%%
%%%%%%%%%%%%%%%%%%%%%%%%%%%%%%%%%%%%%%%%%%%%%%%%%
%%%%%%%%%%%%%%%%%%%%%%%%%%%%%%%%%%%%%%%%%%%%%%%%%
%%%%%%%%%%%%%%%%%%%%%%%%%%%%%%%%%%%%%%%%%%%%%%%%%
%%%%%%%%%%%%%%%%%%%%%%%%%%%%%%%%%%%%%%%%%%%%%%%%%

\section{Drinfeld modules over global fields}
\label{sec:global}

Throughout this section, $L$ is a global $A$-field, as defined in Section~\ref{sec:prelims}, and $M_L$ is its set of places.
For any place $v\in M_L$, we will denote by $L_v$ the completion of $L$.  If $\phi/L$ is a Drinfeld module, then $\phi$ naturally gives rise to a Drinfeld module over $L_v$, which we will also call $\phi$.  The objects $c(\phi)$, $B_T$, $G_\phi$, and $\lambda_\phi$ defined in Section~\ref{sec:localfields} for $\phi/L_v$ will now be denoted by $c_v(\phi)$, $B_{T, v}$, $G_{\phi, v}$ and $\lambda_{\phi, v}$, to emphasize the dependence on the particular place $v\in M_L$, and similarly for finite extensions.  We recall the canonical height defined by Denis \cite{denis}, which decomposes (as per Poonen \cite{poonen}) as a sum of Green's functions.
\[\hat{h}_\phi(x)=\sum_{v\in M_{L'}}\frac{[L'_v:L_v]}{[L':L]}G_{\phi, v}(x).\]
  Note that it follows immediately from the product formula that
\begin{equation}\label{eq:hhatdef}\hat{h}_\phi(x)=\sum_{v\in M_L}\frac{[L'_v:L_v]}{[L':L]}\lambda_{\phi, v}(x),\end{equation}
so long as $x\neq 0$.  Again, this gives a well-defined function $\hat{h}_\phi:\phi(\reas{L})\to \RR$.

  In his original construction of the canonical height associated to a Drinfeld $A$-module, Denis \cite{denis} showed the this quantity differs from the Weil height by only a bounded amount.  The purpose of the first result of this section is to make this bound explicit, in terms of various quantities related to $\phi$
We define, for a finite extension $L'/L$ and a Drinfeld module $\phi/L'$,
\[h(\phi)=h(j_\phi)+\sum_{v\in M_{L'}}\frac{[L'_v:L_v]}{[L':L]}\max\{0, c_v(\phi)\}.\]
Note that if $T\in A$ is any non-constant element, and
\[\phi_T(x)=Tx+\cdots+a_{r\deg(T)}x^{q^{r\deg(T)}},\]
then we have
\[\sum_{v\in M_{L'}}\frac{[L'_v:L_v]}{[L':L]}\max\{0, c_v(\phi)\}=\frac{1}{|T|_\infty^r-1}h(a_{r\deg(T)}),\]
and so $h(\phi)$ is just the sum of the heights of two quantities related to $\phi$.  To justify using the terminology of heights, we prove a Northcott-type result for this quantity.
\begin{proposition}\label{prop:northcott}
For any quantities $B$ and $D$, there are only finitely many Drinfeld modules $\phi/E$ such that $E/L$ is a finite extension of degree at most $D$, and $h(\phi)\leq B$.
\end{proposition}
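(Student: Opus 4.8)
The plan is to use the fact that the bound $h(\phi)\le B$ simultaneously controls the $j$-invariant of $\phi$ and the ``size'' of $\phi$ within its $\overline{L}$-isomorphism class, and that each of these, together with the bound $[E:L]\le D$, permits only finitely many values.

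Fix one of the generators, say $T_1$, and write $\phi_{T_1}(x)=T_1x+\cdots+\Delta x^{q^{r\deg(T_1)}}$, with $\Delta\neq 0$. By the identity recorded just before the statement of the proposition,
\[h(\phi)=h(j_\phi)+\frac{1}{|T_1|_\infty^r-1}\,h(\Delta),\]
and since each summand on the right is non-negative, $h(\phi)\le B$ forces both $h(j_\phi)\le B$ and $h(\Delta)\le(|T_1|_\infty^r-1)B$. I would first dispose of the $j$-invariant. The point $j_\phi\in M_{A,r}(E)$ has degree at most $D$ over $L$ and height at most $B$; composing with the morphism $\Phi\colon M_{A,r}\to\PP^N$ of Section~\ref{sec:prelims}, whose coordinates are monomials defined over $\FF_q$, sends $j_\phi$ to a point of $\PP^N$ of degree at most $D$ over $L$ and of bounded height (indeed $\Phi$ scales the height by the fixed factor $\prod_i w_i$). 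By Northcott's theorem in $\PP^N$ there are only finitely many such points, and each has only finitely many $\Phi$-preimages, so there are only finitely many possibilities $j_0$ for $j_\phi$ — this is exactly the argument behind the Proposition of Section~\ref{sec:prelims}, now allowing the degree over $L$ to range up to $D$. For each of the finitely many $j_0$, fix once and for all a representative $(c_{i,j})$ in affine coordinates, chosen so that all of them lie in a single finite extension $L_0/L$.

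Next I would fix $j_0$ and bound the number of $\phi/E$, with $[E:L]\le D$ and $h(\phi)\le B$, for which $j_\phi=j_0$. By the isomorphism lemma of Section~\ref{sec:prelims}, the coefficients of any such $\phi$ have the form $a_{i,j}=\alpha^{q^j-1}c_{i,j}$ for some $\alpha\in\overline{L}^{*}$. The coordinate $c^{(1)}_{r\deg(T_1)}$ is non-zero, being a representative of the non-zero leading coefficient $\Delta$ of $\phi_{T_1}$, so $\alpha^{q^{r\deg(T_1)}-1}=\Delta\big/c^{(1)}_{r\deg(T_1)}$. This yields two bounds. First,
\[h(\alpha)=\frac{1}{q^{r\deg(T_1)}-1}\,h\!\left(\Delta\big/c^{(1)}_{r\deg(T_1)}\right)\le\frac{1}{q^{r\deg(T_1)}-1}\Bigl((|T_1|_\infty^r-1)B+h\bigl(c^{(1)}_{r\deg(T_1)}\bigr)\Bigr),\]
which depends only on $B$ and on $j_0$. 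Second, $\alpha^{q^{r\deg(T_1)}-1}\in EL_0$, so $[L(\alpha):L]\le(q^{r\deg(T_1)}-1)\,D\,[L_0:L]$. By Northcott's theorem there are only finitely many such $\alpha$, and each one, together with the fixed representative $(c_{i,j})$, determines all the coefficients $a_{i,j}$ and hence $\phi$ itself. Summing over the finitely many $j_0$ then proves the proposition.

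The only point requiring care is the second step: the $j$-invariant part of $h(\phi)$ by itself is not enough, since an entire one-parameter family of twists shares a single $j$-invariant; it is precisely the contribution $\frac{1}{|T_1|_\infty^r-1}h(\Delta)$ of the leading coefficient to $h(\phi)$ that cuts this family down to finitely many members. Beyond that, everything is routine Northcott bookkeeping, once one fixes the auxiliary field $L_0$ and notes that the exponents $q^j-1$ are absolutely bounded in terms of $A$ and $r$.
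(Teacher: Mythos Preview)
Your proof is correct and follows essentially the same strategy as the paper: bound $h(j_\phi)$ to get finitely many $\overline{L}$-isomorphism classes via Northcott on the weighted projective space, then use the bound on $h(\Delta)$ coming from the second summand of $h(\phi)$ to cut each class down to finitely many models. The only cosmetic difference is that the paper applies Northcott to the leading coefficients $a_{i,r\deg(T_i)}$ themselves and then observes that two isomorphic models sharing all leading coefficients differ by a root of unity, whereas you fix a single representative of each $j$-class and apply Northcott directly to the twisting parameter $\alpha$; these are equivalent bookkeepings of the same argument.
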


\begin{proof}
First, note that if $h(\phi)\leq B$ and $\phi$ is defined over an extension of degree at most $D$ of $L$, then $j_\phi$ is a point of bounded height and bounded algebraic degree, over $L$.  In particular, there can be only finitely many $\overline{L}$-isomorphism classes of Drinfeld modules satisfying $h(\phi)\leq B$ defined over extensions of degree at most $D$.  It suffices, then, to show that each such isomorphism class contains only finitely many instances $\phi$ with $h(\phi)\leq B$, and defined over extensions of degree at most $D$.  But if $\phi$ is such an instance, and
\[\phi_{T_i}(x)=T_ix+a_{i, 1}x^q+\cdots +a_{i, \deg(T_i)r}x^{q^{\deg(T_i)r}},\]
then $h(\phi)\leq B$ implies $h(a_{i, r\deg(T_i)})\leq (q^{r\deg(T_i)}-1)B$ for each $i$.  In particular, the elements $a_{i, r\deg(T_i)}$ reside in a set of bounded height and degree, for each $i$.  So there are only finitely many choices for each $i$.  But suppose that $\phi \alpha=\alpha \psi$ for some $\alpha\in\overline{L}$, and that
\[\phi_{T_i}(x)=T_ix+b_{i, 1}x^q+\cdots +a_{i, \deg(T_i)r}x^{q^{\deg(T_i)r}},\]
for all $i$.  Then $\alpha^{q\deg(T_i)-1}=1$ for all $i$.  In particular, there are only finitely many pairwise isomorphic $\phi/\overline{L}$ corresponding to a given choice of $j_{\phi, v}$ and a given choice of $a_{1, q\deg(T_1)}, ..., a_{m, q\deg(T_m)}$.
\end{proof}

We now state a slightly stronger version of Theorem~\ref{th:zimmer}, recalling that $L^{\mathrm{Sep}}\subseteq \reas{L}\subseteq \overline{L}$, with equality in the second inclusion when $L=K$, for example.
\begin{theorem}\label{th:heightdiff}
Let $L'/L$ be a finite extension contained in $\reas{L}$, and let $\phi/L'$ be a Drinfeld module of rank $r\geq 1$.  There exist constants $C_1$ and $C_2$, depending only on $A$, $L$, and $r$ such that
 for all $x\in \reas{L}$, we have
\[-C_1-\left(\frac{1}{1-q^{-r}}\right)h(\phi)\leq \hat{h}_\phi(x)-h(x)\leq h(\phi)+C_2.\]
\end{theorem}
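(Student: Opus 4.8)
The plan is to derive both inequalities by summing the local estimate of Lemma~\ref{lem:localheightdiff} over all places, after reducing to it through the decomposition \eqref{eq:hhatdef}. If $x=0$ the claim is trivial, since $\hat h_\phi(0)=h(0)=0$ while $h(\phi)\ge 0$ and the constants produced below are non-negative; so assume $x\ne 0$ and enlarge $L'$ to a finite extension $L''\subseteq\reas L$ containing $x$. Write $\langle f_v\rangle=\sum_{v\in M_{L''}}\frac{[L''_v:L_v]}{[L'':L]}f_v$. By \eqref{eq:hhatdef} we have $\hat h_\phi(x)=\langle\lambda_{\phi,v}(x)\rangle$, and the product formula gives $h(x)=h(x^{-1})=\langle\log^+|x^{-1}|_v\rangle$, so $\hat h_\phi(x)-h(x)=\langle\lambda_{\phi,v}(x)-\log^+|x^{-1}|_v\rangle$. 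Every sum below is finite, since each summand vanishes outside the places of bad reduction of $\phi$ together with the poles of the generator $T_i$ chosen in the next paragraph.

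The only real choice is which $T$ to feed into Lemma~\ref{lem:localheightdiff}: I would take $T=T_i$, a non-constant member of the fixed generating set $T_1,\dots,T_m$ of $A$ over $\FF_q$. With this choice $j_{\phi_{T_i},v}\le j_{\phi,v}$ at every place, since the maximum defining $j_{\phi_{T_i},v}$ runs over the coefficients of $\phi_{T_i}$, a subcollection --- with the same weights $q^j-1$ and the same additive term $c_v(\phi)$ --- of the coefficients defining $j_{\phi,v}$. Summing, and using that $\langle c_v(\phi)\rangle=0$ (the product-formula identity, applied to the leading coefficient of $\phi_{T_i}$, that already underlies \eqref{eq:hhatdef}), gives $\langle j_{\phi_{T_i},v}\rangle\le\langle j_{\phi,v}\rangle=h(j_\phi)$. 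The same vanishing rewrites $\langle\max\{0,-c_v(\phi)\}\rangle$ as $\langle\max\{0,c_v(\phi)\}\rangle$, and $\langle\log^+|T_i|_v\rangle=h(T_i)$. Summing the upper half of Lemma~\ref{lem:localheightdiff} then yields $\hat h_\phi(x)-h(x)\le h(j_\phi)+\langle\max\{0,c_v(\phi)\}\rangle+\frac{1}{|T_i|_\infty^r-1}h(T_i)=h(\phi)+C_2$, with $C_2=h(T_i)/(|T_i|_\infty^r-1)$.

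For the lower bound I would use that $\deg(T_i)\ge 1$, whence $\frac{|T_i|_\infty^r}{|T_i|_\infty^r-1}=1+\frac{1}{q^{r\deg(T_i)}-1}\le 1+\frac{1}{q^r-1}=\frac{1}{1-q^{-r}}$, together with $\frac{1}{1-q^{-r}}\ge 1$. Summing the lower half of Lemma~\ref{lem:localheightdiff}: the term $-\frac{|T_i|_\infty^r}{|T_i|_\infty^r-1}\langle j_{\phi_{T_i},v}\rangle$ is at least $-\frac{1}{1-q^{-r}}h(j_\phi)$; the term $-\langle\max\{0,-c_v(\phi)\}\rangle=-\langle\max\{0,c_v(\phi)\}\rangle$ is at least $-\frac{1}{1-q^{-r}}\langle\max\{0,c_v(\phi)\}\rangle$; and these combine to at least $-\frac{1}{1-q^{-r}}h(\phi)$. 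The leftover term is the constant $-\frac{|T_i|_\infty^r}{(|T_i|_\infty^r-1)^2}h(T_i)$, which we call $-C_1$, producing $\hat h_\phi(x)-h(x)\ge -C_1-\frac{1}{1-q^{-r}}h(\phi)$. Finally I would note that $C_1$ and $C_2$ depend only on the chosen $T_i$ (hence on $A$), on $r$ through $|T_i|_\infty^r$, and on $L$ only through the evaluation of $h(T_i)$; when $L=K$ this last dependence disappears, giving the sharper form in Theorem~\ref{th:zimmer}. The analytic substance is entirely contained in Lemma~\ref{lem:localheightdiff}; the only delicate point is the bookkeeping that lands on the exact coefficient $\frac{1}{1-q^{-r}}$ and on constants with the asserted dependence --- which is precisely what dictates the choice $T=T_i$ and the comparison $j_{\phi_{T_i},v}\le j_{\phi,v}$.
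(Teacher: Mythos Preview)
Your proof is correct and follows essentially the same route as the paper: both sum the local estimate of Lemma~\ref{lem:localheightdiff} over all places and use $j_{\phi_{T_i},v}\le j_{\phi,v}$ together with the product-formula identities $\langle c_v(\phi)\rangle=0$ and $\langle\max\{0,-c_v(\phi)\}\rangle=\langle\max\{0,c_v(\phi)\}\rangle$. The only organizational difference is that the paper splits finite and infinite places (using the equality $j_{\phi_{T_i},v}=j_{\phi,v}$ at finite $v$) and lets $C_1,C_2$ be sums over all generators $T_1,\dots,T_m$, whereas you work with a single $T_i$ throughout and obtain the slightly sharper constants $C_2=h(T_i)/(|T_i|_\infty^r-1)$ and $C_1=\frac{|T_i|_\infty^r}{(|T_i|_\infty^r-1)^2}h(T_i)$.
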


\begin{proof}
Let $T_1, ..., T_m$ generate $A$ as an $\FF_q$-algebra.
By Lemma~\ref{lem:localheightdiff}, we have the inequality \eqref{eq:localheightdiff} for each $T_i$, for each place $v\in L'$.  Note that if $v$ is a finite place, then $j_{\phi_{T_i}, v}=j_{\phi, v}$ for each $i$, and $\log^+|T_i|_v=0$, and so we have
\[
-\frac{1}{1-|T_i|_\infty^{-r}}j_{\phi, v}-\max\{0, -c_v(\phi)\}
\leq \lambda_\phi(x)-\log^+|x^{-1}|
\leq j_{\phi, v}+\max\{0, c_v(\phi)\}
\]
for each such place.  Note that, since $j_{\phi, v}\geq 0$, we may weaken the lower bound slightly by replacing $1/(1-|T|_\infty^{-r})$ by $1/(1-q^{-r})$, and similarly for the term $\max\{0, -c_v(\phi)\}$.  Thus we have
\[
-\frac{1}{1-q^{-r}}\left(j_{\phi, v}+\max\{0, -c_v(\phi)\}\right)
\leq \lambda_\phi(x)-\log^+|x^{-1}|
\leq j_{\phi, v}+\max\{0, c_v(\phi)\}
\]
at every finite place $v$.  Note that if $L$ has no infinite places, we may sum this over $v\in M_{L}$ to obtain the claimed inequality with $C_1=C_2=0$.

  At every infinite place, we similarly have
\begin{multline*}
-\frac{|T_i|_\infty^r}{(|T_i|_\infty^r-1)^2}\log^+|T_i|_v-\frac{1}{1-q^{-r}}\left(j_{\phi_{T_i}, v}+\max\{0, -c_v(\phi)\}\right)\\
\leq \lambda_\phi(x)-\log^+|x^{-1}|\\
\leq j_{\phi_{T_i}, v}+\max\{0, c_v(\phi)\}+\frac{1}{|T_i|_\infty^r-1}\log^+|T_i|_v
.\end{multline*}
Since $j_{\phi, v}=\max_{1\leq i\leq m } j_{\phi_{T_i}, v}$, summing over places gives the claimed bound with
\[C_1=\sum_{i=1}^m\frac{|T_i|_\infty^r}{(|T_i|_\infty^r-1)^2}\sum_{v\in  M_{L}^\infty}\log^+|T_i|_v\]
and
\[C_2=\sum_{i=1}^m\frac{1}{|T_i|_\infty^r-1}\sum_{v\in M_L}\log^+|T_i|_v\]

\end{proof}

\begin{remark}
Note that the constants $C_1$ and $C_2$ depend on $L$ only because of the generality in which we are working.  If we consider only those $L$ which are finite extensions of $K$, it makes sense to consider all heights relative to $K$ (in other words, to take $K$ as the ground field).  In this case the constants above depend only on $A$ and $r$.

If, on the other hand, $L/K$ is transcendental, then every place is a finite place, and the constants $C_1$ and $C_2$ vanish.
\end{remark}

Note that if $X/L$ is a curve, and $\phi/L(X)$ a Drinfeld module, then both $j_\phi$ and $a_{r\deg(T)}$, in the notation above, can be viewed as morphisms from $X$ to $M_{A, r}$ and $\PP^1$, respectively.  It follows that $h(\phi_t)=O(h(t))$.

\begin{corollary}
Let $X/L$ be a curve, let $h$ be a height on $X$ with respect to an ample divisor, and let $\phi/L(X)$ be a Drinfeld $A$-module.  Then we have
\[\hat{h}_{\phi_t}(x)=h(x)+O(h(t)),\]
where the implied constant depends only on the generic fibre $\phi$ and the choice of height $h$.
\end{corollary}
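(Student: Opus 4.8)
The plan is to derive the corollary directly from Theorem~\ref{th:heightdiff} applied over the function field $L(X)$, treating $L(X)$ as a global $A$-field in which every place is finite. First I would observe that, since $L/K$ is (in the relevant situation) of transcendence degree making every place of $L(X)$ a finite place — or more precisely, since we are free to take heights relative to a suitable ground field — the constants $C_1$ and $C_2$ in Theorem~\ref{th:heightdiff} contribute only an $O(1)$, absorbed into the error term. This gives
\[
\hat{h}_{\phi}(x) = h_{L(X)}(x) + O\bigl(h(\phi)\bigr)
\]
for $x \in \phi(L(X))$, where $h(\phi) = h(j_\phi) + \tfrac{1}{|T|_\infty^r - 1} h(a_{r\deg(T)})$ as defined just before Proposition~\ref{prop:northcott}.

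Next I would specialize. For a point $t \in X(L)$ (away from the finitely many bad points), the coefficients of $\phi_t$ are obtained by evaluating the coefficient functions of $\phi$ at $t$; as noted in the sentence preceding the corollary, $j_\phi$ and $a_{r\deg(T)}$ define morphisms $X \to M_{A,r}$ and $X \to \PP^1$ respectively, so by functoriality of heights under morphisms (standard Weil height machinery), $h(j_{\phi_t}) = O(h(t))$ and $h(a_{r\deg(T)}(t)) = O(h(t))$, whence $h(\phi_t) = O(h(t))$, with implied constants depending only on the generic fibre $\phi$ and the chosen height $h$ on $X$. Applying Theorem~\ref{th:heightdiff} now over $L$ (a finite extension of $K$, so the constants depend only on $A$ and $r$) to the specialized module $\phi_t/L$ and the point $x_t$ gives
\[
\hat{h}_{\phi_t}(x) = h(x) + O\bigl(h(\phi_t)\bigr) + O(1) = h(x) + O(h(t)).
\]

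The one point requiring a little care — and the place I expect is the main (mild) obstacle — is reconciling the two roles of Theorem~\ref{th:heightdiff}: once over the transcendental field $L(X)$ to control the generic point, and once over $L$ to control each specialization, making sure the heights are normalized compatibly and that the $O(1)$ terms are genuinely independent of $t$. In particular one should check that the finitely many places of $X$ where the coefficients of $\phi$ have poles are handled (they are excluded from the domain of $t$), and that the error $O(h(t))$ correctly dominates the bounded-height corrections coming from the minimal-model adjustments implicit in passing from $\phi$ to an integral model at each place. None of this is deep — it is the standard heights-in-families bookkeeping — but it is where the proof has to be written carefully. Once that is in place, the stated estimate $\hat{h}_{\phi_t}(x) = h(x) + O(h(t))$ follows immediately.
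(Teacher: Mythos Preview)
Your core argument in the second paragraph—apply Theorem~\ref{th:heightdiff} to the specialized module $\phi_t/L$ to bound $|\hat{h}_{\phi_t}(x)-h(x)|$ by $O(h(\phi_t))+O(1)$, and then use functoriality of heights for the morphisms $j_\phi:X\to M_{A,r}$ and $a_{r\deg(T)}:X\to\PP^1$ to get $h(\phi_t)=O(h(t))$—is correct and is exactly the paper's (implicit, one-line) argument: the corollary is stated without proof immediately after the sentence ``It follows that $h(\phi_t)=O(h(t))$.''

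Your first paragraph, however, is superfluous and reflects a slight misreading of the statement. In the corollary $x$ is a free variable ranging over $\reas{L}$; it is \emph{not} the specialization of some section $x\in\phi(L(X))$. There is therefore nothing to prove at the generic fibre and no reason to invoke Theorem~\ref{th:heightdiff} over $L(X)$. The appearance of ``$x_t$'' later, and the concern about ``reconciling the two roles,'' suggest you are conflating this corollary with the genuinely specialization-theoretic results of Section~\ref{sec:families} (Theorem~\ref{th:variation}), which do track a point along the family. Here only the single fibrewise application of Theorem~\ref{th:heightdiff} is needed; the constants $C_1$, $C_2$ depend only on $A$, $L$, $r$ and are absorbed into the $O(h(t))$ term since $h(t)$ is bounded below on $X(L)$ away from a finite set. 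The bookkeeping you anticipate in your final paragraph simply does not arise.
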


We now define the quantity $\Disc_{\phi/L}$ alluded to in the introduction, which is similar to the minimal discriminant used by Taguchi~\cite{taguchi}.
\begin{definition}
Let $\phi/L$ be a Drinfeld module and, for each $v\in M_L^0$, let $\Disc_{\phi, v}$ be the (local) minimal discriminant defined above.  Then the \emph{minimal discriminant of $\phi$} is the formal $\QQ$-linear combination of elements of $M_L^0$ given by
\[\Disc_{\phi, v}=\sum_{v\in M_L^0}\Disc_{\phi, v}[v].\]
We will define the \emph{discriminant} of $\phi/L$ to be the quantity
\[\Delta_{\phi/L}=\sum_{v\in M_L^0}c_v(\phi)[v].\]
We will say that $\phi$ is a \emph{global minimal model} if $\Delta_{\phi, v}=\Disc_{\phi, v}$, and that $\phi$ is \emph{quasi-minimal} if $\phi$ is a $v$-integral model for every $v\in M_L^0$, and  $\deg(\Delta_{\phi/L})=\sum_{v\in M_L^0} c_v(\phi)$ is minimal within the $L$-isomorphism class of $\phi$.
\end{definition}

Note that, if $B\subseteq L$ is the ring of elements integral at every infinite place, then $\Disc_{\phi/L}$ and $\Delta_{\phi/L}$ can be interpreted as divisors on $\Spec(B)$ with coefficients in $\QQ$, and we will use this terminology.  It is clear from the definition that $\Disc_{\phi/L}\leq \Delta_{\phi/L}$ if $\phi$ is defined over $B$, so minimal models are always quasi-minimal, but the converse might not hold.  Indeed, suppose that $\phi/L$ is a Drinfeld module, and for each $v\in M_L^0$ choose an $\alpha_v\in L_v$ so that $\alpha_v\phi\alpha_v^{-1}$ is a (local) minimal model.  Then $\phi/L$ has a global minimal model if and only if there is a $\beta\in L$ such that $|\beta|_v=|\alpha_v|_v$ for each finite place $v$.  In other words, if we define the \emph{Weierstrass divisor of $\phi/L$} to be the divisor \[\mathfrak{a}_{\phi/L}= \sum_{v\in M_L^0}v(\alpha_v)[v]\] on $\Spec(B)$, then the class of $\mathfrak{a}_{\phi/L}$ is an $L$-isomorphism invariant of $\phi$, and is trivial if and only if $\phi$ admits a global minimal model over $L$.

Although not every Drinfeld module over every field admits a minimal model, we see below that, at least for finite extensions of $K$, quasi-minimal models are close to being minimal.

\begin{proposition}\label{lem:classgp}
For any finite extension $L/K$ and any quasi-minimal $\phi/L$, we have
\[\deg(\Disc_{\phi/L})\leq \deg(\Delta_{\phi/L})\leq \deg(\Disc_{\phi/L})+g(L)+[L:K]-1,\]
where $g(L)$ is the genus of $L$.
\end{proposition}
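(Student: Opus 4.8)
The lower inequality is immediate: a quasi-minimal $\phi$ is $v$-integral at every $v\in M_L^0$, so $\phi$ is itself one of the integral models competing in the definition of $\Disc_{\phi,v}$, whence $\Disc_{\phi,v}\le c_v(\phi)$ at each such place and, summing, $\deg(\Disc_{\phi/L})\le\deg(\Delta_{\phi/L})$. For the upper inequality the plan is to reinterpret the defect $\deg(\Delta_{\phi/L})-\deg(\Disc_{\phi/L})$ as the degree of an effective divisor on the smooth projective model $X_L$ of $L$, and then bound that degree by Riemann--Roch. First I would nail down the local picture: over $L_v$, passing from $\phi$ to the $L_v$-isomorphic model with $T_i$-coefficients $a_{i,j}\alpha^{q^j-1}$ (for $\alpha\in L_v^\times$) replaces $c_v(\phi)$ by $c_v(\phi)+v(\alpha)\deg(v)$, and the new model is integral exactly when $v(\alpha)\ge\mu_v:=\max_{i,j}\{-v(a_{i,j})/(q^j-1)\}$; since $v(\alpha)\in\ZZ$, the minimum of $c_v$ over integral models occurs at $v(\alpha)=\lceil\mu_v\rceil$ (this is the computation already underlying Proposition~\ref{prop:disclocal}). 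As $\phi$ is $v$-integral, $\mu_v\le 0$, so $c_v(\phi)-\Disc_{\phi,v}=-\lceil\mu_v\rceil\deg(v)$ is a non-negative integer multiple of $\deg(v)$, vanishing for all but finitely many $v$. Then $\mathfrak{n}:=\sum_{v\in M_L^0}(-\lceil\mu_v\rceil)[v]$ is an effective integral divisor on $X_L$, supported away from the set $S_\infty$ of places above $\infty$, with $\deg(\mathfrak{n})=\deg(\Delta_{\phi/L})-\deg(\Disc_{\phi/L})$, so it suffices to show $\deg(\mathfrak{n})\le g(L)+[L:K]-1$.

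Next I would translate quasi-minimality into a divisorial condition. For $\beta\in L^\times$, the $L$-isomorphic model of $\phi$ with $T_i$-coefficients $a_{i,j}\beta^{q^j-1}$ is $v$-integral for every $v\in M_L^0$ precisely when $\div(\beta)|_{M_L^0}\ge-\mathfrak{n}$, and for such $\beta$ its discriminant has degree $\deg(\Delta_{\phi/L})+\deg(\div(\beta)|_{M_L^0})$. Since the product formula gives $\deg(\div(\beta)|_{M_L^0})=-\deg(\div(\beta)|_{S_\infty})$, the quasi-minimality of $\phi$ is exactly the statement that no $\beta\in L^\times$ satisfies both $\div(\beta)|_{M_L^0}\ge-\mathfrak{n}$ and $\deg(\div(\beta)|_{S_\infty})>0$.

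Finally I would apply Riemann--Roch. Fix $w_0\in S_\infty$. If $\deg(\mathfrak{n})-\deg(w_0)\ge g(L)$, the Riemann--Roch space of $\mathfrak{n}-[w_0]$ on $X_L$ is nonzero, producing a $\beta\in L^\times$ with $\div(\beta)\ge-\mathfrak{n}+[w_0]$; then all poles of $\beta$ lie in $M_L^0$ and are bounded by $\mathfrak{n}$, $\beta$ has no pole along $S_\infty$, and $\ord_{w_0}(\beta)\ge1$, so $\div(\beta)|_{M_L^0}\ge-\mathfrak{n}$ while $\deg(\div(\beta)|_{S_\infty})\ge\deg(w_0)>0$, contradicting the previous paragraph. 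Hence $\deg(\mathfrak{n})\le g(L)+\deg(w_0)-1$, and choosing $w_0$ with $\deg(w_0)\le\sum_{w\mid\infty}\deg(w)\le[L:K]$ yields $\deg(\mathfrak{n})\le g(L)+[L:K]-1$, as required.

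The only genuinely delicate step will be the first one: one must verify that the infimum defining $\Disc_{\phi,v}$ is realized by conjugation by an element of $L_v^\times$ (so that $\mathfrak{n}$ is an honest integral divisor, not merely a $\QQ$-divisor), and one must keep the normalizations of $\deg(v)$, of the degree of a divisor on $X_L$, and of Riemann--Roch mutually consistent; these are exactly the ingredients that produce $g(L)$ and $[L:K]$ in the final bound. The remaining steps are purely formal, using only the product formula and the basic theory of divisors on $X_L$.
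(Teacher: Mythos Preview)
Your proof is correct and follows essentially the same route as the paper. Your divisor $\mathfrak{n}$ is exactly what the paper calls the Weierstrass divisor $\mathfrak{a}_{\phi/L}$, and both arguments establish the upper bound by showing that quasi-minimality forces the Riemann--Roch space $\mathscr{L}(\mathfrak{n}-[w_0])$ to be trivial for an infinite place $w_0$, then bounding $\deg(w_0)\le[L:K]$. Your concern about $\Disc_{\phi,v}$ being realized by $L_v^\times$-conjugation is not an issue: the paper's definition of integral model already requires $L_v$-isomorphism, so the minimum is taken over $\alpha\in L_v^\times$ and $\mathfrak{n}$ is automatically integral.
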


\begin{proof}
Note that $\Delta_{\phi/L}=\Disc_{\phi/L}+\mathfrak{a}_{\phi/L}$, where $\mathfrak{a}_{\phi/L}$ is the Weierstrass divisor defined above.  The first inequality is obvious, since $\mathfrak{a}_{\phi/L}\geq 0$, and for the second inequality we wish to bound $\deg(\mathfrak{a}_{\phi/L})$ given that $\phi$ is quasi-minimal. 
 Note that if $\beta\in L$ and $\psi\beta = \beta\phi$, then we have $\mathfrak{a}_{\phi/L}=\mathfrak{a}_{\psi/L}+(\beta)_\mathrm{fin}$, where
\[(\beta)_\mathrm{fin}=\sum_{v\in M_L^0}v(\beta)[v],\]
and $\Delta_{\phi/L}=\Delta_{\psi/L}+(\beta)_\mathrm{fin}$.

Let $v\in M_L^\infty$ be any infinite place, and suppose that $\beta\in L\setminus\{0\}$ satisfies $(\beta)\leq\mathfrak{a}_{\phi/L}-[v]$, where $(\beta)$ is the usual divisor associated to $\beta$.  Then $(\beta)_\mathrm{fin}\leq \mathfrak{a}_{\phi/L}$, since the latter divisor is supported on $M_L^0$, and so if $\psi=\beta\phi\beta^{-1}$, we see that \[\Delta_{\psi/L} =\Disc_{\phi/L}+\mathfrak{a}_{\phi/L}-(\beta)_\mathrm{fin}\geq \Disc_{\phi/L}.\]
In particular, $\psi/L$ is an integral model, and $\deg(\Delta_{\phi/L})=\deg(\Delta_{\psi/L})+\deg(\beta)_\mathrm{fin}$.  By the quasi-minimality of $\psi$, we must have $\deg(\beta)_\mathrm{fin}\leq 0$.  But we also have $(\beta)-(\beta)_\mathrm{fin}\leq -[v]$, and so 
\[0=\deg((\beta)_\mathrm{fin})+\deg((\beta)-(\beta)_\mathrm{fin})\leq -\deg(v)<0,\]
which is impossible.

In other words, we have shown that there is no element $\beta\in L\setminus\{0\}$ such that $(\beta^{-1})+\mathfrak{a}_{\phi/L}-[v]\geq 0$, and so the Riemann-Roch space $\mathscr{L}(\mathfrak{a}_{\phi/L}-[v])$ is trivial.  It follows that 
\[\deg(\mathfrak{a}_{\phi/L})\leq g(L)-1+\deg(v)\leq g(L)+[L:K]-1,\]
where $g(L)$ is the genus of $L$.
\end{proof}

%\begin{remark}
%Theorem~\ref{th:zimmer} gives an estimate on $\hat{h}_\phi-h$ in terms of $h(\phi)$, while Theorem~\ref{th:jplaces} gives a lower bound on $\hat{h}_\phi$ in terms of $\max\{h(j_\phi), \deg(\Disc_{\phi/L})\}$.  It is natural to ask how these two quantities are related, although we point out that only the latter is an invariant of the $L$-isomorphism class of $\phi$, so they cannot possibly be the same in general.  On the one hand, if $\phi$ is an integral model, then
%\[\deg(\Disc_{\phi/L})\leq \deg(\Delta_{\phi/L})\leq \sum_{v\in M_L^0}c_v(\phi)\leq \sum_{v\in M_L}\max\{0, c_v(\phi)\},\]
%and so
%\[\max\{h(j_\phi), \deg(\Disc_{\phi/L})\}\leq [L:K]h(\phi).\]
%On the other hand, if $\phi/L$ is a quasi-minimal integral model, then by Lemma~\ref{lem:classgp}, we have
%\[\sum_{v\in M_L^0}\max\{0, c_v(\phi)\}=\sum_{v\in M_L^0}c_v(\phi)=\deg(\Delta_{\phi/L})\leq \deg(\Disc_{\phi/L})+C\]
%for some constant $C$ depending only on $L$.\nts{???}
%\end{remark}

We may now commence with the technical lemmas needed for the proof of Theorem~\ref{th:jplaces}.  
If $b\in A$ and $C\geq 0$, the \emph{$C$-truncated ideal generated by $b$} will be the set
\[I(b, C)=b\cdot\{a\in A:|a|_\infty\leq C\}.\]

\begin{lemma}\label{lem:basinornot}
Fix $x\in \phi(L)$,  $B\geq 1$, $b\in A$, and let $S\subseteq M_L$ be a finite set of places.  Then there exists a $B'\geq 1$ and $b'\in A$ with $I(b', B')\subseteq I(b, B)$  such that for each $v\in S$ we have either
 \begin{enumerate}
 \item for all $a\in I(b', B')$, $|\phi_{T^2a}(x)|_v\leq B_{T, v}$ or 
 \item for all $a\in I(b', B')$, $|\phi_{a}(x)|_v>B_{T, v}$ or $a=0$.
 \end{enumerate}
 Furthermore we may take $B'\geq B^{2^{-\# S}}|T|_\infty^{-2}$
\end{lemma}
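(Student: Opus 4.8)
The plan is to induct on $\#S$, resolving the places of $S$ one at a time, and to arrange the induction so that each place costs passing from a parameter $B$ to roughly $B^{1/2}|T|_\infty^{-1}$ (and replacing $b$ by $bT^{k}$ for some $k\geq 0$). The base case $S=\varnothing$ is immediate with $b'=b$, $B'=B$, since $|T|_\infty\geq 1$ gives $B\geq B^{2^{-0}}|T|_\infty^{-2}$. For the inductive step one first proves a \emph{one-place reduction}: given $x$, $B\geq 1$, $b$, and a single place $v$, there exist $b''\in A$ and $B''\geq B^{1/2}|T|_\infty^{-1}$ with $I(b'',B'')\subseteq I(b,B)$ for which one of the two alternatives holds at $v$. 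Granting this, one picks $v_0\in S$, applies the one-place reduction, and then applies the inductive hypothesis to $S\smallsetminus\{v_0\}$ with the new data; because both alternatives are assertions about \emph{all} elements of the ideal, they persist when the ideal shrinks, so the result works at every $v\in S$. Iterating $B\mapsto B^{1/2}|T|_\infty^{-1}$ a total of $\#S$ times starting from $B$ leaves a value $\geq B^{2^{-\#S}}|T|_\infty^{-(2-2^{1-\#S})}\geq B^{2^{-\#S}}|T|_\infty^{-2}$, which gives the ``furthermore'' clause.

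For the one-place reduction put $y=\phi_b(x)$, so that $\{\phi_{T^{2}a}(x):a\in I(b,B)\}=\{\phi_{T^2}(\phi_c(y)):|c|_\infty\leq B\}$ and $\{\phi_a(x):a\in I(b,B)\}=\{\phi_c(y):|c|_\infty\leq B\}$, and recall the identity $G_{\phi,v}(\phi_c(y))=|c|_\infty^{r}G_{\phi,v}(y)$ of Lemma~\ref{lem:greenspropertiesII}. The crucial local input is that $\{z:|z|_v>B_{T,v}\}$ is mapped into itself by $\phi_a$ for every non-constant $a\in A$: the roots of $\phi_a$ all lie in the filled Julia set of $\phi$ over $\CC_v$, which is contained in the ball $\{|z|_v\leq B_{T,v}\}$ (as in the proof of Lemma~\ref{lem:greenslower}, $|z|_v>B_{T,v}$ already forces $G_{\phi,v}(z)=\log|z|_v-c_v(\phi)>0$), so $|z|_v>B_{T,v}$ places $z$ past every threshold in the Newton polygon of $\phi_a$, where $|\phi_a(z)|_v$ equals the absolute value of its leading term and so again exceeds $B_{T,v}$. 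Two easy cases follow at once. If $G_{\phi,v}(y)=0$, then every $\phi_{T^{2}a}(x)$ with $a\in I(b,B)$ lies in the (completely invariant) filled Julia set, so the first alternative holds for $(b,B)$. And if $|\phi_{b'}(x)|_v>B_{T,v}$ for some $b'$, then $|\phi_{b'c}(x)|_v>B_{T,v}$ for all nonzero $c$, so the second alternative holds for $I(b',B')$ for any $B'$.

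So assume $G_{\phi,v}(y)>0$, i.e.\ $y$ escapes at $v$, and let $k_0\geq 0$ be the least $k$ with $|\phi_{bT^{k}}(x)|_v=|\phi_{T^{k}}(y)|_v>B_{T,v}$. If $B|T|_\infty^{-k_0}\geq B^{1/2}|T|_\infty^{-1}$ (that is, if the escape time $k_0$ is at most about half the number of powers of $T$ available in $I(b,B)$), take $b''=bT^{k_0}$ and $B''=B|T|_\infty^{-k_0}$; then $I(b'',B'')\subseteq I(b,B)$, $\phi_{b''}(x)$ lies in $\{|z|_v>B_{T,v}\}$, and the second alternative holds. Otherwise $y$ escapes slowly: since $\phi_d(x)\in L$ and $v$ has discrete value group, any element of $L$ lying in $\{|z|_v>B_{T,v}\}$ has Green's function bounded below by a constant depending only on $q,r,T$, whereas $\phi_{T^{k_0-1}}(y)$ is escaping and \emph{not} in that region, hence has $G_{\phi,v}$ bounded above by the (finite) supremum of $G_{\phi,v}$ on the escaping part of $\{|z|_v\leq B_{T,v}\}$; these two facts force $G_{\phi,v}(y)$ to be small, small enough that for every $a$ in a sub-ideal $I(b,B'')$ with $B''\geq B^{1/2}|T|_\infty^{-1}$ one has $G_{\phi,v}(\phi_{T^{2}a}(x))\leq \log B_{T,v}-c_v(\phi)$, hence $|\phi_{T^{2}a}(x)|_v\leq B_{T,v}$, so the first alternative holds.

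The main obstacle is precisely the dovetailing of these two sub-cases: one must pin down how many applications of $\phi_T$ (a number bounded in terms of the local behaviour of $\phi$ at $v$, though not uniformly) carry the ``grey'' escaping region into $\{|z|_v>B_{T,v}\}$, verify that this determines a narrow cross-over band of escape times $k_0$ between the two constructions, and check that the band is narrow enough for the clean factor $|T|_\infty^{-1}$ (and, cumulatively over $\#S$ places, $|T|_\infty^{-2}$) still to absorb it in every regime. The remaining ingredients are routine manipulation of the local Green's function, its homogeneity $G_{\phi,v}(\phi_a(x))=|a|_\infty^{r}G_{\phi,v}(x)$, and the elementary inequality $-\log|\alpha+\beta|\geq\min\{-\log|\alpha|,-\log|\beta|\}$ used to pass from subsets back to sub-ideals.
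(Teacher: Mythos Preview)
Your inductive skeleton is exactly the paper's: handle the places of $S$ one at a time, at each step passing from $B$ to $B^{1/2}|T|_\infty^{-1}$, then telescope to get $B'\geq B^{2^{-\#S}}|T|_\infty^{-2}$. The persistence of the alternatives under shrinking the ideal is also correctly noted. Where you diverge from the paper is in the one-place reduction, and this is where the gap you flag as ``the main obstacle'' lies.

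Your one-place argument tracks the escape time $k_0$ of $y=\phi_b(x)$ along the $T$-orbit and then tries to dovetail two cases according to whether $k_0$ is small or large. As you admit, the cross-over requires absorbing a $v$- and $\phi$-dependent constant (governing how long it takes $\phi_T$ to push the ``grey'' escaping region out past $B_{T,v}$) into the uniform factor $|T|_\infty^{-1}$, and you have not shown this can be done; in general it cannot, since that constant genuinely depends on the local data. The paper sidesteps this entirely with a much simpler dichotomy that requires no analysis of Green's functions or escape times: set $C=|T|_\infty^{-1}$ and simply \emph{ask} whether alternative~(i) already holds on the half-size set $I(b,CB^{1/2})$. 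If it does, take $b''=b$, $B''=CB^{1/2}$. If it fails, the failure hands you a witness $a\in I(b,CB^{1/2})$ with $|\phi_{T^2a}(x)|_v>B_{T,v}$; then $b''=T^2a$ and $B''=CB^{1/2}$ give alternative~(ii), because the region $\{|z|_v>B_{T,v}\}$ is forward-invariant (a fact you already noted). The containment $I(T^2a,CB^{1/2})\subseteq I(b,B)$ follows since $|T^2a\cdot c|_\infty\leq |T|_\infty^2\cdot |b|_\infty CB^{1/2}\cdot CB^{1/2}=|b|_\infty B$ when $C=|T|_\infty^{-1}$.

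In short: rather than following the $T$-orbit of $y$ and measuring its escape time, search over \emph{all} of $I(b,CB^{1/2})$ for a counterexample to (i). Either none exists, or the counterexample (multiplied by $T^2$) is your new generator. This replaces your delicate ``dovetailing'' by a tautological either/or, and delivers the clean constant directly.
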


\begin{proof}
Let $b_0=b$, let $B_0=B$, and let $C$ be a constant to be determined later, so that
\[I(b_0, B_0)=I(b, B)=b\cdot \{a\in A:|a|_\infty\leq B\}.\]
Now, order the places $S=\{v_1, v_2, ..., v_{s}\}$, where $s=\# S$.  

Let $i\geq 0$, and suppose that for all $a\in I(b_i, B_i)$ with $|a|_\infty\leq |b_i|_\infty  C B_i^{1/2}$ we have $|\phi_{T^2a}(x)|_v\leq B_{T, v}$.  Then we set $b_{i+1}=b_i$ and $B_{i+1}=C B_i^{1/2}\leq B_{i}$.  Note that $I(b_{i+1}, B_{i+1})\subseteq I(b_i, B_i)$, and for all $a\in I(b_{i+1}, B_{i+1})$ we have $|\phi_{T^2a}(x)|_v\leq B_{T, v}$.

If this is not the case, then there exists an $a\in I(b_i, B_i)$ with $|a|_\infty\leq |b_i|_\infty CB_i^{1/2}$ and $|\phi_{T^2a}(x)|>B_{T, v}$.  Then for all $d\in T^2a A$ we have either $d=0$, or $|\phi_d(x)|_v>B_{T, v}$.  Let $a_{i+1}=T^2a$ and $B_{i+1}=CB_i^{1/2}$.
Note that by construction, for all $d\in I(b_{i+1}, B_{i+1})$, either $d=0$ or else $|\phi_d(x)|_v>B_{T, v}$.  Also, note that $I(b_{i+1}, B_{i+1})\subseteq I(b_i, B_i)$, since $b_i\mid b_{i+1}$ and if $d=b_{i+1} c$, with $|c|_\infty\leq B_{i+1}$, then $d=b_i\cdot \frac{T^2a}{b_i}\cdot c$ with
\[\left|b_i\frac{T^2a}{b_i}c\right|_\infty\leq |b_i|_\infty |T|_\infty^2 C B_i^{1/2}CB_i^{1/2}\leq |b_i|_\infty B_i,\]
as long at $C=|T|_\infty^{-1}$, and so $d\in I(b_i, B_i)$.

Note that in either case, $B_{i+1}=|T|_\infty^{-1} B_i^{1/2}$, and so we have
\[B'=B_s\geq B_0^{2^{-s}}|T|_\infty^{-(1+\frac{1}{2}+\cdots+\frac{1}{2^{s-1}})}\geq B^{2^{-s}}|T|_\infty^{-2}.\]
\end{proof}

We are now in a position to prove Theorem~\ref{th:jplaces}.  We prove the following slightly more general result, and then prove Theorem~\ref{th:jplaces} below as a corollary.
\begin{theorem}\label{th:generaljplaces}
Let $L$ be a global $A$-field, and let $s, r\geq 1$.  Then there are constants $\epsilon>0$ and $C$, and an ideal $\mathfrak{a}\subseteq A$, such that for every Drinfeld module $\phi/L$ of rank $r$ with genuinely bad reduction at at most $s$ places, we have either $x\in \phi[\mathfrak{a}]$ or else
\[\hat{h}_\phi(x)\geq \epsilon\max\{h(j_\phi), \deg(\Disc_{\phi/L})\}-C\]
for each $x\in \phi(L)$.
\end{theorem}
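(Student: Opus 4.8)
The strategy is to estimate $\hat{h}_\phi(x)$ from below by analysing the local heights of a carefully chosen nonzero multiple $\phi_a(x)$ place by place, and then dividing by $|a|_\infty^r=\hat{h}_\phi(\phi_a(x))/\hat{h}_\phi(x)$. Writing $w_v$ for the local weight of $v$ relative to the ground field, for any $y\neq 0$ both $\sum_v w_v\lambda_{\phi,v}(y)$ and $\sum_v w_v G_{\phi,v}(y)$ equal $\hat{h}_\phi(y)$, so $2\hat{h}_\phi(y)=\sum_v w_v(\lambda_{\phi,v}(y)+G_{\phi,v}(y))$ and it suffices to bound each summand below. Partition $M_L$ into the finite places of good reduction (where $j_{\phi,v}=\Disc_{\phi,v}=0$), the finite places of potentially good but bad reduction (where $j_{\phi,v}=0<\Disc_{\phi,v}$), the at most $s$ finite places of genuinely bad reduction (where $j_{\phi,v}>0$), and the finitely many infinite places. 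At a place of good reduction $\lambda_{\phi,v}(y)+G_{\phi,v}(y)\geq 0$ by Lemma~\ref{lem:lambdaprops}, and at a potentially good but bad place Lemma~\ref{lem:potgoodred} gives $\lambda_{\phi,v}(y)+G_{\phi,v}(y)\geq\frac{1}{d-1}\Disc_{\phi,v}$ for \emph{every} $y\in\phi(L)$; this uniformity is exactly what lets us tolerate unboundedly many such places. All the difficulty is concentrated on the finite set $S$ consisting of the genuinely bad places and the infinite places.

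At the places of $S$ we want, for a single well-chosen $a\in A$, an inequality of the form $\lambda_{\phi,v}(\phi_a(x))+G_{\phi,v}(\phi_a(x))\geq c_1 j_{\phi,v}-(\mathrm{err})_v$ with $c_1>0$ depending only on $q$, $r$, and the fixed generators $T_1,\dots,T_m$ of $A$, and with $\sum_v w_v(\mathrm{err})_v$ bounded by a constant. To build $a$: begin with the truncated ideal $I(1,B)$ for a large constant $B$, and apply Lemma~\ref{lem:basinornot} in turn with $T=T_1,\dots,T_m$, each time over the set $S$, producing $b'\in A$ and $B'\geq 1$ with $I(b',B')\subseteq I(1,B)$ such that, for every $i$ and every $v\in S$, either (i) $|\phi_{T_i^2a}(x)|_v\leq B_{T_i,v}$ for all $a\in I(b',B')$, or (ii) $|\phi_a(x)|_v>B_{T_i,v}$ (or $a=0$) for all $a\in I(b',B')$. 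Since $|\cdot|_\infty$ is non-archimedean, $I(b',B')$ is an additive subgroup, so for each pair $(i,v)$ of type (i) we may invoke Lemma~\ref{lem:localchoose} to pass to a subgroup of index at most $q^{4r^2\deg(T_i)^2}$; intersecting over all such pairs yields a subgroup $G\subseteq I(b',B')$ of index at most $q^{N_0}$, $N_0$ a constant, with the property that for any nonzero $a\in G$ and any $v\in S$ the sought inequality holds. Indeed, at $v$ choose $i$ maximizing $j_{\phi_{T_i},v}$ (so $j_{\phi_{T_i},v}=j_{\phi,v}$, automatically at finite places): in case (i) apply Lemma~\ref{lem:localchoose}, and in case (ii) apply Lemma~\ref{lem:lambdaprops}(v) together with Lemma~\ref{lem:greenslower}; the error $(\mathrm{err})_v$ is a constant multiple of $\log^+|T_i^{-1}|_v$, which vanishes at the infinite places and whose sum over the finite places is a constant multiple of $\sum_i h(T_i)$.

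It remains to select a nonzero $a\in G$ with $\phi_a(x)\neq 0$ and $|a|_\infty$ bounded by a constant; this last requirement is essential because we divide by $|a|_\infty^r$. Following the construction in Lemma~\ref{lem:basinornot}, $|b'|_\infty$ is bounded in terms of $B$ and the $T_i$ alone, while $B'$ is at least $B^{2^{-m\#S}}$ times a constant, so taking $B$ large enough (a constant) makes $B'$ exceed any prescribed bound. Write $G=b'\cdot(G/b')$, with $G/b'$ a subgroup of $\{a\in A:|a|_\infty\leq B'\}$ of index at most $q^{N_0}$. Since the $\FF_q$-dimension of $\{a\in A:|a|_\infty\leq q^k\}$ grows linearly in $k$ with intercept controlled by $g(L)$ (Riemann--Roch), $G/b'$ contains an $\FF_q$-subspace, of dimension as large as we please for suitable $k_0$, all of whose elements have $|\cdot|_\infty\leq q^{k_0}$; scaling by $b'$ gives a subspace $W\subseteq G$ all of whose elements satisfy $|\cdot|_\infty\leq M$ for a constant $M$. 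If some nonzero $a\in W$ has $\phi_a(x)\neq 0$, take it. Otherwise $W\subseteq\mathrm{Ann}_A(x)$, and a second Riemann--Roch estimate forces $\mathrm{Ann}_A(x)$ to be an ideal of norm at most $q^{\kappa}$ for a constant $\kappa$; but then $x\in\phi[\mathfrak{a}]$, where $\mathfrak{a}=\bigcap_{\mathrm{N}\mathfrak{n}\leq q^{\kappa}}\mathfrak{n}$ is a fixed nonzero ideal, and there is nothing to prove.

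With $a$ as above, summing the local estimates gives $2\hat{h}_\phi(\phi_a(x))\geq c_1 h(j_\phi)+\frac{1}{d-1}D_{\mathrm{pg}}-C_0$, where $D_{\mathrm{pg}}=\sum w_v\Disc_{\phi,v}$ runs over the potentially good but bad places, $C_0$ is a constant, and we used $h(j_\phi)=\sum_v w_v j_{\phi,v}$ together with the vanishing of $j_{\phi,v}$ off the genuinely bad and infinite places. Proposition~\ref{prop:disclocal} gives $\Disc_{\phi,v}<(1+d)j_{\phi,v}$ at each genuinely bad place, so the genuinely bad part of $\deg(\Disc_{\phi/L})$ is less than $(1+d)h(j_\phi)$ while its potentially good part is $D_{\mathrm{pg}}$, and hence $\max\{h(j_\phi),\deg(\Disc_{\phi/L})\}$ is at most a constant times $2\hat{h}_\phi(\phi_a(x))+C_0$. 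Since $\hat{h}_\phi(\phi_a(x))=|a|_\infty^r\hat{h}_\phi(x)\leq M^r\hat{h}_\phi(x)$, dividing yields the stated bound with $\epsilon$, $C$, and $\mathfrak{a}$ depending only on $A$, $L$, $q$, $r$, and $s$. The main obstacle is precisely the coordination at the genuinely bad places: producing one multiplier $a$ that simultaneously places $\phi_a(x)$ favourably at each of the $\leq s$ genuinely bad places and each infinite place, for every generator $T_i$, while keeping $|a|_\infty$ — and with it the loss factor $|a|_\infty^r$ — bounded by an absolute constant. By comparison the potentially good places cost essentially nothing, since Lemma~\ref{lem:potgoodred} disposes of each of them uniformly and with no choice of multiplier required.
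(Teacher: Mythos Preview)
Your proposal is correct and follows essentially the same route as the paper: partition the places, control the potentially-good contribution uniformly via Lemma~\ref{lem:potgoodred}, then at the finite set $S$ iterate Lemma~\ref{lem:basinornot} over the generators $T_1,\dots,T_m$ and refine with Lemma~\ref{lem:localchoose} to produce a single multiplier $a$ of bounded $|a|_\infty$ for which $\lambda_{\phi,v}(\phi_a(x))+G_{\phi,v}(\phi_a(x))$ dominates $j_{\phi,v}$ at every $v\in S$, and finally sum and divide by $|a|_\infty^r$.

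Two minor remarks. First, your intersection of the subgroups produced by Lemma~\ref{lem:localchoose} (one for each type-(i) pair $(i,v)$) is equivalent to the paper's sequential refinement; either way the index is bounded by a constant depending only on $A$, $r$, $s$. Second, the Riemann--Roch bookkeeping you invoke to manufacture $W\subseteq G$ and then to bound the norm of $\mathrm{Ann}_A(x)$ is more than you need: since $G\subseteq I(b',B')\subseteq I(1,B)$, every nonzero $a\in G$ already satisfies $|a|_\infty\leq B$, and if $\phi_a(x)=0$ for the chosen $a$ one immediately has $x\in\phi[aA]\subseteq\phi[\mathfrak{a}]$ with $\mathfrak{a}$ the lcm of the principal ideals $bA$, $0<|b|_\infty\leq B$. (Also, the genus governing $\dim_{\FF_q}\{a\in A:|a|_\infty\leq q^k\}$ is that of the curve $X$ defining $A$, not $g(L)$, and the error term $\log^+|T_i^{-1}|_v$ need not vanish at every infinite place of a general global $A$-field; what matters, and what you ultimately use, is that its sum over all places is bounded.)
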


\begin{proof}
%\begin{proof}[Proof of Theorem~\ref{th:jplaces}]
Let $x\in \phi(L)$, let $S$ be a set containing all of the places of persistently bad reduction of $\phi/L$, including all infinite places, and fix a set of (non-constant) generators $T_1, ..., T_m$ for $A$ as an $\FF_q$-algebra.  Note that, by Lemma~\ref{lem:lambdaprops} above we have $\lambda_{\phi, v}(x)\geq 0$ for all $v\not\in S$, and all non-zero $x\in \phi(L)$.  By Lemma~3.7 of \cite{juliapaper}, we also see that if $v\in M_L^0$ and $\phi$ has potentially good reduction at $v$, then $j_{\phi, v}=0$.  In other words, for any $\epsilon>0$ we have
\[\lambda_{\phi, v}(x)+G_{\phi, v}(x)\geq 0 = \epsilon j_{\phi, v}\]
for any $v\not\in S$, and for any $x\in \phi(L)$.  On the other hand, by Lemma~\ref{lem:potgoodred} we also have
\[\lambda_{\phi, v}(x)+G_{\phi, v}(x)\geq \frac{1}{d(A, r)-1}\Disc_{\phi, v}\]
for all places of potentially good reduction, and so 
\[\lambda_{\phi, v}(x)+G_{\phi, v}(x)\geq 0 = \epsilon \max\{j_{\phi, v}, \Disc_{\phi, v}\}\]
so long as $\epsilon\leq 1/(d(A, r)-1)$.
  The challenge is to obtain a lower bound similar to this at places $v\in S$.

We will fix some large value $C_m$, to be specified later, which will depend only on $m$, $r$, and $s=\# S$.  Applying Lemma~\ref{lem:basinornot} to $T_1$ and to the set $I(1, C_m)$, we find a subset $I(b_1, C_{m-1})$ such that for all $v\in S$ we have either $|\phi_{T_1^2a}(x)|_v\leq B_{T_1, v}$ for all $a\in I(b_1, C_{m-1})$,  or we have $|\phi_a(x)|_v>B_{T_1, v}$ for all non-zero $a\in I(b_1, C_{m-1})$.  Also, note that $C_{m-1}\geq C_m^{2^{-s}}|T_1|_\infty^{-2}$.

Now, construct $I(b_2, C_{m-2})\subseteq I(B_1, C_{m-1})$ such that
\[C_{m-2}\geq C_{m-1}^{2^{-s}}|T_2|_\infty^{-2}\geq C_m^{2^{-2s}}|T_1|_\infty^{-2^{1-s}}|T_2|_\infty^{-2}\]
and such that for each $v\in S$ we have either $|\phi_{T_2^2a}(x)|_v\leq B_{T_2, v}$ for $a\in I(b_{2}, C_{m-2})$ or we have $|\phi_a(x)|_v> B_{T_2, v}$ for all non-zero $a\in I(b_2, C_{m-2})$.  Proceeding inductively, we find a $b_m\in A$ and $C_0$ such that for each $1\leq i\leq m$ and for each $v\in S$, we have either $|\phi_{T_i^2a}(x)|_v\leq B_{T_i, v}$ for $a\in I(b_m, C_0)$ or we have $|\phi_a(x)|_v> B_{T_i, v}$ for all non-zero $a\in I(b_m, C_0)$.  We will denote by $S_{1, i}$ the set of places at which the first phenomenon occurs, and $S_{2, i}$ the set at which the second does, so that $S=S_{1, i}\cup S_{2, i}$ for any $1\leq i\leq m$.  Note also that $C_0\geq C_m^{2^{-m s}} C'$, for some constant $C'$ depending only on $A$ and $s=\# S$.  Note that $Y_{1, 0}= I(b_m, C_0)$ is an additive subgroup of $A$, and we can ensure that $Y_{1, 0}$ contains at least $2\cdot q^{-4s(\deg(T_1)+\deg(T_2)+\cdots+\deg(T_m)}$ elements simply by choosing $C_m$ larger than some value which depends only on $A$, $r$, and $s$. 

Now, for each $1\leq i\leq m$, enumerate the places in $S_{1, i}=\{v_1, v_2, ..., v_{k_i}\}$, and set $Y_{i, 0}=Y_{i-1, k_{i-1}}$ (where $Y_{1, 0}$ is as defined above).
We may apply Lemma~\ref{lem:localchoose}, for each $1\leq t\leq k_i$ to obtain an additive subgroup $Y_{i, t}\subseteq Y_{i, t-1}$ such that $\# Y_{t, i}\geq q^{-4r^2\deg(T_i)} \# Y_{i, t-1}$, and such that for all $a\in Y_{i, t}$ we have $\phi_a(x)=0$ or
\[-\log|\phi_a(x)|_{v_t}+c_{v_t}(\phi)\geq (1-q^{-1})j_{\phi_{T_i}, v_t}-\frac{1}{q(q^{r\deg(T_i)}-1)^2}\log^+|T_i^{-1}|.\]
To recap, we have an additive subgroup $Y_{i, k_i}\subseteq A$, with \[\#Y_{i, k_i}\geq q^{-4k_ir^2\deg(T_i)}\# Y_{i, 0}\geq q^{-4s\deg(T_i)}\# Y_{i, 0},\] such that for all $v\in S_{1, i}$ and $a\in Y_{i, k_i}$, either $\phi_a(x)=0$ or else
\begin{gather*}
\lambda_{\phi, v}(\phi_a(x)) \geq (1-q^{-1})j_{\phi_{T_i}, v}-\frac{1}{q(q^{r\deg(T_i)}-1)^2}\log^+|T_i^{-1}|\\
G_{\phi, v}(\phi_a(x))\geq 0.
\end{gather*}
But it is true, by Lemma~\ref{lem:greenslower}, that for all $v\in S_{2, i}$ and $a\in Y_{i, k_i}$, we have $a=0$ or 
\begin{gather*}
\lambda_{\phi, v}(\phi_a(x))=0\\
G_{\phi, v}(\phi_a(x))\geq \left(\frac{q-1}{q^{r\deg(T_i)}-1}\right)j_{\phi_{T_i}, v}.
\end{gather*}
In particular, for all $v\in S$ and all $a\in Y_{m, k_m}$, we get $\phi_a(x)=0$ or 
\[\lambda_{\phi, v}(\phi_a(x))+G_{\phi, v}(\phi_a(x))\geq \left(\frac{q-1}{q^{r\deg(T_i)}-1}\right)j_{\phi_{T_i}, v}-\frac{1}{q(q^{r\deg(T_i)}-1)^2}\log^+|T_i^{-1}|\]
for all $1\leq i\leq m$, and so
\begin{equation}\label{eq:boundwithj}\lambda_{\phi, v}(\phi_a(x))+G_{\phi, v}(\phi_a(x))\geq \left(\frac{q-1}{q^{R}-1}\right)j_{\phi, v}-\delta_v,\end{equation}
where $R=r\max_{1\leq i\leq m}\deg(T_i)$ and where \[\delta_v=\max_{1\leq i\leq m}\frac{1}{q(q^{r\deg(T_i)}-1)^2}\log^+|T_i^{-1}|.\] 
Note that, by Proposition~\ref{prop:disclocal}, we have
\[j_{\phi, v}>\frac{1}{d+1}\max\{j_{\phi, v}, \Disc_{\phi, v}\}\]
when $j_{\phi, v}>0$, and so \eqref{eq:boundwithj} is no weaker than
\[\lambda_{\phi, v}(\phi_a(x))+G_{\phi, v}(\phi_a(x))\geq \left(\frac{q-1}{q^{R}-1}\right)\left(\frac{1}{d+1}\right)\max\{j_{\phi, v}, \Disc_{\phi, v}\}-\delta_v.\]

We may now choose $C_m$ large enough, in a way which depends only on $A$ and $\#S$, so that $\# Y_{m, k_m}\geq 2$.
If we let $a\in Y_{m, k_m}$ be non-zero, then summing over all places gives
\begin{eqnarray*}
|a|_\infty^r\hat{h}_\phi(x)&=&\hat{h}_\phi(\phi_a(x))\\
&=&\sum_{v\in M_L}\frac{1}{2}\left(\lambda_{\phi, v}(\phi_a(x))+G_{\phi, v}(\phi_a(x))\right)\\
&\geq&\sum_{v\in M_L}\frac{1}{2}\left(\left(\frac{q-1}{q^{R}-1}\right)\left(\frac{1}{d+1}\right)\max\{j_{\phi, v}, \Disc_{\phi, v}\}-\delta_v\right)\\
&\geq&\frac{1}{2}\left(\frac{q-1}{q^{R}-1}\right)\left(\frac{1}{d+1}\right)\max\{h(j_\phi), \deg(\Disc_{\phi/L})\}-C,
\end{eqnarray*}
unless $\phi_a(x)=0$, where
\[C\leq \frac{1}{2}\sum_{v\in M_L}\frac{1}{q(q^r-1)^2}\log^+|T_i^{-1}|\leq \frac{[L:K]}{q(q^r-1)^2}\sum_{1\leq i\leq m}\deg(T_i).\]
Since $a\in Y_{m, k_m}\subseteq I(1, C_m)$, we have that $|a|_\infty\leq C_m$, and the latter quantity is chosen depending only on $\#S$ and the ring $A$, and so we have the claimed inequality.  If, on the other hand, we have $\phi_a(x)=0$, where $a\in Y_{m, k_m}\setminus\{0\}$, then $x\in \phi[aA]\subseteq\phi[\mathfrak{a}]$, where $\mathfrak{a}$ is the least common multiple of all ideal of the form $bA$ with $|b|_\infty\leq C_m$.  Since $C_m$ depends only on $\#S$ and the ring $A$, so does the ideal $\mathfrak{a}$.
\end{proof}

Theorem~\ref{th:jplaces} is a corollary of Theorem~\ref{th:generaljplaces}, once the following lemma is observed.
\begin{lemma}\label{lem:lowernorthcott}
Every $L$-isomorphism class of Drinfeld modules contains an integral model $\phi/L$ such that
\begin{equation}\label{eq:twoheights}h(\phi)\leq 2\max\{h(j_\phi), \deg(\Disc_{\phi/L})\} +g(L)+[L:K]-1,\end{equation}
where $g(L)$ is the genus of $L$.
In particular, for any $B$, there are at most finitely many $L$-isomorphism classes of Drinfeld modules $\phi/L$ of rank $r$ satisfying
\[\max\{h(j_\phi), \deg(\Disc_{\phi/L})\}\leq B.\]
\end{lemma}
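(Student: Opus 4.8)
The second assertion follows from the first together with Proposition~\ref{prop:northcott}: if every $L$-isomorphism class of rank $r$ Drinfeld modules contains a model $\phi$ satisfying \eqref{eq:twoheights}, then each class with $\max\{h(j_\phi),\deg(\Disc_{\phi/L})\}\le B$ has a representative with $h(\phi)\le 2B+g(L)+[L:K]-1$, and Proposition~\ref{prop:northcott}, applied with the degree bound $D=1$, shows that there are only finitely many such $\phi/L$. So the whole content is in the first assertion, and the plan is to exhibit one conveniently chosen model in each isomorphism class and to bound $h(\phi)$ by summing local contributions.

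Recall that, by the definition of $h(\phi)$,
\[
h(\phi)=h(j_\phi)+\sum_{v\in M_L}\max\{0,c_v(\phi)\}.
\]
If $\phi$ is integral at every finite place then $c_v(\phi)\ge 0$ for $v\in M_L^0$, so the finite places contribute precisely $\sum_{v\in M_L^0}c_v(\phi)=\deg(\Delta_{\phi/L})$. The first step is therefore to choose $\phi$ quasi-minimal in its $L$-isomorphism class; this is possible because $\deg(\Delta_{\phi/L})$ takes values in a discrete set of non-negative rationals as $\phi$ runs over integral models, bounded below by the isomorphism invariant $\deg(\Disc_{\phi/L})$, so the minimum is attained. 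For such a $\phi$, Proposition~\ref{lem:classgp} gives $\deg(\Delta_{\phi/L})\le\deg(\Disc_{\phi/L})+g(L)+[L:K]-1$.

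The second step is to make the infinite places contribute nothing. Since the $c_v(\phi)$ are, up to a single positive scalar, the local heights of a leading coefficient of some $\phi_T$, the product formula gives $\sum_{v\in M_L}c_v(\phi)=0$, hence $\sum_{v\in M_L^\infty}c_v(\phi)=-\deg(\Delta_{\phi/L})\le 0$. Replacing $\phi$ by $\beta\phi\beta^{-1}$ for a unit $\beta$ of the ring $B$ (the integral closure of $A$ in $L$, whose closed points are the finite places) leaves every finite place untouched, and in particular preserves integrality, $\deg(\Delta_{\phi/L})$, and quasi-minimality, while translating the vector $(c_v(\phi))_{v\in M_L^\infty}$ by $(\ord_v(\beta)\deg v)_{v\in M_L^\infty}$; these translation vectors fill out a full-rank lattice in the trace-zero hyperplane $\{\sum_v x_v=0\}$. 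Using this freedom I would arrange that $c_v(\phi)\le 0$ for every infinite place $v$ (when $\infty$ has a unique place in $L$, as when $L=K$, this is automatic, since then $c_{v_\infty}(\phi)=-\deg(\Delta_{\phi/L})\le 0$). With such a model, $\sum_{v\in M_L}\max\{0,c_v(\phi)\}=\deg(\Delta_{\phi/L})$, so
\[
h(\phi)=h(j_\phi)+\deg(\Delta_{\phi/L})\le h(j_\phi)+\deg(\Disc_{\phi/L})+g(L)+[L:K]-1,
\]
and since $h(j_\phi)+\deg(\Disc_{\phi/L})\le 2\max\{h(j_\phi),\deg(\Disc_{\phi/L})\}$ this is \eqref{eq:twoheights}.

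The delicate point is the second step when $L$ has several infinite places: the region of the trace-zero hyperplane where $c_v(\phi)\le 0$ for all $v$ is a bounded polytope, and a priori the unit lattice of $B$ need not meet it. I expect this to be handled by a Riemann--Roch argument parallel to the one proving Proposition~\ref{lem:classgp}, now permitting the infinite places to take up the slack; failing that, a single unit twist makes the infinite-place contribution at most a constant depending only on $L$, which already suffices for the finiteness statement. Everything else is routine manipulation of the definitions of $h(\phi)$, $\Delta_{\phi/L}$ and $\Disc_{\phi/L}$ together with the product formula.
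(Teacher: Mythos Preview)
Your reduction of the finiteness statement to the inequality via Proposition~\ref{prop:northcott} is fine, and your overall plan---bound the finite contribution by $\deg(\Disc_{\phi/L})+g(L)+[L:K]-1$ and the infinite contribution by zero---is the right shape. But the gap you flag in your second step is real and not resolved by the unit-twist idea. As you note, the region of the trace-zero hyperplane where every infinite $c_v(\phi)\le 0$ is a bounded polytope, and the unit lattice of $B$ need not meet it; your fallback (``a constant depending only on $L$'') would give the finiteness corollary but not the stated inequality.

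The paper avoids this by collapsing your two steps into one. Rather than first taking $\phi$ quasi-minimal (which only looks at finite places) and then trying to fix the infinite places, the paper minimizes $h(\phi)$ itself over the $L$-isomorphism class; this quantity already incorporates $\max\{0,c_v(\phi)\}$ at \emph{all} places. For such a minimizer, either $c_v(\phi)\ge 0$ at every infinite place (forcing all $c_v(\phi)=0$ by the product formula, and the bound is trivial), or there is an infinite place $w$ with $c_w(\phi)<0$. In the latter case one sets
\[
D=\sum_{v\in M_L}\frac{\max\{0,c_v(\phi)\}}{\deg(v)}[v]-\Disc_{\phi/L}\ge 0
\]
and shows, exactly as in Proposition~\ref{lem:classgp}, that any nonzero $\beta$ with $(\beta)\le D-[w]$ would produce an integral $\psi=\beta\phi\beta^{-1}$ with $h(\psi)<h(\phi)$, contradicting minimality. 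Hence $\mathscr{L}(D-[w])=0$, so $\deg(D)\le g(L)+\deg(w)-1\le g(L)+[L:K]-1$, and the inequality follows from $h(\phi)=h(j_\phi)+\deg(\Disc_{\phi/L})+\deg(D)$. This is precisely the ``Riemann--Roch argument parallel to Proposition~\ref{lem:classgp}'' you anticipated, but applied directly to the $h(\phi)$-minimizer rather than bolted onto a quasi-minimal model; that choice of minimization is what makes the single Riemann--Roch step cover both finite and infinite places at once.
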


\begin{proof}
The proof is similar to that of Proposition~\ref{lem:classgp}, but slightly different as the definition of quasi-minimality takes into account the value of $c_v(\phi)$ only at finite places, whereas the definition of $h(\phi)$ involves infinite places, as well.

Suppose that $\phi/L$ is an integral model such that $h(\phi)$ is minimal within the $L$-isomorphism class of $\phi$, and let
\[D=\sum_{v\in M_L}\frac{\max\{0, c_v(\phi)\}}{\deg(v)}[v]-\Disc_{\phi/L}\geq 0.\]
Since $\phi$ is an integral model, $c_v(\phi)\geq 0$ for all $v\in M_L^0$.
Notice, though, that if $c_v(\phi)\geq 0$ for all $v\in M_L^\infty$, then $c_v(\phi)=0$ for all $v\in M_L$, by the product formula, hence $h(\phi)=h(j_\phi)$ and inequality~\eqref{eq:twoheights} is trivial.  There is no loss of generality, then, if we suppose that there is some $w\in M_L^\infty$ such that $c_w(\phi)<0$. 

Now suppose that there exists a $\beta\in L\setminus\{0\}$ satisfying $(\beta)\leq D-[w]$, and let $\psi=\beta\phi\beta^{-1}$.  For every finite place $v$, we have
\[c_v(\psi)=c_v(\phi)-v(\beta)\deg(v)\geq \Disc_{\phi, v},\]
and so $\psi$ is $v$-integral.  We also have, for any $v\in M_L$,
\[\max\{0, c_v(\psi)\}+v(\beta)\deg(v)=\max\{v(\beta)\deg(v), c_v(\phi)\}\leq \max\{0, c_v(\phi)\}.\]
For the place $w$, we have the strict inequality
\[\max\{0, c_w(\psi)\}+w(\beta)\deg(w)=\max\{w(\beta)\deg(w), c_w(\phi)\}<0= \max\{0, c_w(\phi)\},\]
since both $c_w(\phi)$ and $w(\beta)$ are negative.
By the product rule, we have
\begin{eqnarray*}
h(\psi)&=&h(j_\psi)+\sum_{v\in M_L}\max\{0, c_v(\psi)\}\\
&=&h(j_\phi)+\sum_{v\in M_L}\left(\max\{0, c_v(\psi)\}+v(\beta)\deg(v)\right)\\
&<&h(j_\phi)+\sum_{v\in M_L}\max\{0, c_v(\phi)\}=h(\phi),
\end{eqnarray*}
a contradiction.

We have shown, then, that if $\phi/L$ is an integral model such that $h(\phi)$ is minimal with the $L$-isomorphism class, then the Riemann-Roch space $\mathscr{L}(D-[w])$ is trivial, and hence $\deg(D-[w])\leq g(L)-1$.  From this we conclude that
\begin{eqnarray*}
h(\phi)&=&h(j_\phi)+\sum_{v\in M_L}\max\{0, c_v(\phi)\}\\
&=&h(j_\phi)+\deg(\Disc_{\phi/L})+\deg(D)\\
&\leq& 2\max\{h(j_\phi), \deg(\Disc_{\phi/L})\}+g(L)+\deg(w)-1\\
&\leq& 2\max\{h(j_\phi), \deg(\Disc_{\phi/L})\}+g(L)+[L:K]-1.
\end{eqnarray*}

For the second claim, that there are only finitely many $L$-isomorphism classes satisfying
\[\max\{h(j_\phi), \deg(\Disc_{\phi/L})\}\leq B,\]
we may now simply invoke Proposition~\ref{prop:northcott}.

\end{proof}

\begin{proof}[Proof of Theorem~\ref{th:jplaces}]
Under the hypotheses of Theorem~\ref{th:jplaces}, we have by Theorem~\ref{th:generaljplaces} constants $\epsilon$ and $C$ and an ideal $\mathfrak{a}\subseteq A$ such that for every $\phi/L$ of rank $r$ with at most $s$ places of bad reduction, we have
have either $x\in \phi[\mathfrak{a}]$ or else
\[\hat{h}_\phi(x)\geq \epsilon\max\{h(j_\phi), \deg(\Disc_{\phi/L})\}-C,\]
for each $x\in \phi(L)$.
The latter case implies
\[\hat{h}_\phi(x)\geq \frac{\epsilon}{2}\max\{h(j_\phi), \deg(\Disc_{\phi/L})\},\]
unless $\phi/L$ satisfies $\max\{h(j_\phi), \deg(\Disc_{\phi/L})\}\leq 2C/\epsilon$.  By Lemma~\ref{lem:lowernorthcott}, there are only finitely many $L$-isomorphism classes of $\phi/L$ satisfying this inequality.

Now, for each of these finitely many $L$-isomorphism classes, since the annihilator of the torsion submodule of $\phi(L)$ and the minimum positive value of $\hat{h}_\phi$ on $\phi(L)$ are both $L$-isomorphism invariants, we may adjust $\epsilon$ downward and $\mathfrak{a}$ upward (in the sense of divisibility) so that it still true that
\[\hat{h}_\phi(x)\geq \frac{\epsilon}{2}\max\{h(j_\phi), \deg(\Disc_{\phi/L})\},\]
unless $x\in \phi[\mathfrak{a}]$.  This proves Theorem~\ref{th:jplaces}.
\end{proof}

We conclude this section with a proof of Theorem~\ref{th:sharpness}.
\begin{proof}[Proof of Theorem~\ref{th:sharpness}]
Note that both the minimal positive value of $\hat{h}_\phi(x)$, as $x\in\phi(L)$ varies over non-torsion points, and the quantity $\max\{h(j_\phi), \deg(\Disc_{\phi/L})\}$ are $L$-isomorphism invariants.  In particular, there is no loss of generality if we prove Theorem~\ref{th:sharpness} only in the case where $\phi/L$ is such that $h(\phi)\leq h(\psi)$ for any $\psi/L$ which is $L$-isomorphic to $\phi$.

Now, it follows from the assumption of uniform boundedness of torsion that there is a constant $B$, depending only on $L$ and $r$, such that every Drinfeld module $\phi/L$ of rank $r$ has a non-torsion point $x\in\phi(L)$ satisfying $h(x)\leq B$.  Indeed, if we assume that $\#\phi^{\mathrm{Tors}}(L)\leq N$ for every $\phi/L$ of rank $r$, then it suffices to choose $B$ large enough that $L$ contains more than $N$ elements of height at most $B$.  For quasi-minimal $\phi/L$, it follows from this, Theorem~\ref{th:zimmer}, and Lemma~\ref{lem:lowernorthcott} that there exists a non-torsion $x\in\phi(L)$ such that
\[\hat{h}_\phi(x)\leq h(x)+h(\phi)+O(1)\leq h(\phi)+O(1)\leq 2\max\{h(j_\phi), \deg(\Disc_{\phi/L})\} +O(1),\]
where the implied constants depend only on $A$, $r$, and $L$.
\end{proof}

%%%%%%%%%%%%%%%%%%%%%%%%%%%%%%%%%%%%%%%%%%%%%%%%%
%%%%%%%%%%%%%%%%%%%%%%%%%%%%%%%%%%%%%%%%%%%%%%%%%
%%%%%%%%%%%%%%%%%%%%%%%%%%%%%%%%%%%%%%%%%%%%%%%%%
%%%%%%%%%%%%%%%%%%%%%%%%%%%%%%%%%%%%%%%%%%%%%%%%%
%%%%%%%%%%%%%%%%%%%%%%%%%%%%%%%%%%%%%%%%%%%%%%%%%
%%%%%%%%%%%%%%%%%%%%%%%%%%%%%%%%%%%%%%%%%%%%%%%%%

\section{Drinfeld modules in families}
\label{sec:families}

In this section we turn our attention to the proof of Theorem~\ref{th:fams}.  The following fact is a modification of a result in \cite{variation}, and the proof is nearly identical.

\begin{theorem}\label{th:variation}
Let $L$ be a global $A$-field, let $X/\reas{L}$ be a curve, let $\phi/\reas{L}(X)$ be a Drinfeld $A$-module, and let $x\in \phi(\reas{L}(X))$.  Then there exists a divisor $D\in \Pic(X)\otimes\QQ$ such that for all $\beta\in X(\reas{L})$,
\[\hat{h}_{\phi_\beta}(x_\beta)=h_D(\beta)+O(1).\]
Furthermore, $\deg(D)=\hat{h}_\phi(x)=0$ only if $x\in \phi^{\mathrm{Tors}}(\reas{L})$, or $\phi$ is isotrivial.
\end{theorem}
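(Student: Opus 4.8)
The plan is to run the Tate--Call--Silverman telescoping argument of \cite{variation}, carried out at the level of divisors on $X$, and only then to specialize. Fix once and for all a non-constant $T\in A$ and write $d=|T|_\infty^r=\deg(\phi_T)\geq 2$. Viewing $x$, and more generally each $x^{(n)}:=\phi_{T^n}(x)\in\reas{L}(X)$, as a rational function on $X$, let $D_n=(x^{(n)})_\infty\in\Div(X)$ be its polar divisor, so that $D_n$ is the pullback of a point under the rational map $X\to\PP^1$ induced by $x^{(n)}$; by functoriality of Weil heights, $h(x^{(n)}_\beta)=h_{D_n}(\beta)+O_n(1)$ for all $\beta\in X(\reas{L})$ outside a finite set, and $\deg D_n=\deg(x^{(n)})_\infty$ is exactly the naive height of $x^{(n)}$ over the global $A$-field $\reas{L}(X)$. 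The first step is to show that $d^{-n}D_n$ converges in $\Div(X)\otimes\QQ$; its limit $D$ (or rather its class in $\Pic(X)\otimes\QQ$) will be the divisor sought. The second step is to deduce the formula for $\hat{h}_{\phi_\beta}(x_\beta)$ by combining this convergence with the explicit comparison between $\hat{h}_\phi$ and the naive height from Theorem~\ref{th:heightdiff}.

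For the convergence I would analyse how a polar divisor propagates under the single map $\phi_T$. Writing $\phi_T(y)=Ty+a_1y^q+\cdots+a_Ry^{q^R}$ with $R=r\deg(T)$ and $a_i\in\reas{L}(X)$, a Newton-polygon computation of valuations shows: (i) at any point $P$ at which all of $a_1,\dots,a_R$ and $x^{(0)}$ are regular, every $x^{(n)}$ is regular; hence $\operatorname{supp}(D_n)$ is contained for all $n$ in the fixed finite set $S$ consisting of $\operatorname{supp}(D_0)$ together with the poles of the $a_i$; and (ii) at a point $P\in S$ the order $\ord_P(x^{(n)})$ either stays bounded in $n$ or eventually obeys the affine recursion $m\mapsto d\,m+\ord_P(a_R)$. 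In either case $d^{-n}\ord_P(x^{(n)})$ converges to an explicit rational number, so $d^{-n}D_n\to D$ for an effective $D\in\Div(X)\otimes\QQ$ supported on $S$; moreover $E_n:=D_n-d^nD$ has support in $S$ and coefficients bounded independently of $n$, so $E_n$ ranges over a \emph{finite} set of divisors. Counting degrees, $\deg D=\lim_n d^{-n}\deg D_n=\lim_n d^{-n}h_{\reas{L}(X)}(x^{(n)})$; and since $\hat{h}_\phi(x^{(n)})=d^n\hat{h}_\phi(x)$ while Theorem~\ref{th:heightdiff}, applied over $\reas{L}(X)$, bounds $|h_{\reas{L}(X)}(x^{(n)})-\hat h_\phi(x^{(n)})|$ uniformly in $n$, the right-hand limit equals $\hat{h}_\phi(x)$, giving $\deg D=\hat{h}_\phi(x)$.

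To obtain $\hat h_{\phi_\beta}(x_\beta)=h_D(\beta)+O(1)$, fix $\beta\in X(\reas{L})$ avoiding the finite set of parameters at which some $x^{(n)}$ or a coefficient of $\phi$ is undefined. Theorem~\ref{th:heightdiff} over $\reas{L}$, together with $h(\phi_\beta)=O(h(\beta))$ (which holds because $j_\phi$ and the leading coefficient of $\phi_T$ define morphisms out of $X$), gives $|h(x^{(n)}_\beta)-d^n\hat h_{\phi_\beta}(x_\beta)|=O(h(\beta))$ with implied constant independent of $n$ and $\beta$. On the other hand $h(x^{(n)}_\beta)=h_{D_n}(\beta)+O_n(1)=d^nh_D(\beta)+h_{E_n}(\beta)+O_n(1)$, with $h_{E_n}(\beta)=O(h(\beta))$ uniformly in $n$ since the $E_n$ lie in a finite set. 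Dividing by $d^n$ and letting $n\to\infty$ kills the terms $d^{-n}O(h(\beta))$ and $d^{-n}O_n(1)$, and the standard Weil-height-machine argument shows that $\beta\mapsto d^{-n}h(x^{(n)}_\beta)$ converges, uniformly up to $O(1)$, to a Weil height attached to $D$; this yields the asserted identity, the finitely many excluded $\beta$ being absorbed into the $O(1)$.

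For the last clause: if $\deg D=\hat h_\phi(x)=0$ then $D=0$, being effective of degree $0$, and since $\hat h_\phi=\sum_w n_wG_{\phi,w}$ with every $G_{\phi,w}\geq 0$ we get $G_{\phi,w}(x)=0$ at every place $w$ of $\reas{L}(X)$; by Theorem~\ref{th:heightdiff} the orbit $\{x^{(n)}\}$ then has bounded naive height in $\reas{L}(X)$, and the pole analysis above (equivalently, a Northcott property modulo the constant field $\reas{L}$) forces either $x$ to be a torsion point of $\phi$ or $\phi$ to be isotrivial, exactly as in \cite{variation}. I expect the main obstacle to be not any single step but the uniformity bookkeeping in the Tate limit --- making the $O(1)$ in $\hat h_{\phi_\beta}(x_\beta)=h_D(\beta)+O(1)$ genuinely independent of $\beta$ --- which is precisely where the facts that $E_n$ lies in a finite set and that Theorem~\ref{th:heightdiff} is uniform do the real work; the remainder then follows \cite{variation} essentially verbatim.
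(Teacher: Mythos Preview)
Your approach is the same as the paper's in outline: the paper does not give a self-contained argument but defers to \cite{variation} for the construction of $D$ and the $O(1)$ comparison, and separately to Ghioca~\cite{ghioca1} (and Baker~\cite{baker}) for the final clause.  Your Tate-limit construction $D=\lim_n d^{-n}(x^{(n)})_\infty$ over the function field $\reas{L}(X)$, with the Newton-polygon pole bookkeeping and the identification $\deg D=\hat h_\phi(x)$, is the right one.

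There are, however, two genuine gaps.  First, the uniformity of the $O(1)$: the facts you isolate ($E_n$ ranges over a finite set of divisors, Theorem~\ref{th:heightdiff} is uniform) only yield
\[
\bigl|\hat h_{\phi_\beta}(x_\beta)-d^{-n}h(x^{(n)}_\beta)\bigr|\le d^{-n}O(h(\beta)),\qquad
\bigl|d^{-n}h(x^{(n)}_\beta)-h_D(\beta)\bigr|\le d^{-n}O(h(\beta))+O_n(1),
\]
and neither letting $n\to\infty$ nor summing the telescoping series removes the $O(h(\beta))$.  This is exactly the gap between the Call--Silverman estimate and the stronger $O(1)$ result.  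The mechanism in \cite{variation} is not global divisor bookkeeping but a \emph{local} argument: for each place $v$ of $L$ one shows that $\beta\mapsto G_{\phi_\beta,v}(x_\beta)$ is, up to a bounded error, a Weil function for a piece of $D$ (this is what the paper means when it singles out ``local analyticity of local heights'' as the one place where the characteristic enters), and then one sums over $v$.  Your finite-$E_n$ observation is true and useful, but it does not by itself upgrade $O(h(\beta))$ to $O(1)$.

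Second, for the final clause your appeal to a ``Northcott property modulo the constant field $\reas L$'' is not available: $\reas L$ is infinite, so bounded height in $\reas L(X)$ gives no finiteness, and confining the orbit to a fixed finite-dimensional Riemann--Roch space over $\reas L$ is not by itself enough to force torsion.  The implication ``$\hat h_\phi(x)=0$ over a function field with infinite constant field $\Rightarrow$ $x$ torsion or $\phi$ isotrivial'' is a nontrivial theorem, and the paper explicitly attributes it to Ghioca~\cite{ghioca1} (see also Baker~\cite{baker}) rather than to \cite{variation}.
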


As noted, the proof of this result is essentially contained in \cite{variation}.  Note that the the arguments in \cite{variation} assume the context of a number field, but it is only in proving the local analyticity of local heights that the characteristic of the field enters into the argument.  The fact that $\deg(D)=\hat{h}_\phi(x)$ follows directly from the construction in \cite{variation}, and the fact that this vanishes only if $x\in\phi^{\mathrm{Tors}}(\reas{L})$, or $\phi$ is isotrivial follows from a result of Ghioca \cite{ghioca1} (see also Baker~\cite{baker}).

\begin{proof}[Proof of Theorem~\ref{th:fams}]
By the main result of \cite{variation}, there is an effective divisor $D$ on $C$ of degree $\hat{h}_\phi(x)$ such that
\[\hat{h}_{\phi_\beta}(x_\beta)=h_D(\beta)+O(1),\]
and  $D$ is ample as long as $\phi$ is not isotrivial.  
The set of points at which the left-hand side vanishes, then, is a set of bounded height (with respect to the ample divisor $D$).

On the other hand, let $E$ be the pole divisor of $j_\phi$, so that $\phi_\beta$ has bad reduction only at places such that $\beta$ is not integral with respect to $E$.  If we fix this number of places, we have for any degree one height $h$ on $C$,
\[\epsilon \deg(j_\phi)h(t)=\epsilon h(j_{\phi_t})\leq \hat{h}_{\phi_t}(y)+\kappa,\]
for all $y\in L$, unless $y\in \phi_\beta^{\mathrm{Tors}}(L)$.
So if $y\in \Gamma_{\phi_\beta}(x_\beta, L)$, then either $y\in\phi_\beta^{\mathrm{Tors}}(L)$ or
 $\phi_{\beta, b}(y)=x_\beta$, for some $b\in A$.  Then we have
\[\epsilon \deg(j_\phi)h(t)\leq\hat{h}_{\phi_\beta}(y)+\kappa=|b|_\infty^{-r}\hat{h}_{\phi_\beta}(x_\beta)+O(1)=|b|_\infty^{-r}h_D(\beta)+O(1).\]
Dividing by $h(t)$, we have
\[|b|_\infty^r\leq \frac{\deg(D)}{\epsilon\deg(j_\phi)}+o(1),\]
where $o(1)\to 0$ as $h(t)\to\infty$.  This gives us only finitely many choices for $b$, once we exclude parameters coming from some set of bounded height.

So we've shown that if $y\in \Gamma_{\phi_\beta}(x_\beta, L)$, then either $\phi_{\beta, b}(y)\in \langle x_\beta\rangle$ for some  $b$ of degree bounded in terms of the number of places at which $\beta$ is not integral with respect to $E$, or else $y\in\phi_{\beta}^{\mathrm{Tors}}(L)$.  But the latter is bounded in size by the number of places at which $\beta$ is not integral with respect to $E$, and so the index $(\Gamma_{\phi_\beta}(x_\beta, L):\langle x_\beta \rangle)$ is similarly bounded.
\end{proof}

%%%%%%%%%%%%%%%%%%%%%%%%%%%%%%%%%%%%%%%%%
%%%%%%%%%%%%%%%%%%%%%%%%%%%%%%%%%%%%%%%%%
%%%%%%%%%%%%%%%%%%%%%%%%%%%%%%%%%%%%%%%%%
%%%%%%%%%%%%%%%%%%%%%%%%%%%%%%%%%%%%%%%%%
%%%%%%%%%%%%%%%%%%%%%%%%%%%%%%%%%%%%%%%%%
%%%%%%%%%%%%%%%%%%%%%%%%%%%%%%%%%%%%%%%%%
%%%%%%%%%%%%%%%%%%%%%%%%%%%%%%%%%%%%%%%%%
%%%%%%%%%%%%%%%%%%%%%%%%%%%%%%%%%%%%%%%%%
%%%%%%%%%%%%%%%%%%%%%%%%%%%%%%%%%%%%%%%%%
%%%%%%%%%%%%%%%%%%%%%%%%%%%%%%%%%%%%%%%%%
%%%%%%%%%%%%%%%%%%%%%%%%%%%%%%%%%%%%%%%%%
%%%%%%%%%%%%%%%%%%%%%%%%%%%%%%%%%%%%%%%%%
%%%%%%%%%%%%%%%%%%%%%%%%%%%%%%%%%%%%%%%%%
%%%%%%%%%%%%%%%%%%%%%%%%%%%%%%%%%%%%%%%%%
%%%%%%%%%%%%%%%%%%%%%%%%%%%%%%%%%%%%%%%%%
%%%%%%%%%%%%%%%%%%%%%%%%%%%%%%%%%%%%%%%%%

\end{document}